\numberwithin{theorem}{section}
\numberwithin{lemma}{section}
\numberwithin{remark}{section}
\numberwithin{equation}{section}
\def\bfx{{\bf x}}
\def\bfe{{\bf e}}
\def\nu{n}
\def\vb{{\bf v}}
\def\d{{\mathrm d}}
\def\R{{\mathbb R}}
\def\ud{\underline{D}}
\def\md{\partial_{t}^\bullet}
\def\ehm{{\hat{e}_h^m}}
\def\eM{{e_h^{m+1}}}
\def\ehM{{\hat{e}_h^{m+1}}}
\def\Gm{{\Gamma^m}}
\def\Ghm{{\Gamma^m_h}}
\def\GhM{{\Gamma^{m+1}_h}}
\def\Ghsm{{\hat\Gamma^m_{h, *}}}
\def\GhsM{{\hat\Gamma^{m+1}_{h, *}}}
\def\Ghso{{\Gamma_{h,\rm f}^0}}
\def\nhm{{n^m_h}}
\def\nhsm{{\hat n^m_{h, *}}}
\def\nbhsm{{\bar n^m_{h, *}}}
\def\nbhsM{{\bar n^{m+1}_{h, *}}}
\def\nsm{{n^m_{*}}}
\def\nsM{{n^{m+1}_{*}}}
\def\Thsm{{\hat T^m_{h, *}}}
\def\Tbhsm{{\bar T^m_{h, *}}}
\def\Tsm{{T^m_{*}}}
\def\TsM{{T^{m+1}_{*}}}
\def\Nhsm{{\hat N^m_{h, *}}}
\def\Nsm{{N^m_{*}}}
\def\Nbhsm{{\bar N^m_{h, *}}}
\def\Tbhm{{\bar{T}^m_h}}
\def\Nbhm{{\bar{N}^m_h}}
\def\nbhm{{\bar{n}^m_h}}
\def\km{{\kappa_m}}
\def\ksm{{\kappa_{*,m}}}
\def\kl{{\kappa_l}}
\def\ksl{{\kappa_{*,l}}}
\numberwithin{equation}{section}
\begin{document}
\title{Convergence analysis for the Barrett--Garcke--N\"urnberg method of transport type for evolving curves}
\titlerunning{\,}        

\author{Genming Bai \and Harald Garcke \and \\ Shravan Veerapaneni 
}

\authorrunning{\,} 

\institute{G. Bai and S. Veerapaneni \at
	Department of Mathematics, University of Michigan, Ann Arbor, USA. 
	Email address: gbai@umich.edu, shravan@umich.edu\\
	\and
	H. Garcke \at
	Fakultät für Mathematik, Universit\"at Regensburg, Regensburg, Germany. 
	Email address:
	harald.garcke@ur.de
}

\date{Received: date / Accepted: date}

\allowdisplaybreaks

\maketitle
%

\begin{abstract}\noindent
{ 
In this paper, we propose a Barrett--Garcke--Nürnberg (BGN) method for evolving curves under a prescribed background velocity field in $\R^2$ and present the corresponding convergence analysis. 
Unlike mean curvature flow and surface diffusion, where the evolution velocities inherently exhibit parabolicity, this case is dominated by transport which poses a significant difficulty in establishing convergence proofs. To address the challenges imposed by this transport-dominant nature, we derive several discrete energy estimates of the transport type on discretized polynomial curves within the framework of the projection error. The use of the projection error is indispensable as it provides crucial additional stability through its orthogonality structure. We prove that the proposed method converges sub-optimally in the $L^2$ norm, and this is the first convergence proof for a fully discrete numerical method solving the evolution of curves driven by general flows.
\\
\keywords{transport equation, velocity flow, parametric finite element method, tangential motion, stability, convergence, trajectory, mass lumping, distance projection} 
}
\end{abstract} 

\setlength\abovedisplayskip{3.5pt}
\setlength\belowdisplayskip{3.5pt}

\section{Introduction}\label{sec:intro}

 In this paper, we focus on the stability and convergence behavior of the BGN method in the scenarios where flows are dominated by transport. To be more precise, we are interested in the case where a closed curve $\Gamma(t)$ in $\R^2$ is evolving under an arbitrarily prescribed background velocity field $u(x,t)$ in $\R^2\times [0, T]$. We denote the parameterized flow map along $u$ by $X(t):\Gamma(0)\rightarrow \Gamma(t)$, which satisfies the velocity equation
\begin{align}\label{eq:evol}
	\partial_t X(\cdot, t) = u(X(\cdot, t), t)  \mbox{ on } \Gamma(0) \mbox{ for } t \in  [0,T],
\end{align}
with the initial condition $X(x, 0) = x$ for $x\in \Gamma(0)$.

The evolution equation \eqref{eq:evol} plays a crucial role in numerous applications, especially the sharp interface models, where the evolution of the interface is determined by a nonlinear coupling of bulk and interface dynamics. This includes problems of solving: the evolution of a surface or bulk domain with a moving boundary using the arbitrary Lagrangian–Eulerian (ALE) methods \cite{ER15, ER21, NF99, LS21, LXY23, RWX23}; moving interface problem driven by curvature quantities \cite{Fu20, Hysing09, BGN15a, BGN15b, BGN16, DLY22}; PDE-constrained shape optimization \cite{GLR24}; fluid-structure interactions \cite{Richter17}.
Therefore, developing robust and convergent discretization methods for \eqref{eq:evol} is highly desirable.

The parametric finite element approach to geometric flows was first introduced in Dziuk's seminal paper \cite{Dziuk1990a} and has been further developed over the years \cite{Deckelnick2005, Dziuk2013b, KLL19, Dem09, BGN2007JCP, BGN2008JCP}. The idea of the parametric finite element method is to use vector-valued finite element functions to track the graph of a surface within its ambient geometry. Since we are primarily concerned about the graph rather than the trajectory, there is an additional degree of freedom to choose the tangential velocity. By choosing a suitable tangential velocity, the finite element mesh can maintain high quality during long-term simulations. Such tangential smoothing velocities can be constructed by minimizing the deformation rate functional $\int_\Gamma |\nabla_\Gamma v|^2$ (see \cite{BGN2007JCP, BGN2008JCP, BL24, BGN_survey}), minimizing the deformation functional $\int_\Gamma |\nabla_\Gamma X|^2$ (see \cite{DL24}), and reparametrization \cite{Elliott-Fritz-2017, Bartels13, KMS24, MMNI16, MRI21, TRM18, VRBZ11}.
Improvements in nodal distribution can also be achieved by prescribing the tangential velocity \cite{ES12, EV15, Morigi10} or by considering the equilibrium of a spring model \cite{Kovacs19}.

The parametric finite element methods of the Barrett--Garcke--N\"urnberg (BGN) type have been successful in approximating the evolution of curves and surfaces under various geometric flows, including the flows of parabolic type, e.g. mean curvature flow \cite{BGN2008JCP} and surface diffusion \cite{BGN2007JCP}, and as well as the flows of transport type including two-phase flow for bubbles \cite{BGN15a, BGN15b, DLY22} and biomembranes \cite{BGN16}. 
A key feature of the BGN method is its implicitly defined tangential velocity, which helps maintain an equal distribution of mesh nodes. It has been rigorously shown in \cite{BL24} that the BGN velocity for curve shortening flow converges to the minimizer of the deformation rate functional $\int_\Gamma |\nabla_\Gamma v|^2$.

For the discretization of \eqref{eq:evol}, we propose the following fully discrete BGN system of transport type:  
Given a approximate polynomial curve $\Gamma_h^{m}$ at time level $t=t_m$, find a polynomial parametrization $X^{m+1}_h : \Gamma_h^{m}\rightarrow \R^2$ and a scalar finite element function $\eta_h^{m+1} : \Gamma_h^{m}\rightarrow \R$ satisfying the following weak formulation for all $(\chi_h,\phi_h)\in S_h(\Gamma_h^m)\times S_h(\Gamma_h^m)^2$: 
\begin{align}
	\int_{\Gamma_h^m}^h \frac{X_h^{m+1}-{\rm id}}{\tau} \cdot \bar n_h^m \chi_h
	&=
	\int_{\Gamma_h^m}^h u(t_m)|_{\Ghm} \cdot \bar n_h^m  \chi_h,
	\label{eq:BGN_tr1}\\
	\int_{\Gamma_h^m} \nabla_\Ghm X_h^{m+1} \cdot  \nabla_\Ghm \phi_h
	&=
	\int_{\Gamma_h^m}^h \eta_h^{m+1} \nbhm \cdot \phi_h
	\notag\\
	&\quad+
	\int_{\Gamma_h^m} \nabla_{\Gamma_h^m} {\rm id} \cdot  \nabla_{\Gamma_h^m} I_h[\phi_h - (\phi_h \cdot \bar n_h^m)\bar n_h^m],
	\label{eq:BGN_tr2}
\end{align}
where the superscript $h$ denotes the mass lumping integral (see \eqref{eq:lumped_mass} for the definition), $\nbhm$ is the averaged normal vector on $\Ghm$ (see \eqref{eq:bar_n} for the definition), and $S_h(\Gamma_h^m)$ is a space of scalar-valued finite element functions on $\Ghm$. These concepts will be defined in detail in Section \ref{section:results} later. Please also refer to Appendix \eqref{sec:disc-norm}--\eqref{sec:super1} for their basic properties. The second term on the right-hand-side of \eqref{eq:BGN_tr2} serves as the source of stabilization inspired by \cite[Eq. (1.5)]{BL24}. This term is crucial when deriving the tangential stability estimates in Section \ref{sec:tan_stab}. It is straightforward to show this additional term vanishes approximately:
\begin{align}
	\int_{\Gamma_h^m} \nabla_{\Gamma_h^m} {\rm id} \cdot  \nabla_{\Gamma_h^m} I_h[\phi_h - (\phi_h \cdot \bar n_h^m)\bar n_h^m]
	&\approx
	-\int_{\Gamma_h^m} \Delta_{\Gamma_h^m} {\rm id} \cdot  I_h[\phi_h - (\phi_h \cdot \bar n_h^m)\bar n_h^m] \notag\\
	&=
	\int_{\Gamma_h^m} H_h^m n_h^m \cdot  I_h[\phi_h - (\phi_h \cdot \bar n_h^m)\bar n_h^m]
	\approx
	0
	 \notag,
\end{align}
thus ensuring its consistency, where $H_h^m$ and $n_h^m$ are the mean curvature and unit normal vector on $\Ghm$ and we have used the identity $-\Delta_{\Ghm} {\rm id} = H_h^m n_h^m$ (see \cite[Eq. (11.9)]{Gia13}).
This stabilization term is helpful when we test \eqref{eq:BGN_tr2} with some almost tangential test function $I_h(I - \nbhm\otimes\nbhm)\phi_h \in S_h(\Ghm)^2$ to get
\begin{align}
	\int_{\Gamma_h^m} \nabla_\Ghm (X_h^{m+1} - X_h^m) \cdot  \nabla_\Ghm I_h(I - \nbhm\otimes\nbhm)\phi_h
	=
	0
	\qquad \phi_h \in S_h(\Ghm)^2
	 . \notag
\end{align}
Taking $\phi_h = X_h^{m+1} - X_h^m$,
the continuous analogue of the identity above writes
\begin{align}
	\int_{\Gamma} \nabla_\Gamma v \cdot  \nabla_\Gamma [(I - n\otimes n)v]
	=
	0
	, \notag
\end{align}
which furthermore implies the crucial tangential stability estimate (cf. the derivations of \cite[Eqs. (1.11)--(1.14)]{BL24}):
\begin{align}
	\int_{\Gamma} | \nabla_\Gamma [(I - n\otimes n)v] |^2
	\leq
	C
	\int_\Gamma |v\cdot n|^2
	. \notag
\end{align} 

The primary challenge in analyzing \eqref{eq:BGN_tr1}--\eqref{eq:BGN_tr2} lies in the absence of the $H^1$-positive definite bilinear form, i.e. $\int_\Ghm \nabla_\Ghm X_h^{m+1} \cdot \nabla_\Ghm \phi_h$, on the left-hand-side of \eqref{eq:BGN_tr1}. This is in sharp contrast to Dziuk's method \cite[Eq. (7)]{Dziuk1990a}, as well as the BGN method for mean curvature flow \cite[Eq. (2.24)]{BGN2008JCP} and surface diffusion \cite[Eqs. (2.2a), (2.2b)]{BGN2007JCP}. This loss of discrete $H^1$ parabolicity is a result of the transport-dominant nature of the underlying flow \eqref{eq:evol}. Nevertheless, the absence of the stiffness bilinear form in \eqref{eq:BGN_tr1} saves us from using the inverse inequality to control gradients of errors.
Additionally, by employing the projection error framework (cf. \cite[Section 3]{BL24FOCM}), we gain extra stability through the use of super-approximation estimates.
In summary, at the discrete level, several competing factors influence both stability and instability:
\begin{itemize}
	\item Instability: The lack of $H^1$ parabolicity at the continuous level prevents us from controlling $L_t^2 H_x^1$ norms in the energy estimates. As a result, it is essential to ensure that each component of the error remains $L_t^\infty L_x^2$ stable.
	This is particularly challenging because the errors associated to normal vectors (see Lemma \ref{lemma:ud}, Item 7) and surface discrepancies (see Lemma \ref{lemma:ud}, Item 6) involve gradients. In Section \ref{sec:refined}, we will eliminate this gradient dependence by utilizing the orthogonality of the projection error.
	\item Instability: There is no guarantee of the a priori boundedness of the shape regularity constants.
	To address this, in Section \ref{sec:bbd}, we carefully track the leading-order norm dependence of the parameterization map and apply a Gr\"onwall-type argument. 
	\item Stability: Fewer uses of the inverse inequality lead to an improvement in the velocity estimates (cf. Section \ref{sec:bbd_vel}).
	\item Stability: Improved stability estimates are achieved through the orthogonality structure within the framework of projection error (cf. Section \ref{sec:refined}). 
	Particularly, we uncover a crucial local integration-by-parts formula for the surface distortion factor
	\begin{align}\label{eq:NT-div}
		\nabla_\Gamma\cdot f
		=
		\nabla_\Gamma\cdot (N f)
		+
		\nabla_\Gamma\cdot (T f)
		=
		(\nabla_\Gamma\cdot N) f
		\qquad\mbox{for $Nf=f,f\in H^1(\Gamma)^2$},
	\end{align}
	where $N$ and $T$ are the normal and tangential projections on $\Gamma$. When $f$ is chosen to be the projection error, which is almost normal by construction, \eqref{eq:NT-div} helps us regain $L^2$-stability in the error equation.
\end{itemize}
In the paper, after careful analysis, we are able to show the instability does not overwhelm the stability, making it possible to get a convergence proof.
Another notable finding is that, although the framework of projection error was initially introduced to recover the $H^1$ parabolicity structure (cf. \cite[Section 5.2]{BL24FOCM}), it also proves highly effective for addressing transport equations (cf. \eqref{eq:NT-div} and Section \ref{sec:refined}). This highlights that projection error remains a canonical notion of error for evolving curves and surfaces dominated by transport.

Another important contribution of the paper is a high-order tangential stability estimate (cf. Lemma \ref{lemma:tan_stab_H2}), proved by using an intrinsic $H^2$ stability result of the discrete Laplacian (cf. Lemma \ref{lemma:H2norm}). This result plays an important role in the induction argument of the shape regularity (cf. Section \ref{sec:bbd}) for the critical finite element degree $k=3$.

Along the convergence proof, we also show that the transport BGN velocity $(X_h^{m+1} - X_h^m)/\tau$ is consistent to the following elliptic velocity system on the exact curve $\Gamma$:
\begin{align}\label{eq:ell}
	\begin{split}
		v\cdot n&=  u\cdot n, \\
		-\Delta_\Gamma v&=\lambda n ,
	\end{split}
\end{align}
which is the Euler-Lagrange equation of the following minimization problem:
$$
\min_{v\in H^1(\Gamma)} \int_{\Gamma}|\nabla_\Gamma v|^2 
\quad\mbox{under the pointwise constraint $v\cdot n = u\cdot n$},
$$
confirming the intrinsic tangential smoothing effect.

To the best of our knowledge, our proof is the first to show the convergence of a fully discrete scheme for transport equations, standing out for the significance of our results. 
Meanwhile, this is the first convergence result for a BGN-type system -- a notable advancement beyond the one-line BGN scheme for curve shortening flow analyzed in \cite{BL24}.
Moreover, the proposed method \eqref{eq:BGN_tr1}--\eqref{eq:BGN_tr2} only relies on pointwise evaluations of the velocity field $u$, making our method robust in the low-regularity regime.
The results developed in this paper can hopefully be applied to moving interface problems driven by mean curvature \cite{BGN15a, BGN15b, DLY22} and surface diffusion \cite{BGN16,VRBZ11}.
Exploring how to extend these results to higher dimensions will also be a key focus of future research.

Regarding other convergence results of semidiscrete and fully discrete parametric finite element methods, we refer readers to \cite{DeckelnickDziuk, Deckelnick-Dziuk-2009,Li-2020-SINUM,Ye-Cui-SINUM, JSZ23} for curve shortening flow with $k=1$; \cite{BL24,Li-2020-SINUM} for curve shortening flow with $k\geq 2$; \cite{BL24FOCM,KLL19,BL22A,Li21} for mean curvature flow with $k\geq 2, 3$; \cite{KLL-Willmore} for Willmore flow with $k\geq 2$, and \cite{Bao-Zhao-2021-SINUM, Bao-Jiang-Wang-Zhao-2017, JL21, DLY22, BGN_survey} for unconditional stability results.

The rest of this paper is organized as follows: Section \ref{section:results} presents the proposed numerical scheme and the main convergence theorem. Section \ref{section:preparation} introduces the preliminaries, including notations, basic approximation results, the induction hypothesis, and geometric relations. Section \ref{sec:stab-est} and \ref{Proof-THM-1} provide the primary stability and error estimates for the proposed transport BGN method respectively. Numerical examples are presented in Section \ref{section:numerical}. We have moved some well-known results concerning parametric finite element methods and the projection error to Appendices \ref{sec:surf_calc} through \ref{sec:super}, and the proof of Lemma \ref{lemma:NT_stab_ref} and \ref{lemma:e-convert} are provided in Appendix \ref{sec:appndix_tan_stab} and \ref{sec:e-convert} respectively.

\section{Numerical scheme and the main theorem}\label{section:results}


We begin by introducing several standard concepts related to the parametric finite element method (cf. \cite{Dem09}). Let $\Ghm$ be a closed curve that is globally continuous and can be parameterized piecewise by polynomial functions. The curve $\Ghm$ serves as an approximation to the smooth curve $\Gamma^m:=\Gamma(t_m)$, which evolves under a prescribed velocity field $u$. Each curved element $K$ of $\Ghm$ is the image of a curved element $K^0\subset\Gamma_h^0$ under the discrete flow map. We denote by $K_{\rm f}^0$ the unique flat segment sharing endpoints with $K^0$ and by $F_{K}:K_{\rm f}^0\rightarrow K$ the parametrization of $K$. Here, $F_K$ is the unique polynomial of degree $k$ that maps $K_{\rm f}^0$ onto $K$. The finite element space on the approximate curve $\Ghm$ is defined through the push-forward map $F_K$ as follows:
$$
S_h(\Gamma_h^m) = \{v_h\in C(\Gamma_h^m): v_h\circ F_K \in \mathbb{P}^k(K_{\rm f}^0)^2 \,\,\mbox{for every element}\,\, K\subset\Gamma_h^m\} ,
$$
where $\mathbb{P}^k(K_{\rm f}^0)$ denotes the space of polynomials of degree $k\ge 1$ on the flat segment $K_{\rm f}^0$.

For any piecewise continuous function $f\in C_{\rm pw}(\Ghm)$, we define its high-order mass lumping integral as follows, indicated by the superscript $h$, which assists in handling nodal-wise operations in the analysis:
\begin{align}\label{eq:lumped_mass}
	\int_{\Gamma_h^m}^h f := \sum\limits_{K\subset\Gamma_h^m} \int_{K_{\rm f}^0} I_h^{GL} \big[(f \circ F_K) |\nabla_{K_{\rm f}^0} F_K|\big] ,
\end{align}
where the summation includes all elements of the curve $\Ghm$, and $I_h^{GL}$ represents the interpolation operator at the Gauss--Lobatto points of the flat element $K_{\rm f}^0$ (cf. \cite[Eq. (10.2.3)]{Brenner08}). 
When the finite element degree $k=1$, the definition in \eqref{eq:lumped_mass} coincides with the definition in \cite[Eq. (2.2)]{BGN2008JCP}. 
In the rest of the paper, we will use the notation $I_h = I_h^{GL}$.

The averaged normal vector $\bar n_h^m \in S_h(\Ghm)^2$ is defined as the discrete $L^2$ projection of the piecewise continuous unit normal vector $n_h^m$ onto the finite element space $S_h(\Ghm)^2$, i.e., 
\begin{align}\label{eq:bar_n}
	\int_\Ghm^h \bar n_h^m\cdot \phi_h = \int_\Ghm^h n_h^m\cdot \phi_h 
	\quad\forall\, \phi_h\in S_h(\Ghm)^2 . 
\end{align}

Now we are in a good position to state the proposed numerical scheme for \eqref{eq:evol}.
Consider the sequence of grid points in time $t_m=m\tau$, where $m=0,1,\dots, [T/\tau]$, with a step size $\tau>0$, and $[T/\tau]$ is the greatest integer not exceeding $T/\tau$. We propose a transport-type BGN method as follows: Given a prescribed background velocity $u$ in $\R^2\times [0, T]$ and a polynomial curve $\Gamma_h^{m}$ at time level $t=t_m$ whose parameterization map is $X^{m}_h$, find the polynomial parameterization $X^{m+1}_h : \Gamma_h^{m}\rightarrow \R^2$ and a scalar finite element function $\eta_h^{m+1}: \Gamma_h^{m}\rightarrow \R$ for the next time level, satisfying the following weak formulation for all $(\chi_h,\phi_h)\in S_h(\Gamma_h^m)\times S_h(\Gamma_h^m)^2$:
\begin{align}
	\int_{\Gamma_h^m}^h \frac{X_h^{m+1}-{\rm id}}{\tau} \cdot \bar n_h^m \chi_h
	&=
	\int_{\Gamma_h^m}^h u(t_m)|_{\Ghm} \cdot \bar n_h^m  \chi_h,
	\label{eq:BGN_tr11}\\
	\int_{\Gamma_h^m} \nabla_\Ghm X_h^{m+1} \cdot  \nabla_\Ghm \phi_h
	&=
	\int_{\Gamma_h^m}^h \eta_h^{m+1} \nbhm \cdot \phi_h
	\notag\\
	&\quad+
	\int_{\Gamma_h^m} \nabla_{\Gamma_h^m} {\rm id} \cdot  \nabla_{\Gamma_h^m} I_h[\phi_h - (\phi_h \cdot \bar n_h^m)\bar n_h^m] .
	\label{eq:BGN_tr22}
\end{align}
\begin{remark}\upshape
	Note that the mass lumping integral is applied only to the $L^2$ inner product terms, and not to the stiffness and stabilization terms. This formulation follows the original BGN schemes \cite{BGN2007JCP,BGN2008JCP}. From an analytical perspective, employing mass lumping in the $L^2$ inner product terms is essential, as the cancellation structure $f_h g_h=0$ at all nodes $\mathcal N(\Ghm)$ frequently occurs in the analysis. This, in turn, yields
	\begin{align}
		\int_\Ghm^h f_h g_h = 0 . \notag
	\end{align}
	 On the other hand, for the stiffness terms, the super-approximation estimate does not hold. As a result, we can only establish 
	\begin{align}
		\bigg|\bigg(\int_\Ghm^h - \int_\Ghm \bigg)
		 \nabla_\Ghm f_h \cdot \nabla_\Ghm g_h \bigg| \leq
		 C_\Ghm
		  \| f_h \|_{H^1(\Ghm)} \| g_h \|_{H^1(\Ghm)} , \notag
	\end{align}
	which leads to the failure of Lemma \ref{lemma:NT_stab_ref} if the stiffness term on the left-hand side of \eqref{eq:NT_stab} is treated with the mass lumping integral. 
\end{remark}

Let $\delta>0$ be a sufficiently small constant such that every point $x$ in the $\delta$-neighborhood of the exact curve $\Gamma^m=\Gamma(t_m)$, denoted by $D_\delta(\Gamma^m)=\{x\in\R^2: {\rm dist}(x,\Gamma^m)\le \delta\}$, 
has a unique smooth projection of distance retraction onto $\Gamma^m$, denoted by $a^m(x)$, satisfying the following relation: 
\begin{align*}
	x - a^m(x) = \pm |x-a^m(x)| n^m(a^m(x)) ,
\end{align*}
where $n^m$ is the unit normal vector on $\Gamma^m$. It is known that such a constant $\delta$ exists and only depends on the curvature of $\Gamma^m$ (thus $\delta$ is independent of $m$, but possibly dependent on $T$); see \cite[Lemma~14.17]{GT2001} and \cite[Theorem 6.40]{Lee18}. 

We assume at $t=0$ that the polynomial parametrization map $F_{K^0}:K_{\rm f}^0\rightarrow K^0 \subset\Gamma_h^0$ has the following shape regularity property: 
\begin{align}\label{P0}
& \max_{K^0\subset\Gamma_h^0} 
\Big(\|F_{K^0}\|_{W^{k,\infty}(K_{\rm f}^0)} + \|\nabla_{K^0} F_{K^0}^{-1}\|_{L^\infty(K^0)} \Big) 
\le \kappa_0 ,
\end{align}
where $\kappa_0$ is some constant that is independent of $h$. This property holds for standard parametric finite elements which interpolate the smooth curve $\Gamma^0$ and guarantees the following optimal-order approximation to $\Gamma^0$ by $\Gamma_h^0$ (cf. \cite[Section 2.3]{Dem09}): 
\begin{align}\label{P1}
& \max_{K^0\subset\Gamma_h^0}  \| a^0 \circ F_{K^0} - F_{K^0} \|_{L^{\infty}(K_{\rm f}^0)} 
\le Ch^{k+1} .
\end{align}
The projection $a^0(x)$ is well defined for points $x$ in a neighborhood of $\Gamma^0$ and therefore well defined on $\Gamma_h^0$ for sufficiently small mesh size $h$. 

Then we define the nodal projection.
Let $x_j^m$, $j=1,\ldots,J$, be the nodes of the approximate curve $\Gamma_h^m$ at the time $t_m$ given by the transport BGN method in \eqref{eq:BGN_tr11}--\eqref{eq:BGN_tr22}. 
Following the framework of the projection error \cite{BL24FOCM}, we use ``hat $\string^$" to denote the quantities which are related to the nodal-wise projection. 
The projected piecewise polynomial curve $\Ghsm$ is uniquely determined by the projected nodes $\{a^m(x_j^m)\}_{j=1}^J$.
The error estimate \eqref{eq:err_est_1} ensures the projection $a^m(x_j^m)$ is well defined if the stepsize and mesh size are sufficiently small. Similar to $S_h(\Ghm)$, the finite element function space on $\Ghsm$, denoted by $S_h(\Ghsm)$, can be canonically defined in a parametric way via a push-forward polynomial map from $\Gamma_{h,\rm f}^0 := \cup_{K^0\subset\Gamma_h^0} K_{\rm f}^0$ to $\Ghsm$.

Following the notations in \cite[Section 1]{BL24FOCM}, we will always identify a finite element function by a vector of its nodal values. Such representation is unique if we have specified the underlying domain. For example, the two integrands of $$\int_{\Ghsm} \phi_h \quad\mbox{and}\quad \int_{\Ghm} \phi_h$$ have the same vector of nodal values, denoted by $\vb$, but are defined on different domains $\Ghsm$ and $\Ghm$. When the underlying domain is specified, $\vb$ is automatically substantialized to a finite element function $\phi_h$ on that domain. Since all of the quantitative computations in this paper involve either integrals or norms, our notations for finite element functions will always have a unique and clear meaning. For another example, $\| \phi_h \|_{\Ghsm}$ and $\| \phi_h \|_{\Ghm}$ denote the norms of a finite element function (a nodal vector) on the two different curves $\Ghsm$ and $\Ghm$, respectively.


We then define the finite element error function
$$ \hat e_h^{m}  \in H^1(\Ghsm) $$
which is uniquely determined by the nodal error $\{x_j^m - a^m(x_j^m)\}_{j=1}^J$. 
The error estimate for $\hat e_h^{m}$ is given in the following main theorem.

\begin{theorem}[Convergence of the transport BGN method]\label{thm:main}
	We assume that the flow map $ X: \Gamma^0\times[0,T]\rightarrow \R^2$ generated by \eqref{eq:evol} of a closed curve and its inverse map $X(\cdot,t)^{-1}:\Gamma(t)\rightarrow\Gamma^0$ are both sufficiently smooth, and the initial polynomial $\Gamma_h^0$ satisfies the approximation properties \eqref{P0}--\eqref{P1}. Let $\{X_h^m\}_{m=0}^{ [T/\tau] }$ be the finite element solutions given by the transport BGN method in \eqref{eq:BGN_tr11}--\eqref{eq:BGN_tr22}, subject to the initial condition $X_h^0 = {\rm id}$ on $\Gamma_h^0$. Then, for any given constant $c$ (independent of $\tau$ and $h$), there exists a positive constant $h_0$ such that for $\tau \le c h^{k}$ and $h\le h_0$ the following error estimate holds for finite elements of degree $k\ge 3${\rm:} 
	\begin{align}
		\label{eq:err_est_1}
		&\max_{1\le m\le [T/\tau]}
		\| \hat e_h^{m} \|_{L^2(\hat\Gamma_{h,*}^m)}
		\le C(\tau + h^{k}) , 
	\end{align}
	where the constant $C$ is independent of $\tau$ and $h$ {\rm(}but may depend on $c$ and $T${\rm)}.
\end{theorem}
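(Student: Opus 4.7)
The plan is to prove \eqref{eq:err_est_1} by induction on the time index $m$: assuming the stated error bound holds for $j=0,1,\dots,m$, I deduce it at $j=m+1$. The induction hypothesis carries with it not only the $L^2$ projection-error estimate but also a bootstrap control of the shape regularity constants of the parametrizations $F_{K^j}$, so that the approximation and super-approximation properties of $S_h(\Gamma_h^j)$ listed in the Appendices remain uniformly available. Throughout, the central quantity is the projection error $\hat e_h^j\in H^1(\hat\Gamma_{h,*}^j)$, which by construction lies almost along the normal direction of the exact curve $\Gamma^j$; this orthogonality will be exploited repeatedly.

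The first substantive step is to derive two coupled error equations by subtracting a consistent version of \eqref{eq:BGN_tr11}--\eqref{eq:BGN_tr22} evaluated on $\hat\Gamma_{h,*}^m$ from the actual discrete scheme on $\Gamma_h^m$. This produces a normal-velocity error equation from \eqref{eq:BGN_tr11} and a curvature/elliptic error equation from \eqref{eq:BGN_tr22}, together with consistency remainders arising from the time discretization, from the domain commutator between $\Gamma_h^m$ and $\hat\Gamma_{h,*}^m$, and from the mass-lumping interpolation $I_h$. Testing the normal equation with $\chi_h$ proportional to the normal trace of $\hat e_h^{m+1}$ yields a telescoping identity that controls the normal component of the discrete velocity error in $L^\infty_t L^2_x$. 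The tangential component of the velocity error and the discrete curvature $\kappa_h^{m+1}$, which are not directly controlled by the first equation, are then handled by the high-order tangential stability estimate of Lemma \ref{lemma:tan_stab_H2}, whose proof in turn uses the intrinsic $H^2$ regularity of the discrete Laplacian (Lemma \ref{lemma:H2norm}) together with the stabilization term in \eqref{eq:BGN_tr22}.

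The hard part will be closing the $L^2$ estimate in the absence of $H^1$ parabolicity. The normal-vector defects (Lemma \ref{lemma:ud}, Item 7) and the gap between $\Gamma_h^m$ and $\hat\Gamma_{h,*}^m$ (Lemma \ref{lemma:ud}, Item 6) contribute gradient-order terms that cannot be absorbed into any missing stiffness bilinear form. The crucial device is the local integration-by-parts identity \eqref{eq:NT-div}: applied to a vector field equal to the projection error, which is almost normal, it converts a $\nabla_\Gamma\cdot f$ contribution into a zero-order $(\nabla_\Gamma\cdot N)f$ contribution. Combined with the super-approximation estimates and the mass-lumping identities, this maneuver restores the $L^2$ stability that a naive energy argument would lose, and simultaneously exploits the fact that the lack of a stiffness term on the left-hand side of \eqref{eq:BGN_tr11} spares us from invoking inverse inequalities to control error gradients.

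Finally I combine the normal and tangential estimates, apply a discrete Gr\"onwall inequality across the $[T/\tau]$ time steps, and close the induction under $\tau\le ch^k$ and $h\le h_0$. The Gr\"onwall constant depends on the parametrization norm of $X_h^{m+1}$, which is itself propagated inductively by the argument of Section \ref{sec:bbd}; the restriction $k\ge 3$ enters precisely here, because the critical degree for the shape-regularity induction requires the sharper high-order tangential stability of Lemma \ref{lemma:tan_stab_H2}. The proof therefore reduces to verifying that the stability gains---the mass-lumping savings in the velocity estimate and the orthogonality identity \eqref{eq:NT-div}---dominate the instability contributions from the missing $H^1$ parabolicity and the a priori undetermined shape regularity, exactly as anticipated by the itemized balance presented in the introduction.
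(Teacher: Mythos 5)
Your sketch accurately reproduces the paper's strategy: derive the error equation from \eqref{eq:BGN_tr11}, test with the (near-)normal trace of the auxiliary error, restore $L^2$-stability of the transport-type linear form $J^m$ by exploiting the orthogonality of the projection error via \eqref{eq:NT-div} and integration by parts, control the tangential component through the $H^2$ tangential stability of Lemma \ref{lemma:tan_stab_H2} (with Lemma \ref{lemma:H2norm}), and close via discrete Gr\"onwall and the shape-regularity induction of Section \ref{sec:bbd} under $\tau\le ch^k$ with $k\ge 3$. This is essentially the paper's own proof route; the only imprecision is that the test function used in \eqref{e-est-mass} is $I_h(e_h^{m+1}\cdot\bar n_{h,*}^m)$ built from the auxiliary error on $\hat\Gamma_{h,*}^m$, with the passage to $\hat e_h^{m+1}$ on $\hat\Gamma_{h,*}^{m+1}$ handled separately by Lemma \ref{lemma:e-convert}, but this does not alter the substance of your argument.
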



\begin{remark}\label{rmk:init_app}\upshape 
	The condition \eqref{P1} implies the initial approximation error satisfies the estimate $\| \hat e_h^0 \|_{L^2(\Gamma_h^0)}\leq c_0 h^{k+1}$ for some constant $c_0$ which is independent of $h$.
\end{remark}

\begin{remark}\label{rmk:CFL}\upshape 
	The stepsize condition $\tau \le c h^{k}$ is required only in Section \ref{sec:bbd} to prove the shape regularity of the interpolated curve $\Ghsm$.
	In fact, this constraint is not essentially necessary (see Remark \ref{rmk:shape-reg}, where we show this condition can be removed up to the leading order) and is not observed in the numerical experiments (cf. Figure \ref{fig:b}).
\end{remark}

\begin{remark}\label{rmk:subopt}\upshape 
	Sub-optimality of $L^2$ convergence stems from the consistency error analysis in Section \ref{sec:cons_err} and is ubiquitous in other discretizations for transport equations (see \cite{CS98,JP86,Peterson91} and references therein). 
\end{remark}

\begin{remark}\upshape 
	The finite element degree condition $k\geq 3$ stems from the sub-optimal $L^2$ convergence and is required to ensure the boundedness of normal vectors in \eqref{eq:n-bbd}. Condition $k\geq 3$ is also sharp in the derivation of the tangential stability estimates in Section \ref{sec:tan_stab} and \ref{sec:bbd_vel} where we need a suitable induction hypothesis to ensure the smallness of nonlinear terms (also see Remark \ref{rmk:k3}). Such degree condition is common in finite element analysis for nonlinear equations, see \cite{Li-2019,BL24FOCM,BL24,KLL19}.
\end{remark}

\begin{remark}\upshape 
	Our current proof relies on the one-dimensional super-approximation estimate (Lemma \ref{lemma:super_conv2}), which is not applicable to general surface meshes, in order to establish the $L^2$ stability of the $J_{111}$ term in \eqref{eq:J11-decomp} and the tangential stability estimates in Section \ref{sec:tan_stab}.
	Nevertheless, extension to surfaces remains feasible if tensorial parametric finite elements are employed, as the construction of tensorial Gauss--Lobatto quadrature in such cases is straightforward.
\end{remark}

\section{Preliminaries}
\label{section:preparation}

In this section, we introduce some preliminaries of parametric finite element methods (cf. \cite{Dem09,KLL19,Kov18,Dziuk2013b}) and the projection error (cf. \cite{BL24FOCM,BL24}), including notations, basic approximation properties, norm equivalence results and geometric relations. To keep the presentation clean, we put some known results of surface calculus, discrete norms, averaged normal vectors and super-approximation estimates in the appendices.

\subsection{Notations}
\label{section:notation}
The following notations associated to the framework of the projection error will be frequently used in this article. They are similar to the notations in \cite[Section 3.1]{BL24FOCM} and are listed below for the convenience of the readers. 

\begin{longtable}{p{1.1cm}p{11cm}}
	$\Gamma^m$:
	& 
	The exact smooth curve at time level $t=t_m$.\\
	
	$\Gamma_h^m$:
	&
	The numerically computed curve at time level $t=t_m$.\\
	
	$\bfx^{m}$: 
	&
	The nodal vector $\bfx^m=(x_1^m,\dots,x_J^m)^\top$ consisting of the positions of nodes on $\Gamma_h^m$.\\
	
	$\hat\bfx_*^{m}$: 
	&
	The distance projection of $\bfx^{m}$ onto the exact curve $\Gamma^m$, i.e., $\hat\bfx_*^{m}=(\hat x_{1,*}^m,\dots,\hat x_{J,*}^m)^\top$ with $\hat x_{j,*}^m=a^m(x_j^m)$. \\
	
	$\bfx_*^{m+1}$: 
	&
	The new position of $\hat\bfx_*^{m}$ evolving {under the normal component of the prescribed velocity, i.e. $u(t)|_{\Gamma(t)}\cdot n(t) n(t)$}, (without additional tangential motion) from $t_m$ to $t_{m+1}$. \\
	
	$\Ghsm$: 
	&
	The piecewise polynomial curve which interpolates $\Gamma^m$ at the nodes in $\hat\bfx_*^{m}$.\\
	
	$\Gamma_{h,*}^{m+1}$: 
	&
	The piecewise polynomial curve which interpolates $\Gamma^{m+1}$ at the nodes in $\bfx_*^{m+1}$.\\
	
	$X_{h}^{m}$: 
	&
	The finite element function with nodal vector $\bfx^m$. It coincides with the identity map, i.e., ${\rm id}(x)=x$, when it is considered as a function on $\Gamma_{h}^m$. \\
	
	$X_{h}^{m+1}$: 
	&
	The finite element function with nodal vector $\bfx^{m+1}$. 
	When it is considered as a function on $\Gamma_{h}^m$, it represents the local flow map from $\Gamma_{h}^m$ to $\Gamma_{h}^{m+1}$.\\
	
	$\hat X_{h,*}^{m}$: 
	&
	The finite element function with nodal vector $\hat\bfx_*^m$. It coincides with the identity map, i.e., ${\rm id}(x)=x$, when it is considered as a function on $\Ghsm$. It coincides with the discrete flow map from $\hat\Gamma_{h,*}^0$ to $\Ghsm$ when it is considered as a function on $\hat\Gamma_{h,*}^0$.\\
	
	$X_{h,*}^{m+1}$: 
	&
	The finite element function with nodal vector $\bfx_*^{m+1}$. 
	When it is considered as a function on $\Ghsm$, it represents the local flow map from $\Ghsm$ to $\Gamma_{h,*}^{m+1}$.\\
	
	$X^{m+1}$: 
	&
	The local flow map from $\Gamma^m$ to $\Gamma^{m+1}$ under the flow $u(t)|_{\Gamma(t)}\cdot n(t) n(t)$. \\
	
	$\hat e_{h}^m$: 
	&
	The finite element error function with nodal vector $\hat\bfe^m=\bfx^m-\hat\bfx_*^m$.\\ 
	
	$e_{h}^{m+1}$: 
	&
	The auxiliary error function with nodal vector $\bfe^{m+1}=\bfx^{m+1}-\bfx_*^{m+1}$.\\ 
	
	$n^m$: 
	&
	The unit normal vector on $\Gamma^m$. \\
	
	$n^m_*$: 
	&
	The unit normal vector of $\Gamma^m$ inversely lifted to a neighborhood of $\Gamma^m$ (including $\Ghsm$), i.e., $n^m_*=n^m\circ a^m$. \\
	
	$\hat n_{h,*}^m$: 
	&
	The normal vector on $\Ghsm$. \\
	
	$\bar n_{h,*}^m$: 
	&
	The averaged normal vector on $\Ghsm$ is defined in \eqref{eq:bar_n*}, which is not necessarily unit. \\
	
	$n_h^m$: 
	&
	The normal vector on $\Gamma_{h}^m$. \\
	
	$\bar n_h^m$: 
	&
	The averaged normal vector on $\Gamma_{h}^m$ is defined in \eqref{eq:bar_n}, which is not necessarily unit. \\
	
	$\Nsm$: 
	&
	The normal projection operator $\Nsm=n^m_* (n^m_*)^\top$ on $\Ghsm$. \\
	
	$N^m$: 
	&
	The normal projection operator $N^m=n^m (n^m)^\top$ on $\Gamma^m$. 
	Thus $N^m$ is the lift of $\Nsm$ onto $\Gamma^m$, and $N_*^m$ is the extension of $N^m$ to a neighborhood of $\Gm$. \\
	
	$\Nhsm$: 
	&
	The normal projection operator $\Nhsm=\hat n^m_{h,*} (\hat n^m_{h,*})^\top$ on $\Ghsm$. \\
	
	$\Nbhsm$: 
	&
	The averaged normal projection operator $\Nbhsm= \frac{\bar n^m_{h,*}}{|\bar n^m_{h,*}|} (\frac{\bar n^m_{h,*}}{|\bar n^m_{h,*}|})^\top$ on $\Ghsm$. \\
	
	$T_*^m$: 
	&
	The tangential projection operator $T_*^m=I - n^m_* (n^m_*)^\top$ on $\Ghsm$. \\
	
	$T^m$: 
	&
	The tangential projection operator $T^m=I - n^m (n^m)^\top$ on $\Gamma^m$. 
	Thus $T^m$ is the lift of $\Tsm$ onto $\Gamma^m$. \\
	
	$\Thsm$: 
	&
	The tangential projection operator $\Thsm=I - \nhsm (\nhsm)^\top$ on $\Ghsm$. \\
	
	$\Tbhsm$: 
	&
	The averaged tangential projection operator $\Tbhsm=I - \frac{\bar n^m_{h,*}}{|\bar n^m_{h,*}|} (\frac{\bar n^m_{h,*}}{|\bar n^m_{h,*}|})^\top$ on $\Ghsm$. 
%
\end{longtable}

For the simplicity of notation, we shall denote by $I_h\Nbhsm\phi_h$ and $I_h\Tbhsm\phi_h$ the abbreviations of $I_h(\Nbhsm\phi_h)$ and $I_h(\Tbhsm\phi_h)$, respectively. Similar notations are also adopted for $I_h\Nhsm\phi_h$, $I_h\Nsm\phi_h$, $I_h\Thsm\phi_h$, $I_h\Tsm\phi_h$, and so on. 

Let's briefly revisit some basic notations introduced in Section \ref{section:results}. For a curved element $K$ of $\Ghsm$, we denote by $K^0\subset\Gamma_{h}^0$ the element mapped to $K$ through the discrete flow map $\hat X_{h,*}^m:\Gamma_{h}^0\rightarrow\Ghsm$. The parametrization of the element $K^0\subset\Gamma_h^0$ is given by $F_{K^0}: K_{\rm f}^0 \rightarrow K^0$, where $K_{\rm f}^0$ is the flat line segment with the same endpoints as $K^0$. These flat line segments $K_{\rm f}^0$ together form a piecewise linear curve
$$ 
\Gamma_{h,{\rm f}}^0=\bigcup_{K^0\subset\Gamma_h^0} K_{\rm f}^0 . 
$$
Up to the identification of nodal values introduced in Section \ref{section:results}, $\hat X_{h,*}^m:\Gamma_{h,{\rm f}}^0\rightarrow\Ghsm$ represents the unique piecewise polynomial parametrization of $\Ghsm$. We denote by $\| \hat X_{h,*}^m\|_{W^{j,\infty}_h(\Gamma_{h,\rm f}^0)}$ the piecewise Sobolev norms on $\Gamma_{h,\rm f}^0$, i.e., 
\begin{align*}
\| \hat X_{h,*}^m\|_{W^{j,\infty}_h(\Gamma_{h,\rm f}^0)}  
:=  \max_{K_{\rm f}^0\subset \Gamma_{h,\rm f}^0} \| \hat X_{h,*}^m\|_{W^{j,\infty}(K_{\rm f}^0)} . 
\end{align*}
Since each piece $K\subset \Ghsm$ can be endowed with a canonical smooth structure, the piecewise Sobolev norms can be also defined on $\Ghsm$.

\subsection{Lifts and interpolations}
\label{section:interp-lift}

Next, we introduce some standard concepts related to (inverse) lifts and interpolations on curves (cf. \cite{Dem09,KLL19}).

The lift of a function $f$ defined on $\Ghsm$ onto the smooth curve $\Gamma^m$ is defined as 
$$f^l=f\circ (a^m |_\Ghsm)^{-1}.$$
If $f=f_h$ is a finite element function whose domain is not necessarily $\Ghsm$, we first identify $f_h$ on the interpolated curve $\hat \Gamma_{h,*}^m$ and then apply the lifting operation defined above. The inverse lift of $f\in L^2(\Gamma^m)$ onto $\Ghsm$ is defined as $f^{-l} = v\circ a^m$. 

We denote by $I_{K}$ the interpolation operator on the flat segment $K_{\rm f}^0$. Since $F_{K}=a^m\circ F_{K}$ at the nodes of $K_{\rm f}^0$, it follows that $I_{K}[ a^m\circ F_{K} ]=F_{K}$. 
The interpolation of the distance projection $a^m |_\Ghsm: \Ghsm\rightarrow\Gamma^m$ onto the curved curve $\Ghsm$ is defined as 
$$
I_h a^m:=I_{K} [a^m\circ F_{K}] \circ F_{K}^{-1} = {\rm id }
\quad\mbox{on an element}\,\, K\subset \Ghsm . 
$$
For a smooth function $f$ on the smooth curve $\Gamma^m$, we denote by $I_hf$ the interpolation of the inversely lifted function $f^{-l}=f\circ a^m$ onto $\Ghsm$, i.e.,
$$
I_h f:=I_{K} [f\circ a^m\circ F_{K}] \circ F_{K}^{-1} 
\quad\mbox{on an element}\,\, K\subset \Ghsm . 
$$
We denote by $(I_hf)^l = (I_hf)\circ (a^m |_\Ghsm)^{-1}$ the lift of $I_hf$ onto $\Gamma^m$. 
For a piecewise smooth function $f$ on $\Ghsm$ (instead of $\Gm$), we use the same notation $I_hf$ to denote the following interpolated function on $\Ghsm$:   
$$
I_h f:=I_{K} [f\circ F_{K}] \circ F_{K}^{-1} 
\quad\mbox{on an element}\,\, K\subset \Ghsm . 
$$

\subsection{Shape regularity constants and basic approximation properties}
\label{section:interpolated}

Given the discrete flow map $\hat X_{h,*}^m:\Gamma_{h,{\rm f}}^0\rightarrow\Ghsm$, we define the shape regularity constants 
\begin{align}\label{P}
	\begin{aligned}
		\kappa_l
		:=&\max_{0\le m\le l} (
		\| \hat X_{h,*}^m\|_{W^{k-1,\infty}_h(\Gamma_{h,\rm f}^0)} 
		+ 
		\| (\hat X_{h,*}^m)^{-1} \|_{W^{1,\infty}_h(\Ghsm)})  ,\\
		\kappa_{*,l} 
		:=
		&\max_{0\le m\le l} 
		\| \hat X_{h,*}^m\|_{H^k_h(\Gamma_{h,\rm f}^0)} 
		.
	\end{aligned}
\end{align}
The a priori boundedness of $\kl$ and $\ksl$ (independent of $\tau$, $h$ and $l$) shall be proved in Section \ref{sec:bbd}. 

With our notation of identifying a finite element function by its vector of nodal values, the equivalence of $W^{1,p}, p\in [1,\infty]$, norm  follows immediately (cf. \cite[Lemma 4.3]{KLL17})
$$
C_{\kappa_m} ^{-1} \|f_h\|_{W^{1,p}(\Ghsm)} \le \|f_h\|_{W^{1,p}(\Gamma_{h,\rm f}^0)} \le C_{\kappa_m} \|f_h\|_{W^{1,p}(\Ghsm)} ,
$$
for $0\leq m\leq l$.
Since $\hat X_{h,*}^m:\Gamma_{h,\rm f}^0\rightarrow \Ghsm$ is the Lagrange interpolation of $a^m\circ \hat X_{h,*}^m:\Gamma_{h,\rm f}^0\rightarrow\Gm$ on the piecewise flat curve $\Gamma_{h,\rm f}^0$, it follows that (cf. \cite[Eq. (3.3)]{BL24FOCM})
\begin{align}\label{interpol-error-a-1}
\|a^m\circ \hat X_{h,*}^m - \hat X_{h,*}^m \|_{L^2(\Gamma_{h,\rm f}^0)} 
+ h \|a^m\circ \hat X_{h,*}^m - \hat X_{h,*}^m \|_{H^{1}(\Gamma_{h,\rm f}^0)}  
&\le C_\km (1 + \ksm) h^{k+1} .
\end{align} 
Inequality \eqref{interpol-error-a-1} can be equivalently written in the following form by using $I_ha^m={\rm id}$ on $\Ghsm$ and the norm equivalence on $\Gamma_{h,\rm f}^0$ and $\Ghsm$: 
\begin{align}
\|a^m - I_ha^m \|_{L^2(\Ghsm)} 
+ h \|a^m - I_ha^m \|_{H^{1}(\Ghsm)}  
&\le C_\km (1 + \ksm) h^{k+1} ,
\end{align} 
and consequently (cf. \cite[Eqs. (3.6)--(3.7)]{BL24FOCM}),
\begin{align}\label{normal-intpl}
	\begin{split}
		\|\hat n_{h,*}^m - n_*^m\|_{L^2(\Ghsm)} 
		&\le C_\km (1 + \ksm) h^{k},
		\\
		\|\hat n_{h,*}^m - n_*^m\|_{L^\infty(\Ghsm)} 
		&\le C_\km (1 + \ksm) h^{k-1/2}  ,
	\end{split}
\end{align}
where $\hat n_{h,*}^m$ is the piecewise unit normal vector on $\Ghsm$ and $n_*^m$ is the smooth extension of $n^m$ into the neighborhood $D_\delta(\Gm)$ via the retraction map $a^m$.
Moreover, from the chain rule, the following interpolation error estimates hold for any smooth function $f$ on $\Gm$: 
\begin{align}\label{interpol-error-f}
\|f^{-l} - I_hf \|_{L^2(\Ghsm)} 
+ h\|f^{-l} - I_hf \|_{H^1(\Ghsm)} 
&\le 
C_\km (1 + \ksm)
\|f\|_{H^{k+1}(\Gm)}
h^{k+1} ,\\
\|f - (I_hf)^l \|_{L^2(\Gamma^m)} 
+ h\|f - (I_hf)^l \|_{H^1(\Gamma^m)} 
&\le 
C_\km (1 + \ksm)
\|f\|_{H^{k+1}(\Gm)}
h^{k+1} .
\end{align} 

From the norm equivalence,  the following two elementary lemmas quantifying the errors of the mass and stiffness bilinear forms are standard (cf. \cite[Lemma 4.2]{BL24FOCM}, \cite[Lemma 5.6]{Kov18} and \cite[Lemma 4.1]{KLL17}). 
\begin{lemma}\label{lemma:geo-perturb}
	The following geometric perturbation estimates hold for $f_1,f_2\in H^{1}(\Ghsm)$ and their lifts $f_1^l,f_2^l\in H^{1}(\Gm)${\rm:}
	\begin{align*}
		\Big|\int_{\Ghsm} f_1f_2 - \int_{\Gm} f_1^l f_2^l \Big| 
		&\leq 
		C_\km (1 + \ksm)
		h^{k+1} \|f_1\|_{ L^\infty(\Ghsm)}\|f_2\|_{L^2(\Ghsm)}
	\end{align*}
	and
	\begin{align*}
		&\Big|\int_{\Ghsm}\nabla_{\Ghsm} f_1\cdot\nabla_{\Ghsm} f_2 - \int_{\Gm}\nabla_{\Gm} f_1^l\cdot \nabla_{\Gm} f_2^l \Big| \notag\\
		&\qquad\qquad\qquad\qquad 
		\leq 
		C_\km (1 + \ksm) h^{k+1}\|\nabla_{\Ghsm}f_1\|_{ L^\infty(\Ghsm)}\|\nabla_{\Ghsm}f_2\|_{L^2(\Ghsm)} .
	\end{align*}
\end{lemma}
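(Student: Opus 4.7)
The plan is to pull both integrals over $\Gamma^m$ back to $\Ghsm$ using the distance retraction $a^m|_{\Ghsm}$, and reduce the lemma to pointwise estimates for the resulting change-of-measure and change-of-gradient factors. Concretely, I would write
\begin{align*}
\int_{\Gm} f_1^l f_2^l = \int_{\Ghsm} f_1 f_2\, \mu_h, \qquad \nabla_{\Gm} f_i^l = Q_h \nabla_{\Ghsm} f_i,
\end{align*}
where $\mu_h$ is the surface Jacobian of $a^m|_{\Ghsm}$ and $Q_h$ is a matrix built from the tangent projections of $\Gm$ and $\Ghsm$ together with the tangential derivative of $a^m|_\Ghsm$. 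With these formulas, the first difference is $\int_{\Ghsm} f_1 f_2 (1-\mu_h)$, estimated by Hölder's inequality against $\|1-\mu_h\|_{L^\infty(\Ghsm)}$; the second difference has the same structure, with integrand $\nabla_\Ghsm f_1 \cdot (I - \mu_h Q_h^\top Q_h) \nabla_\Ghsm f_2$.

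The first key step is therefore to prove
\begin{align*}
\|1-\mu_h\|_{L^\infty(\Ghsm)} \leq C_\km (1+\ksm) h^{k+1}.
\end{align*}
Because $I_h a^m = \mathrm{id}$ on $\Ghsm$, the retraction displacement equals the interpolation error $a^m - I_h a^m$, whose $H^1(\Ghsm)$ and $L^2(\Ghsm)$ norms are bounded by \eqref{interpol-error-a-1}. The reason we obtain $h^{k+1}$ rather than only $h^k$ is the superapproximation structure of the distance retraction: $a^m-\mathrm{id}$ is pointwise aligned with $n^m_*$, so its tangential gradient enters the expansion of $\mu_h$ only through one Weingarten-type contraction with $n^m_*$, which gains the missing power of $h$. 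Analogously, for the gradient statement I would establish
\begin{align*}
\|I - \mu_h Q_h^\top Q_h\|_{L^\infty(\Ghsm)} \leq C_\km(1+\ksm) h^{k+1};
\end{align*}
with the Jacobian bound in hand, this reduces to comparing the tangent projections $T^m_*$ and $\Thsm$, which via the identity $T^m_* - \Thsm = \Nhsm - N^m_*$ becomes bilinear in the small normal defect $\hat n^m_{h,*}-n^m_*$. The $L^\infty$ bound \eqref{normal-intpl} together with the near-normal orthogonality of the defect yields the required $h^{k+1}$ power.

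The main obstacle is exactly this factor-$h$ upgrade. A direct application of $\|a^m-I_h a^m\|_{W^{1,\infty}(\Ghsm)} \leq C h^k$ or $\|\hat n^m_{h,*}-n^m_*\|_{L^\infty(\Ghsm)} \leq C h^{k-1/2}$ would be one power short of the claimed bound, and cannot be absorbed by the right-hand side of the lemma without eventually losing a power of $h$ in the error analysis of Section~\ref{Proof-THM-1}. The resolution is to use the geometric fact that the interpolation nodes of $\Ghsm$ lie on $\Gm$ by construction, so $a^m-I_h a^m$ vanishes at these nodes and is almost purely normal in between, turning the naive $h^k$ gradient bound into an $h^{k+1}$ contribution in both $1-\mu_h$ and $T^m_*-\Thsm$. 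Once these two pointwise estimates are in place, Hölder's inequality on $\Ghsm$ and the change of variables give both inequalities of the lemma in a single line.
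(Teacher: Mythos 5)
Your overall plan — pull the integral over $\Gm$ back to $\Ghsm$ via $a^m$, write the error as $\int_{\Ghsm} f_1 f_2 (1-\mu_h)$ and $\int_{\Ghsm} \nabla f_1 \cdot (I - \mu_h Q_h^\top Q_h) \nabla f_2$, and estimate the two geometric factors — is the standard route, and it is the one the paper points to through the cited references. But the pointwise bound you assert at the heart of the argument,
\begin{align*}
\|1-\mu_h\|_{L^\infty(\Ghsm)} \leq C_\km (1+\ksm) h^{k+1},
\end{align*}
is \emph{not} available in this paper's shape-regularity framework, and if it were the lemma would say something strictly stronger than it does. The constants that are controlled are $\km$, a $W^{k-1,\infty}$-type quantity, and $\ksm$, an $H^k$-type ($L^2$-based) quantity (see \eqref{P}). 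The closed-form expression $\mu_h=(1-d\,\kappa)\,\nsm\!\cdot\!\nhsm$ for a curve requires, to reach $h^{k+1}$ pointwise, the bound $\|d\|_{L^\infty(\Ghsm)}=\|a^m-I_h a^m\|_{L^\infty(\Ghsm)}\lesssim h^{k+1}$; but the only displacement estimate the framework supplies is the $L^2$-type \eqref{interpol-error-a-1}, which via the inverse inequality gives only $(1+\ksm)h^{k+1/2}$ in $L^\infty$. The $(k+1)$-st derivative of $a^m\circ\hat X_{h,*}^m$ contains a term with $\nabla^{k}\hat X_{h,*}^m$, which is controlled by $\ksm$ in $L^2$ but \emph{not} by $\km$ in $L^\infty$; that is precisely why the lemma has the asymmetric form $\|f_1\|_{L^\infty}\|f_2\|_{L^2}$ rather than $\|f_1\|_{L^2}\|f_2\|_{L^2}$. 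The correct key estimate is
\begin{align*}
\|1-\mu_h\|_{L^2(\Ghsm)} \leq C_\km (1+\ksm) h^{k+1},
\end{align*}
after which H\"older with $\|f_1\|_{L^\infty}\|f_2\|_{L^2}$ closes the first inequality, and the analogous $L^2$ bound on $I - \mu_h Q_h^\top Q_h$ closes the second.

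A secondary point: your explanation of the ``factor-$h$ upgrade'' via a ``Weingarten-type contraction of the tangential gradient'' of $a^m-\mathrm{id}$ is a somewhat indirect reading of what happens. In the standard derivation (Demlow, Kov\'acs) the Jacobian does not see $\nabla_\Ghsm(a^m-\mathrm{id})$ linearly at all: $\mu_h$ depends on the pointwise distance $d$ (which is $O(h^{k+1})$ in $L^2$ because $\Ghsm$ interpolates $\Gm$ at the nodes) and on $\nsm\!\cdot\!\nhsm-1$, which is \emph{quadratic} in the normal defect $\nhsm-\nsm$. That quadratic structure, together with \eqref{normal-intpl}, is what puts the normal-misalignment contribution below $h^{k+1}$ for $k\geq2$; no cancellation in the tangential gradient is required. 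Also, the normal defect $\nhsm-\nsm$ is to leading order tangential (both being unit vectors), not ``near-normal,'' so that part of the wording should be reversed.
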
 
\begin{lemma}\label{lemma:e-blinear}
	For all finite element functions $f_h,g_h\in S_h(\Ghsm)$, it holds that
	\begin{align*}
		\Big|\int_{\Ghm} f_h g_h - \int_{\Ghsm} f_h g_h \Big| 
		&\leq 
		C_\km
		\| \nabla_\Ghsm \ehm \|_{L^2(\Ghsm)} \|f_h\|_{ L^\infty(\Ghsm)}\|g_h\|_{L^2(\Ghsm)}
	\end{align*}
	and
	\begin{align*}
		&\Big|\int_{\Ghm}\nabla_{\Gm} f_h\cdot\nabla_{\Gm} g_h - \int_{\Ghsm}\nabla_{\Ghsm} f_h \cdot \nabla_{\Ghsm} g_h \Big| \notag\\
		&\qquad\qquad\qquad\qquad 
		\leq 
		C_\km \| \nabla_\Ghsm \ehm \|_{L^2(\Ghsm)} \|\nabla_{\Ghsm}f_h\|_{ L^\infty(\Ghsm)}\|\nabla_{\Ghsm}g_h\|_{L^2(\Ghsm)} .
	\end{align*}
\end{lemma}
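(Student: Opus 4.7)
The plan is to pull back both integrals to the common reference curve $\Gamma_{h,\rm f}^0$ and exploit the fact that, under the identification of finite element functions with nodal vectors, $f_h$ and $g_h$ correspond to the \emph{same} polynomial on $\Gamma_{h,\rm f}^0$ whether they are regarded as functions on $\Ghm$ or on $\Ghsm$. Thus the whole discrepancy between the two bilinear forms is concentrated in the pull-back metric, which differs between the two curves only through $X_h^m - \hat X_{h,*}^m = \hat e_h^m$.

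Concretely, for the mass form I would write, element by element via $F_K$,
\begin{align*}
\int_\Ghm f_h g_h - \int_\Ghsm f_h g_h
= \int_{\Gamma_{h,\rm f}^0} (\tilde f_h)(\tilde g_h)\bigl(|\nabla_{\Gamma_{h,\rm f}^0} X_h^m| - |\nabla_{\Gamma_{h,\rm f}^0} \hat X_{h,*}^m|\bigr),
\end{align*}
where $\tilde f_h,\tilde g_h$ are the common reference polynomials. A mean-value argument gives
$\bigl||\nabla X_h^m|-|\nabla \hat X_{h,*}^m|\bigr|\le |\nabla(X_h^m-\hat X_{h,*}^m)| = |\nabla \hat e_h^m|$ (pointwise on $\Gamma_{h,\rm f}^0$), after which Hölder's inequality on $\Gamma_{h,\rm f}^0$ and the $W^{1,p}$-norm equivalence between $\Gamma_{h,\rm f}^0$ and $\Ghsm$ (available because $\kappa_m$ is finite) yield the claimed bound. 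The constant $C_{\kappa_m}$ absorbs the shape-regularity factors coming from norm equivalence.

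For the stiffness form the strategy is identical but bookkeeping is heavier: the tangential gradient transforms as $\nabla_{\Ghm} f_h = G_{X_h^m}^{-1}\nabla_{\Gamma_{h,\rm f}^0}\tilde f_h$ with $G_{X_h^m}$ the first fundamental form induced by $X_h^m$, and likewise on $\Ghsm$. After pull-back, the difference becomes
\begin{align*}
\int_{\Gamma_{h,\rm f}^0}\! \nabla \tilde f_h\!\cdot\! \bigl(G_{X_h^m}^{-1}|\nabla X_h^m| - G_{\hat X_{h,*}^m}^{-1}|\nabla \hat X_{h,*}^m|\bigr) \nabla \tilde g_h,
\end{align*}
and a smoothness argument for the map $A\mapsto |A|\, (A^\top A)^{-1}$ on the set where $A$ is bounded below by $\kappa_m^{-1}$ reduces the bracket to $O(|\nabla \hat e_h^m|)$ pointwise. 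Hölder and the norm equivalence then give the stated stiffness bound.

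The main obstacle will be ensuring that the pointwise estimate of the Jacobian / metric difference is uniform, which requires both $|\nabla X_h^m|$ and $|\nabla \hat X_{h,*}^m|$ to stay away from zero in terms of $\kappa_m$ alone; this is where the a priori boundedness of the shape regularity constant $\kappa_m$ (to be established in Section \ref{sec:bbd}) is implicitly used. Otherwise the proof is routine and is essentially a line-by-line adaptation of \cite[Lemma 4.2]{BL24FOCM} and \cite[Lemma 4.1]{KLL17}, so I would not grind through the bookkeeping in full detail.
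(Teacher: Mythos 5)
The paper does not actually prove this lemma; it cites it as a standard geometric perturbation result, and your pull-back-to-$\Gamma_{h,\rm f}^0$ argument — using that $f_h,g_h$ have the \emph{same} reference polynomial on both $\Ghm$ and $\Ghsm$, so that the discrepancy is entirely in the pull-back Jacobians (and metric factors), whose difference is pointwise $\lesssim |\nabla_{\Gamma_{h,\rm f}^0}\ehm|$ by the reverse triangle inequality — is exactly the standard route taken in the cited references. The proof is correct.

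One small correction to your closing paragraph: the lower bound on $|\nabla_{\Gamma_{h,\rm f}^0} X_h^m|$ (needed in the stiffness estimate, where it appears in a denominator) does \emph{not} come from the a priori boundedness of $\km$ established in Section~\ref{sec:bbd}; invoking that here would be circular, since that boundedness is proved using these very perturbation estimates. What is actually used is the induction hypothesis \eqref{eq:ind_hypo1} together with the inverse inequality, which gives $\| \nabla_\Ghsm \ehm \|_{L^\infty(\Ghsm)} \lesssim h$ in \eqref{Linfty-W1infty-hat-em}; hence $\nabla X_h^m = \nabla \hat X_{h,*}^m + \nabla \ehm$ stays uniformly close to $\nabla \hat X_{h,*}^m$, whose Jacobian is bounded below by $\km^{-1}$ via the $\|(\hat X_{h,*}^m)^{-1}\|_{W^{1,\infty}}$ term in the definition of $\km$. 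The constant $C_{\km}$ in the lemma is allowed to depend on a possibly still-uncontrolled $\km$, which is precisely what makes the later Gr\"onwall/bootstrap argument non-circular.
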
 

In the rest of this paper, we use $C$ as a generic positive constant which may be different at different occurrences, possibly dependent on $T$ and $\kappa_m$ ($m$ is the current time level and can be easily read off from the inequality), but is independent of $\tau$, $h$, and $\ksm$. 
We use the notation $A \lesssim B$ to denote the relation ``$A\le CB$ for some constant $C$''. If $A\lesssim B$ and $B\lesssim A$ at the same time, then we use the notation $A\sim B$.
Besides, we denote by $C_0$ another generic positive constant which is independent of $\kappa_m$ and $\ksm$.


\subsection{Induction hypothesis}
\label{sec:ind-hypo}

We assume that the following conditions hold for $m=0,\dots,l$ (and then prove that these conditions could be recovered for $m=l+1$): 
\begin{enumerate}
\item[(1)]
The numerically computed curve $\Gamma_h^m$ is in a $\delta$-neighborhood of the exact curve $\Gamma^m$. Therefore, the distance projection of the nodes of $\Gamma_h^m$ onto $\Gamma^m$ are well defined (thus the interpolated curve $\Ghsm$ is well defined). 

\item[(2)]
The error $\ehm=X_h^m-\hat X_{h,*}^m$ satisfies the following estimates: 
\begin{align}
	\| \ehm \|_{L^2(\Ghsm)} + h\| \ehm \|_{H^1(\Ghsm)} &\leq h^{2.5} . \label{eq:ind_hypo1} 
\end{align}

\end{enumerate}

\begin{remark}\upshape
	The exponent $2.5$ is sharp in the derivation of the last inequality in \eqref{eq:H2-tan-stab2}.
\end{remark}

Based on these induction assumptions, the following results can be obtained from \eqref{eq:ind_hypo1} by applying the inverse inequality of finite element functions: 
\begin{align}\label{Linfty-W1infty-hat-em}
\| \nabla_{\Ghsm} \ehm \|_{L^2(\Ghsm)}\leq h^{1.5},
\quad
\|  \ehm \|_{L^\infty(\Ghsm)} \lesssim h^{2}
\quad\mbox{and}\quad 
\| \nabla_\Ghsm \ehm \|_{L^\infty(\Ghsm)} \lesssim h ,
\end{align} 
which, according to \cite[Lemma 4.3]{KLL17}, guarantee the equivalence of $L^p$ and $W^{1,p}$ norms, $1\le p\le \infty$, of finite element functions with a common nodal vector on the family of curves 
$$\hat\Gamma_{h,\theta}^m=(1 - \theta)\Ghsm + \theta\Ghm ,\quad  \theta\in [0, 1] .$$
As a consequence of \eqref{Linfty-W1infty-hat-em} and Lemma \ref{lemma:n_bar_app}, we have the following boundedness of normal vectors:
\begin{align}\label{eq:n-bbd}
	\| \nhm \|_{W_h^{1,\infty}(\Ghsm)}
	+
	\| \nbhm \|_{W_h^{1,\infty}(\Ghsm)}
	+
		\| \nhsm \|_{W_h^{1,\infty}(\Ghsm)}
	+
	\| \nbhsm \|_{W_h^{1,\infty}(\Ghsm)}
		\lesssim
		1.
\end{align}

\subsection{Geometric relations}\label{section:geometry}

The projection error at the time level $m$ is defined as $\ehm := X_h^m -  \hat X_{h,*}^m$ where $\hat X_{h,*}^m$ is the finite element function whose nodal values coincide with the projected nodes of $\Gamma_h^m$ onto $\Gm$.
By definition, at the time level $m+1$ we have the following nodal relation
\begin{align}\label{eq:geo_rel_1}
	\ehM = I_h\big[ (\eM\cdot \nsM)\nsM \big] +f_h ,
\end{align}
with
\begin{align}\label{eq:geo_rel_2}
	|f_h| \lesssim |[ I-n_*^{m + 1} (n_*^{m + 1})^\top ] e_h^{m+1} |^2
	\quad\mbox{at the nodes of $\GhsM$},
\end{align}
where $f_h$ can be interpreted as a quadratic remainder of the nodal-wise orthogonal projection due to the presence of curvature.

Let $X_{h,*}^{m+1}:\Ghsm\rightarrow \Gamma_{h,*}^{m+1}$ be the unique polynomial local flow map where the nodes of $\Ghsm$ move along $u(t)\cdot n(t) n(t)$, and we denote by $X^{m+1}:\Gamma^m\rightarrow\Gamma^{m+1}$ the exact local flow map along $u(t)\cdot n(t) n(t)$.
Since $X_{h,*}^{m+1} - \hat X_{h,*}^m = X^{m+1} - {\rm id}$ at the finite element nodes on $\Gm$, it follows that 
\begin{align}
		&X_{h,*}^{m+1} - \hat X_{h,*}^m =I_h(X^{m+1} - {\rm id}) &&\mbox{on}\,\,\, \Ghsm , \label{X-id-Hn0} \\
		&X^{m+1} - {\rm id} = \tau ( {{u^m \cdot n^m n^m}}  +  g^m )  &&\mbox{on}\,\,\,  \Gamma^m, \label{X-id-Hn}
\end{align}
where $u^m \cdot n^m n^m$ is the normal part of the prescribed flow $u(t)$ without the tangential motion at time level $t=t_m$, and $g^m$ is the smooth correction from the Taylor expansion, satisfying the following estimate: 
\begin{align}\label{W1infty-g}
	\|g^m\|_{W^{1,\infty}(\Gamma^m)}\le C\tau . 
\end{align}
Therefore, we obtain 
\begin{align}\label{eq:geo_rel_3}
	\begin{aligned}
		X_h^{m+1} - X_h^m 
		&= \eM - \ehm + X_{h,*}^{m+1} - \hat X_{h,*}^m \\
		&= \eM - \ehm + \tau I_h(u^m \cdot n^m n^m + g^m) .
	\end{aligned}
\end{align}
This relation helps us convert the numerical displacement $X_h^{m+1} - X_h^m$ to the error displacement $\eM - \ehm$.

The following geometric identities have been proved in \cite[Eqs. (A.15)--(A.17)]{BL24FOCM} which will help us handle $\hat X_{h,*}^{m + 1}$ in Section \ref{sec:err-est} and \ref{sec:bbd}:
\begin{align}
	\Nsm (\hat X_{h,*}^{m + 1} -\hat X_{h,*}^{m})
	&= (X^{m + 1} - {\rm id})\circ a^m + \rho_h && \mbox{at the nodes} , \label{eq:geo_rel_4}\\
	\mbox{where}\,\,\, |\rho_h| 
	&\le C_0 \tau^2 + C_0 |T_*^m (\hat X_{h,*}^{m + 1} -\hat X_{h,*}^{m})|^2
	&& \mbox{at the nodes} , \label{eq:geo_rel_5}\\
	T_*^m (\hat X_{h,*}^{m + 1} -\hat X_{h,*}^{m})
	&= T_*^m (X_{h}^{m + 1} - X_{h}^{m})
	+ T_*^m (N_*^{m+1}-N_*^{m})  \hat e_{h}^{m + 1} && \mbox{at the nodes} , \label{eq:geo_rel_6}
\end{align}
where $C_0$ is a constant that is independent of $\km$ and $\ksm$.

\section{Stability estimates}
\label{sec:stab-est}

\subsection{Consistency error}\label{sec:cons_err}

We define the consistency error for the first equation in the transport BGN system, i.e. Eq. \eqref{eq:BGN_tr11}, at the time level $t_m$ to be the following linear functional on $S_h(\Ghsm)$: 
\begin{align}\label{eq:cons_eq}
	d^m(\chi_h) 
	&:= 
	\int_{\hat\Gamma_{h,*}^{m}}^h \frac{X_{h,*}^{m+1}-{\rm id}}{\tau} \cdot \nbhsm \, \chi_h
	- \int_{\hat\Gamma_{h,*}^{m}}^h u(t_m)|_\Ghsm \cdot \nbhsm \chi_h 
	\notag \\
	&= 
	\int_{\hat\Gamma_{h,*}^{m}}^h I_h\Big( \frac{X^{m+1} - {\rm id}}{\tau} - u(t_m)|_\Ghsm \Big) \cdot \nsm \chi_h
	\notag\\
	&\quad+
	\int_{\hat\Gamma_{h,*}^{m}}^h I_h\Big( \frac{X^{m+1} - {\rm id}}{\tau} - u(t_m)|_\Ghsm \Big) \cdot (\nbhsm - \nsm) \chi_h
	\notag\\
	&=: d_1^m(\chi_h) + d_2^m(\chi_h), \quad\forall \chi\in S_h(\Ghsm) ,
\end{align}
where the averaged normal vector $\nbhsm$ is defined in \eqref{eq:bar_n*}.
From the geometric relations \eqref{X-id-Hn}--\eqref{W1infty-g}, Lemma \ref{lemma:lump}, and the consistency estimates for normal vectors (Lemma \ref{lemma:n_bar_app}), it follows that
\begin{align*}
	| d_{1}^m(\chi_h)|  &\lesssim \tau \| \chi_h \|_{L^2(\Ghsm)} ,\\
	| d_{2}^m(\chi_h) | &\lesssim (1 + \ksm) \tau h^k \| \chi_h \|_{L^2(\Ghsm)} . 
\end{align*}
The results are summarized in the following lemma.
\begin{lemma}
	The consistency error defined in \eqref{eq:cons_eq} satisfies
	\begin{align}\label{eq:cons_err}
		| d^m(\chi_h)| &\lesssim (\tau + (1 + \ksm) \tau h^k) \| \chi_h \|_{L^2(\Ghsm)} 
		\quad\forall\,\chi_h\in S_h(\Ghsm) . 
	\end{align}
\end{lemma}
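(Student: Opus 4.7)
The plan is to bound $d_1^m$ and $d_2^m$ from the decomposition in \eqref{eq:cons_eq} separately. The common input is the Taylor-type expansion \eqref{X-id-Hn}--\eqref{W1infty-g}: at every Gauss--Lobatto node $y_j = a^m(x_j^m) \in \Gamma^m$ of $\Ghsm$,
\begin{align*}
\Bigl(\tfrac{X^{m+1}-{\rm id}}{\tau} - u(t_m)|_\Ghsm\Bigr)(y_j) = -T^m u^m(y_j) + g^m(y_j),
\end{align*}
where $T^m$ is the tangential projection on $\Gamma^m$, the Taylor remainder satisfies $\|g^m\|_{W^{1,\infty}(\Gamma^m)}\lesssim \tau$ by \eqref{W1infty-g}, and the tangential piece $T^m u^m$ is pointwise orthogonal to $n^m$.

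For $d_1^m$ I would contract the identity above with $n_*^m(y_j) = n^m(y_j)$. The tangential part is annihilated by the dot product with $n^m$, so the nodal integrand collapses to $g^m \cdot n^m$, which has pointwise size $\lesssim \tau$. A direct application of the mass-lumped Cauchy--Schwarz estimate (Lemma \ref{lemma:lump}) then immediately delivers $|d_1^m(\chi_h)| \lesssim \tau \|\chi_h\|_{L^2(\Ghsm)}$, with no further effort required.

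For $d_2^m$, the nodal integrand is paired with $\nbhsm - n_*^m$ instead, which has $L^2$ size of order $(1+\ksm)h^k$ on $\Ghsm$ through \eqref{normal-intpl} and Lemma \ref{lemma:n_bar_app}. The main obstacle is that the leading tangential term $-T^m u^m$ is only $O(1)$ pointwise, so a brute-force Cauchy--Schwarz would fail to capture the advertised smallness. To get around this I would exploit, in tandem, the pointwise orthogonality $T^m u^m \cdot n_*^m \equiv 0$ on all of $\Ghsm$ (which holds everywhere, not just at nodes, because $n_*^m = n^m\circ a^m$ inverse-lifts the exact normal while $T^m u^m$ is tangent to $\Gamma^m$), and the defining $L^2$-projection identity \eqref{eq:bar_n} of the averaged normal, which lets one substitute $\nbhsm$ by the piecewise exact normal $\hat n_{h,*}^m$ after testing against a suitable finite element interpolant. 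The orthogonality forces the $n_*^m$-part of the pairing to vanish under mass lumping, and the averaging identity together with \eqref{normal-intpl} reduces the remaining tangential contribution to the magnitude of $\hat n_{h,*}^m - n_*^m$. The $g^m$-piece is handled trivially through $\|g^m\|_{L^\infty}\lesssim \tau$ combined with the $L^2$ smallness of $\nbhsm - n_*^m$. The hardest step is the orthogonality-based reduction of the tangential term in $d_2^m$; once that is in place, summing the two bounds yields the stated estimate.
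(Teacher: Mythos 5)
Your argument for $d_1^m$ is correct and matches the paper's intent: under the Gauss--Lobatto mass lumping only nodal values matter, and by \eqref{X-id-Hn}--\eqref{W1infty-g} the nodal integrand reduces to $(-T^m u^m + g^m)\cdot \nsm\,\chi_h = g^m\cdot \nsm\,\chi_h = O(\tau)\chi_h$, so Lemma~\ref{lemma:lump} gives $|d_1^m(\chi_h)|\lesssim\tau\|\chi_h\|_{L^2(\Ghsm)}$.

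The $d_2^m$ part is where you should pause, because no orthogonality refinement can produce the extra factor $\tau$ that the lemma's term $(1+\ksm)\tau h^k$ seems to advertise. At the nodes the interpolated velocity gap equals $-T^m u^m + g^m$, which is genuinely $O(1)$ (the tangential speed $T^m u^m$ is not small), while $\nbhsm - \nsm$ is a purely geometric quantity of $L^2_h$--size $(1+\ksm)h^k$ with no $\tau$ dependence. Your proposed refinement --- replace $\nbhsm$ by $\nhsm$ via \eqref{eq:bar_n*1}, then kill the $\nsm$-direction using $T^m u^m\cdot \nsm = 0$ --- only reduces the pairing to $T^m u^m\cdot T^m(\nhsm - \nsm)$, which remains an $O(1)\times O(h^k)$ product. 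In fact the refinement buys nothing over a direct mass-lumped Cauchy--Schwarz (Lemma~\ref{lemma:lump}): since $\|{-T^m u^m + g^m}\|$ is bounded at the nodes and $\|\nbhsm - \nsm\|_{L^2_h(\Ghsm)}\lesssim(1+\ksm)h^k$ by Lemma~\ref{lemma:n_bar_app} and \eqref{normal-intpl}, one gets $|d_2^m(\chi_h)|\lesssim(1+\ksm)h^k\|\chi_h\|_{L^2(\Ghsm)}$ immediately. Your closing claim that ``summing the two bounds yields the stated estimate'' is therefore incorrect: what your argument proves is $|d^m(\chi_h)|\lesssim(\tau + (1+\ksm)h^k)\|\chi_h\|_{L^2(\Ghsm)}$.

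The consolation is that the missing $\tau$ appears to be a slip in the lemma's statement, not a bound you are expected to reach. The honest rate $(\tau + (1+\ksm)h^k)$ is exactly what the downstream argument actually uses: in Section~\ref{sec:bbd_vel} the resulting estimate is $\|\delta\ehm\cdot\nbhsm\|_{L^2_h(\Ghsm)}\lesssim\tau(\tau + (1+\ksm)h^k)+\tau\|\ehm\|_{H^1(\Ghsm)}$, where the $(1+\ksm)h^k$ factor without $\tau$ already enters from the $I_h\Tsm v^m$ term. So finish the $d_2^m$ bound with the direct Cauchy--Schwarz argument, state the conclusion $|d^m(\chi_h)|\lesssim(\tau + (1+\ksm)h^k)\|\chi_h\|_{L^2(\Ghsm)}$, and observe that this weaker but correct bound suffices for all subsequent estimates.
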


\subsection{Naive estimates for linear forms}\label{sec:J_stab}

Subtracting \eqref{eq:cons_eq} from \eqref{eq:BGN_tr11}, we get the following error equation: 
\begin{align}\label{eq:err_eq1}
	&\int_{\Gamma_{h}^{m}}^h \frac{X_h^{m+1}- X_h^m}{\tau} \cdot \nbhm \chi_h
	- \int_{\hat\Gamma_{h, *}^{m}}^h \frac{X_{h, *}^{m+1}- \hat X_{h, *}^m}{\tau}  \cdot \nbhsm \chi_h \notag\\
	&\quad- \int_{\Gamma_h^m}^h u(t_m)|_{\Ghm} \cdot \nbhm \chi_h 
	+ \int_{\hat\Gamma_{h,*}^m}^h u(t_m)|_{\Ghsm} \cdot \nbhsm\chi_h \notag\\
	&= -d^m(\chi_h) .
\end{align} 
The left-hand side of \eqref{eq:err_eq1} can be furthermore written as 
\begin{align}\label{eq:mass_diff}
	&\int_{\Gamma_{h}^{m}}^h \frac{X_h^{m+1}- X_h^m}{\tau} \cdot \nbhm \chi_h
	- \int_{\hat\Gamma_{h, *}^{m}}^h \frac{X_{h, *}^{m+1}- \hat X_{h, *}^m}{\tau}  \cdot \nbhsm \chi_h \notag\\
	&\quad- \int_{\Gamma_h^m}^h u(t_m)|_{\Ghm} \cdot \nbhm \chi_h 
	+ \int_{\hat\Gamma_{h,*}^m}^h u(t_m)|_{\Ghsm} \cdot \nbhsm\chi_h \notag\\
	&= \int_{\hat\Gamma_{h, *}^{m}}^h \frac{e_h^{m+1}-\hat e_h^m}{\tau} \cdot \nbhsm \chi_h + J^m(\chi_h) ,
\end{align} 
with
\begin{align}\label{def-Jm-phi}
	J^m(\chi_h) 
	&=  
	\int_{\Ghm}^h \frac{X_{h}^{m+1}- X_{h}^m }{\tau}  \cdot \nbhm \chi_h  
	-
	 \int_{\hat\Gamma_{h, *}^{m}}^h \frac{X_{h}^{m+1}- X_{h}^m }{\tau} \cdot \nbhsm \chi_h
	\notag\\
	&\quad- \int_{\Gamma_h^m}^h u(t_m)|_{\Ghm} \cdot \nbhm \chi_h 
	+ \int_{\hat\Gamma_{h,*}^m}^h u(t_m)|_{\Ghsm} \cdot \nbhsm\chi_h \notag\\
	&=  \int_{\Ghm}^h \frac{\delta \tilde X_h^m}{\tau}  \cdot n_h^m \chi_h  
	-  \int_{\hat\Gamma_{h,*}^m}^h \frac{\delta \tilde X_h^m}{\tau}  \cdot \hat n_{h,*}^m \chi_h  
	\notag\\
	&\quad
	-  \int_{\hat\Gamma_{h,*}^m}^h (I_h u(t_m)|_{\Ghm} - I_h u(t_m)|_{\Ghsm}) \cdot \hat n_{h,*}^m \chi_h \notag\\
	&=: J_1^m(\chi_h) + J_2^m(\chi_h),
\end{align}
where $\delta \tilde X_h^m := X_h^{m+1} - X_h^{m}  - \tau I_h u(t_m) |_\Ghm \in S_h(\Ghm)^2$ and in the second equality we have used \eqref{eq:bar_n} and \eqref{eq:bar_n*1} to convert the averaged normal vectors $\nbhm$ and $\nbhsm$ to the corresponding piecewise unit normal vectors $\nhm$ and $\hat n_{h,*}^m$.

Since the curve discrepancy and  $\nhm - \nhsm$ each contribute an error of order $\nabla_\Ghsm \ehm$ (cf. Lemma \ref{lemma:ud}, Items 6, 7), $J_1^m$ can be bounded by
\begin{align}
	| J_1^m(\chi_h) |
	\lesssim
	\| \nabla_\Ghsm \ehm \|_{L^2(\Ghsm)} \| \chi_h \|_{L^2(\Ghsm)} , \notag
\end{align}
and the Lipschitz continuity of $u(t_m)$ implies
\begin{align}
	| J_2^m(\chi_h) |
	\lesssim
	\| \ehm \|_{L^2(\Ghsm)} \| \chi_h \|_{L^2(\Ghsm)} . \notag
\end{align}
In summary, we have the following lemma.
\begin{lemma}\label{lemma:J_ele}
	The linear form $J^m$ defined in \eqref{def-Jm-phi} admits the upper bound:
	\begin{align}\label{eq:J_ele}
		| J^m(\chi_h) |
		\lesssim
		\| \ehm \|_{H^1(\Ghsm)} \| \chi_h \|_{L^2(\Ghsm)} .
	\end{align}
\end{lemma}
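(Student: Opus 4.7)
The plan is to bound $J_1^m$ and $J_2^m$ separately along the lines sketched after the definition \eqref{def-Jm-phi}.

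For $J_2^m$, I would view $I_h u(t_m)|_{\Ghm}$ and $I_h u(t_m)|_{\Ghsm}$ as finite element functions via their nodal vectors, so that at the node $x_j^m$ their difference is $u(t_m, x_j^m) - u(t_m, \hat x_{j,*}^m)$. Lipschitz continuity of $u(t_m,\cdot)$ gives the nodal bound $\lesssim |x_j^m - \hat x_{j,*}^m|$, so the nodal vector is controlled by that of $\hat e_h^m$. The lumped-mass $L^2$ equivalence on $\Ghsm$ (Lemma \ref{lemma:lump}) together with $\|\hat n_{h,*}^m\|_{L^\infty}\lesssim 1$ from \eqref{eq:n-bbd} then yields $|J_2^m(\chi_h)| \lesssim \|\hat e_h^m\|_{L^2(\Ghsm)}\|\chi_h\|_{L^2(\Ghsm)}$.

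For $J_1^m$, I would insert $\pm \int_\Ghsm^h \frac{\delta\tilde X_h^m}{\tau}\cdot n_h^m\,\chi_h$ to split it into a surface-discrepancy piece and a normal-vector-discrepancy piece. The surface-discrepancy piece is estimated by Lemma \ref{lemma:e-blinear}, which contributes a factor $\|\nabla_\Ghsm \hat e_h^m\|_{L^2(\Ghsm)}\|\delta\tilde X_h^m/\tau\|_{L^\infty(\Ghsm)}\|\chi_h\|_{L^2(\Ghsm)}$. The normal-vector piece uses the consistency estimate $\|n_h^m - \hat n_{h,*}^m\|_{L^2(\Ghsm)}\lesssim \|\nabla_\Ghsm \hat e_h^m\|_{L^2(\Ghsm)}$ coming from Lemma \ref{lemma:ud} (Items 6--7). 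Both contributions therefore share the prefactor $\|\delta\tilde X_h^m/\tau\|_{L^\infty(\Ghsm)}$, and I must control this by an $O(1)$ constant.

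The main technical obstacle is precisely this $L^\infty$ bound on $\delta\tilde X_h^m/\tau$. Combining the definition $\delta\tilde X_h^m = X_h^{m+1} - X_h^m - \tau I_h u(t_m)|_\Ghm$ with the geometric decomposition \eqref{eq:geo_rel_3} gives $\delta\tilde X_h^m = \hat e_h^{m+1} - \hat e_h^m + \tau I_h(u^m\cdot n^m n^m + g^m) - \tau I_h u(t_m)|_\Ghm$. Nodewise, using Lipschitz continuity of $u(t_m,\cdot)$ and \eqref{W1infty-g}, the last two terms collapse to $-\tau I_h(u^m\cdot T^m) + O(\tau^2 + \tau\|\hat e_h^m\|_{L^\infty})$, which divided by $\tau$ is $O(1)$ in $L^\infty$. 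The remaining piece $(\hat e_h^{m+1} - \hat e_h^m)/\tau$ represents the tangential part of the discrete velocity, since testing \eqref{eq:BGN_tr11} shows $\delta\tilde X_h^m\cdot\bar n_h^m$ vanishes in the lumped-mass sense at nodes; its $L^\infty$ boundedness is to be inherited from the tangential-velocity stability established in Section \ref{sec:bbd_vel}, combined with an inverse inequality under the induction hypothesis \eqref{eq:ind_hypo1}. Summing the three contributions yields $|J_1^m(\chi_h)|\lesssim \|\nabla_\Ghsm \hat e_h^m\|_{L^2(\Ghsm)}\|\chi_h\|_{L^2(\Ghsm)}$, which added to the bound on $J_2^m$ produces the estimate \eqref{eq:J_ele}.
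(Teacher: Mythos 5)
Your decomposition of $J^m$ into $J_1^m$ and $J_2^m$, the Lipschitz-plus-lumped-mass argument for $J_2^m$, and the identification of the $L^\infty$ bound on $\delta\tilde X_h^m/\tau$ as the real technical content of the estimate for $J_1^m$ all track the paper's (quite terse) argument faithfully; the split of $J_1^m$ into a surface-discrepancy piece and a normal-vector-discrepancy piece is exactly the mechanism the paper has in mind via Lemma~\ref{lemma:ud}, Items 6--7.

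However, the way you propose to close the $L^\infty$ bound is circular. You say the boundedness of $\delta\tilde X_h^m/\tau$ should be ``inherited from the tangential-velocity stability established in Section~\ref{sec:bbd_vel}''. But the estimates of Section~\ref{sec:bbd_vel} --- in particular \eqref{eq:vel_N1}, \eqref{eq:vel_T} and the $W^{1,\infty}$ bound \eqref{eq:de-bbd} on $\delta\hat{e}_h^m$ --- are themselves obtained by testing the error equation \eqref{eq:err_eq2} and applying \emph{precisely} the bound \eqref{eq:J_ele} which Lemma~\ref{lemma:J_ele} is supposed to deliver. The induction hypothesis \eqref{eq:ind_hypo1} does not rescue the step either: it controls $\hat{e}_h^m$ only up to level $m$, whereas the quantity appearing in \eqref{eq:geo_rel_3} is $e_h^{m+1}$ (not $\hat e_h^{m+1}$ as you wrote), a level-$(m{+}1)$ object on which no a priori bound is yet available at this stage.

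A non-circular route to $\|\delta\tilde X_h^m/\tau\|_{L^\infty(\Ghsm)}\lesssim 1$ does exist, using ingredients that logically precede Lemma~\ref{lemma:J_ele} even though some are stated a few lines later in the text. By \eqref{eq:BGN_tr11}, $\delta\tilde X_h^m\cdot\bar n_h^m=0$ at every Gauss--Lobatto node, so $I_h\Nbhm\delta\tilde X_h^m\equiv 0$ and $\delta\tilde X_h^m = I_h\Tbhm\delta\tilde X_h^m$. Setting $\delta X_h^m := X_h^{m+1}-X_h^m-\tau I_h v^m$ as in Lemma~\ref{lemma:tan_stab}, the difference $\delta X_h^m-\delta\tilde X_h^m = \tau I_h(u(t_m)|_\Ghm - v^m)$ has normal component at the nodes controlled by the constraint $v^m\cdot n^m = u^m\cdot n^m$, Lipschitz continuity of $u$ and Lemma~\ref{lemma:n_bar_app}, so $\|I_h\Nbhm\delta X_h^m\|_{L^2(\Ghsm)}\lesssim\tau\big((1+\ksm)h^k+\|\ehm\|_{H^1(\Ghsm)}\big)$. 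Feeding this into the $H^1$ tangential-stability estimate \eqref{eq:tan_stab} --- which is imported from the earlier curve-shortening analysis and does \emph{not} depend on Lemma~\ref{lemma:J_ele} --- and invoking \eqref{eq:ind_hypo1} gives $\|I_h\Tbhm\delta X_h^m\|_{H^1(\Ghsm)}\lesssim\tau$, hence $\|I_h\Tbhm\delta X_h^m\|_{L^\infty(\Ghsm)}\lesssim\tau$ by the one-dimensional Sobolev embedding. Since the extra piece $\tau I_h\Tbhm I_h(u(t_m)|_\Ghm - v^m)$ is $O(\tau)$ in $L^\infty$ by Lipschitz continuity of $u$ and \eqref{eq:n-bbd}, this yields the desired $L^\infty$ bound, after which your estimate of $J_1^m$ closes as you sketched.
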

Substituting \eqref{eq:mass_diff} into \eqref{eq:err_eq1}, we can rewrite the error equation into the following form: 
\begin{align}\label{eq:err_eq2}
	\int_{\hat\Gamma_{h, *}^{m}}^h \frac{e_h^{m+1}-\hat e_h^m}{\tau} \cdot {{\bar n_{h, *}^m}} \chi_h + J^m(\chi_h)
	= -d^m(\chi_h) .
\end{align} 
\begin{remark}\upshape
	The upper bound in \eqref{eq:J_ele} is not stable since for transport equations we do not have $H^1$ parabolicity to control $\| \ehm \|_{H^1(\Ghsm)}$. Thanks to the intrinsic orthogonality in the projection error, a refined version of Lemma \ref{lemma:J_ele} shall be derived in Section \ref{sec:refined} (see \eqref{eq:Je}). The improved estimate for the linear form $J^m$ implies the transport $L^2$-stability of the error equation \eqref{eq:err_eq2}.
\end{remark}

\subsection{High-order tangential stability estimates}
\label{sec:tan_stab}
By calculating the first variation, the Euler-Lagrange equation of the deformation rate functional
$$
\min_{v\in H^1(\Gamma)} \int_{\Gamma}|\nabla_\Gamma v|^2 
\quad\mbox{under the pointwise constraint $v\cdot n = u\cdot n$}
$$
is the following elliptic velocity system
\begin{subequations}\label{subeq:system0}
	\begin{align}
		v\cdot n&=u\cdot n \label{eq:ell_sys_v10} , \\
		-\Delta_\Gamma v&=\lambda n  . \label{eq:ell_sys_v20}
	\end{align}
\end{subequations}
The function $\lambda$ is the Lagrange multiplier for the pointwise constraint $v\cdot n = u\cdot n$. By the direct method of calculus of variation, it can be shown that the deformation rate functional has a unique smooth minimizer.
In this subsection we are going to show the consistency between the discrete velocity generated by the transport BGN method \eqref{eq:BGN_tr11}--\eqref{eq:BGN_tr22} and the elliptic velocity system \eqref{eq:ell_sys_v10}--\eqref{eq:ell_sys_v20}.


Since \eqref{eq:ell_sys_v10} implies that $v^m= (u^m \cdot n^m) n^m + T^m v^m$, where $T^m =I-n^m(n^m)^\top$ is the tangential projection matrix on $\Gm$, the following relation follows from \eqref{eq:geo_rel_3} and the nodal relation $T^m = \Tsm$:
\begin{align}\label{eq:geo_rel_v}
	X_h^{m+1} - X_h^{m}  - \tau I_h v^m 
	&= X_h^{m+1} - X_h^{m}  - \tau I_h (u^m \cdot n^m n^m)  - \tau I_h \Tsm v^m \notag\\
	&= \eM - \ehm  - \tau I_h \Tsm v^m + \tau I_h g^m \quad\mbox{on}\,\,\,\Ghsm. 
\end{align}

The following relation can be obtained by subtracting integral $\tau \int_{\Gamma_h^m} \nabla_{\Gamma_h^m} I_h v^m \cdot  \nabla_{\Gamma_h^m}  \phi_h $ from the both sides of the numerical scheme in \eqref{eq:BGN_tr22}: 
\begin{align}\label{eq:stiff-iden}
	&\int_{\Gamma_h^m} \nabla_{\Gamma_h^m} (X_h^{m+1} - X_h^{m} - \tau I_h v^m) \cdot  \nabla_{\Gamma_h^m}  \phi_h \notag\\
	&= 
	\int_{\Gamma_h^m}^h \eta_h^{m+1} \bar n_h^m\cdot \phi_h
	- \int_{\Gamma_h^m} \nabla_{\Gamma_h^m} X_h^{m} \cdot  \nabla_{\Gamma_h^m} I_h [(\phi_h\cdot\bar n_h^m) \bar n_h^m ]
	\notag\\
	&\quad- \tau \int_{\Gamma_h^m} \nabla_{\Gamma_h^m} I_h v^m \cdot  \nabla_{\Gamma_h^m}  \phi_h =: \sum_{i=1}^3 L_i(\phi_h) .
\end{align}
If the test function is specifically chosen to be an almost tangential function of the form $I_h\bar T_h^m\phi_h$, then $L_1(I_h\bar T_h^m\phi_h) = L_2(I_h\bar T_h^m\phi_h) =0$ due to the nodal-wise orthogonality. Analogous to \cite[Eqs. (4.42)--(4.45)]{BL24}, $L_3(I_h\bar T_h^m\phi_h)$ can be bounded by using super-approximation results:
\begin{align}\label{eq:L}
	| L_3(I_h\bar T_h^m\phi_h) |
	&\lesssim
	\tau ((1 + \ksm) h^{k+1} +\| \nabla_\Ghsm \ehm \|_{L^2(\Ghsm)} ) \| I_h\bar T_h^m\phi_h \|_{H^1(\Ghsm)}  .
\end{align}

Utilizing the orthogonality between $\Nbhm$ and $\Tbhm$, the following $H^1$ tangential stability estimate was shown in \cite[Eq. (4.59)]{BL24}.
\begin{lemma}\label{lemma:tan_stab}
We have the $H^1$ tangential stability estimate
\begin{align}\label{eq:tan_stab}
	&
	\| X_h^{m+1} - X_h^{m}  - \tau I_h v^m \|_{L^2(\Ghsm)}
	+
	\| \nabla_\Ghsm I_h \Tbhm (X_h^{m+1} - X_h^{m}  - \tau I_h v^m) \|_{L^2(\Ghsm)}  \notag\\
	&\lesssim  (1 + \ksm) \tau h^{k + 1} + \tau \| \nabla_\Ghsm \ehm \|_{L^2(\Ghsm)}  \notag\\
	&\quad+ (1 + h^{-2}\| \nabla_\Ghsm \ehm \|_{L^2(\Ghsm)}) \| I_h \Nbhm (X_h^{m+1} - X_h^{m}  - \tau I_h v^m) \|_{L^2(\Ghsm)} .
\end{align}
	As a consequence of \eqref{eq:geo_rel_v} and \eqref{W1infty-g}, we also have
	\begin{align}\label{eq:tan_stab_e}
		&
		\| \eM - \ehm  - \tau I_h\Tsm v^m \|_{L^2(\Ghsm)}
		+
		\| \nabla_\Ghsm I_h \Tbhm (\eM - \ehm  - \tau I_h \Tsm v^m) \|_{L^2(\Ghsm)}  \notag\\
		&\lesssim \tau (\tau + (1 + \ksm) h^{k+1}) + \tau \| \nabla_\Ghsm \ehm \|_{L^2(\Ghsm)}  \notag\\
		&\quad+(1 + h^{-2}\| \nabla_\Ghsm \ehm \|_{L^2(\Ghsm)})\| I_h \Nbhm (\eM - \ehm  - \tau I_h \Tsm v^m) \|_{L^2(\Ghsm)} .
	\end{align}
\end{lemma}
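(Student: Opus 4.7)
The plan is to adapt the argument used for \cite[Eq.~(4.59)]{BL24}. Setting $\psi_h := X_h^{m+1} - X_h^m - \tau I_h v^m$, the identity \eqref{eq:stiff-iden} reads
$$\int_{\Ghm} \nabla_\Ghm \psi_h \cdot \nabla_\Ghm \phi_h = L_1(\phi_h) + L_2(\phi_h) + L_3(\phi_h) \qquad \forall\, \phi_h \in S_h(\Ghm)^2,$$
and the essential device is to test against the almost-tangential function $\phi_h = I_h \Tbhm \psi_h$. Since $\nbhm \cdot I_h \Tbhm \psi_h = 0$ at every Gauss--Lobatto node, the mass-lumping definition \eqref{eq:lumped_mass} forces $L_1(I_h \Tbhm \psi_h)=0$, and the nodal vanishing of $I_h[(I_h \Tbhm \psi_h \cdot \nbhm)\nbhm]$ forces $L_2(I_h \Tbhm \psi_h)=0$; only $L_3(I_h \Tbhm \psi_h)$ survives, and it is already controlled by \eqref{eq:L}.

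To recover $\|\nabla_\Ghm I_h \Tbhm \psi_h\|_{L^2}$ I would use the nodal decomposition $\psi_h = I_h \Nbhm \psi_h + I_h \Tbhm \psi_h$ to write
$$\int_\Ghm |\nabla_\Ghm I_h \Tbhm \psi_h|^2 = L_3(I_h \Tbhm \psi_h) - \int_\Ghm \nabla_\Ghm I_h \Nbhm \psi_h \cdot \nabla_\Ghm I_h \Tbhm \psi_h.$$
The cross term on the right is the heart of the matter: differentiating $I_h \Nbhm \psi_h$ with the product rule and the super-approximation estimates of Appendix \ref{sec:super} splits it into a principal piece proportional to $\nbhm(\nbhm \cdot \nabla_\Ghm \psi_h)$, which is normal and hence orthogonal to $\nabla_\Ghm I_h \Tbhm \psi_h$ up to a lumped-mass perturbation, plus commutator remainders scaling with $\|\nabla_\Ghm \nbhm\|_{L^\infty}$ and $\|I_h \Nbhm \psi_h\|_{L^2}$. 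An inverse inequality on the remainders, combined with \eqref{eq:n-bbd} and Lemma \ref{lemma:e-blinear} to trade $\Ghm$ for $\Ghsm$, is what generates the prefactor $1 + h^{-2}\|\nabla_\Ghsm \ehm\|_{L^2}$ appearing in front of $\|I_h \Nbhm \psi_h\|_{L^2}$ on the right-hand side of \eqref{eq:tan_stab}.

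For the $L^2$ bound on $\psi_h$ I would exploit mass-lumping orthogonality and Lemma \ref{lemma:lump} to write
$$\|\psi_h\|_{L^2(\Ghsm)}^2 \sim \|I_h \Nbhm \psi_h\|_{L^2(\Ghsm)}^2 + \|I_h \Tbhm \psi_h\|_{L^2(\Ghsm)}^2,$$
and then recover the tangential $L^2$ piece from its gradient via a Poincar\'e-type inequality on the closed curve $\Ghm$, the compatibility mean of $I_h \Tbhm \psi_h$ being controlled by testing \eqref{eq:stiff-iden} against a constant test function and bounding the resulting consistency residual. Finally, \eqref{eq:tan_stab_e} follows from \eqref{eq:tan_stab} by substituting \eqref{eq:geo_rel_v} and absorbing $\tau I_h g^m$ via \eqref{W1infty-g} at a negligible extra cost of $\tau^2$. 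The hardest step is the cross term above: bounding it without losing more than a single $h^{-2}$ factor and without reintroducing the full $H^1$ norm of $\psi_h$ on the right-hand side requires a careful interplay of super-approximation, mass-lumping orthogonality, and the induction hypothesis \eqref{eq:ind_hypo1} on $\|\nabla_\Ghsm \ehm\|$.
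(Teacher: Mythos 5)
The paper does not prove this lemma from scratch: it merely sets up the identity \eqref{eq:stiff-iden}, notes that testing with $I_h\Tbhm\phi_h$ kills $L_1$ and $L_2$ by nodal-wise orthogonality, records the bound \eqref{eq:L} for $L_3$, and then cites \cite[Eq.~(4.59)]{BL24} for the actual estimate. Your proposal reproduces exactly these ingredients, and your cross-term argument — splitting $\int\nabla_\Ghm I_h\Nbhm\psi_h\cdot\nabla_\Ghm I_h\Tbhm\psi_h$ into a leading orthogonal piece plus super-approximation commutators — is the same device the paper later formalizes as Lemma~\ref{lemma:NT_stab_ref}; the algebraic identity $\int_\Ghm|\nabla_\Ghm I_h\Tbhm\psi_h|^2 = L_3(I_h\Tbhm\psi_h)-\int_\Ghm\nabla_\Ghm I_h\Nbhm\psi_h\cdot\nabla_\Ghm I_h\Tbhm\psi_h$ is correct. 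So the $H^1$-gradient part of your plan is consistent with the paper's sketch.

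There is, however, a genuine gap in your route to the $L^2$ bound on $\psi_h$. You propose to control $\|I_h\Tbhm\psi_h\|_{L^2}$ by a Poincar\'e inequality on the closed curve and to pin down the mean by ``testing \eqref{eq:stiff-iden} against a constant test function.'' That last step does not do what you need: with $\phi_h=c$ constant, the left-hand side of \eqref{eq:stiff-iden} and $L_3(c)$ both vanish (since $\nabla_\Ghm c=0$), leaving the identity $\int_\Ghm^h\kappa_h^{m+1}\bar n_h^m\cdot c = \int_\Ghm\nabla_\Ghm X_h^m\cdot\nabla_\Ghm I_h\big[(c\cdot\bar n_h^m)\bar n_h^m\big]$, which constrains a weak projection of $\kappa_h^{m+1}$ but says nothing about $\fint_\Ghm I_h\Tbhm\psi_h$. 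The mean of a nodally tangential finite-element field on a closed curve must instead be controlled by a coercivity/spectral-gap argument — exploiting that the averaged normal rotates around the closed curve, so no constant vector can lie in the image of $I_h\Tbhm$ without paying in the gradient — and that is precisely where the orthogonality of $\Nbhm$ and $\Tbhm$ must be used a second time. As written, this step is missing and the $\|\psi_h\|_{L^2(\Ghsm)}$ term on the left of \eqref{eq:tan_stab} is not accounted for.
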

Lemma \ref{lemma:tan_stab} basically indicates that the $H^1$ norm of the discrete tangential velocity can be controlled by the $L^2$ norm of the discrete normal velocity. However, the $H^1$ tangential stability estimate is not sufficient to conclude the convergence for transport equations:
Notice that the factor $h^{-2}\| \nabla_\Ghsm \ehm \|_{L^2(\Ghsm)}$ on the right-hand side of \eqref{eq:tan_stab}--\eqref{eq:tan_stab_e} is critical for the finite element degree $k=3$ if we are aiming to show $O(h^k)$ convergence rate.
For $O(h^k)$ convergence, we can at best assume $h^{-2}\| \nabla_\Ghsm \ehm \|_{L^2(\Ghsm)} \lesssim h^{k-3-\epsilon}$, for some positive exponent $\epsilon$, in the induction hypothesis (Section \ref{sec:ind-hypo}).
This is a blow-up factor for $k=3$, and it will prevent us from concluding the a priori high-order estimates for the shape regularity.

To eliminate the influence of the critical term $h^{-2}\| \nabla_\Ghsm \ehm \|_{L^2(\Ghsm)}$, our remedy is to derive an $H^2$-version of Lemma \ref{lemma:tan_stab} where we are able to get rid of this unstable factor.
The proof of the high-order tangential stability estimate (Lemma \ref{lemma:tan_stab_H2}) relies on the following intrinsic $H^2$ stability of discrete Laplacian.
\begin{lemma}\label{lemma:H2norm}
	For any function $f_h\in S_h(\Ghm)$, its $H_h^2$ norm (piecewise $H^2$ norm) can be bounded by the discrete $H^2$ norm associated to the discrete Laplacian $\Delta_{\Ghm, h}$, i.e.
	\begin{align}
		\| f_h \|_{H_h^2(\Ghm)}
		\lesssim
		\| f_h \|_{H^1(\Ghm)}
		+
		\| \Delta_{\Ghm, h} f_h \|_{L^2(\Ghm)} ,
	\end{align}
	where $\Delta_{\Ghm, h} f_h\in S_h(\Ghm)$ is defined as the unique finite element function such that 
	\begin{align}
		\int_\Ghm \Delta_{\Ghm, h} f_h g_h 
		=
		-\int_\Ghm \nabla_\Ghm f_h \nabla_\Ghm g_h \notag
	\end{align}
	for any $g_h\in S_h(\Ghm)$.
\end{lemma}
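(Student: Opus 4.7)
The plan is to prove the estimate element by element via a bubble-function duality argument, after reducing the piecewise $H^2$ norm to the $L^2$ norm of the classical piecewise Laplace--Beltrami operator.

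First, on each curved element $K\subset \Gamma_h^m$ the intrinsic identity $|\nabla_K^2 f_h|^2 = (\partial_s^2 f_h)^2 + \kappa_K^2 (\partial_s f_h)^2$ (with $s$ arc length along $K$ and $\kappa_K$ the curvature of $K\subset\R^2$), combined with the uniform bound $\|\kappa_K\|_{L^\infty(K)}\lesssim 1$ from the shape regularity of Section \ref{sec:ind-hypo}, gives
\begin{equation*}
\|f_h\|_{H^2_h(\Gamma_h^m)}^2
\lesssim
\|f_h\|_{H^1(\Gamma_h^m)}^2
+
\sum_{K\subset\Gamma_h^m} \|\Delta_K f_h\|_{L^2(K)}^2,
\end{equation*}
where $\Delta_K f_h := \partial_s^2 f_h|_K$ denotes the classical Laplace--Beltrami restricted to $K$. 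It therefore suffices to establish the element-wise estimate $\|\Delta_K f_h\|_{L^2(K)}^2 \lesssim \|\Delta_{\Gamma_h^m, h}f_h\|_{L^2(K)}^2 + \|\nabla_K f_h\|_{L^2(K)}^2$ uniformly in $K$, and sum.

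Second, I construct a local test function $g_K \in S_h(\Gamma_h^m)$ supported in $\overline K$ and vanishing at the two endpoint nodes of $K$; such $g_K$ live in a nonempty $(k-1)$-dimensional bubble subspace of $S_h(\Gamma_h^m)$ attached to $K$, which is nontrivial under the paper's standing assumption $k\geq 3$. Since $g_K$ vanishes at every node of $\Gamma_h^m$, piecewise integration by parts eliminates all inter-element jump contributions and produces the localization
\begin{equation*}
\int_{\Gamma_h^m}\Delta_{\Gamma_h^m,h}f_h\cdot g_K
= -\int_{\Gamma_h^m}\nabla_{\Gamma_h^m}f_h\cdot \nabla_{\Gamma_h^m}g_K
= \int_K \Delta_K f_h \cdot g_K.
\end{equation*}
I then choose $g_K$ so that its pull-back under the polynomial parametrization $F_K:[0,1]\to K$ has the bubble form $\xi(1-\xi)\cdot q(\xi)$, with $q\in\mathbb P^{k-2}([0,1])$ tuned to the local structure of $\Delta_K f_h$. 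The equivalence of weighted and unweighted $L^2$ inner products on the finite-dimensional space $\mathbb P^{k-2}([0,1])$, combined with the shape-regularity bounds $|F_K'|\sim h$ and $\|F_K\|_{W^{k-1,\infty}}\lesssim 1$, lets me select $q$ so that $\int_K \Delta_K f_h\cdot g_K \gtrsim c\|\Delta_K f_h\|_{L^2(K)}^2 - C\|\nabla_K f_h\|_{L^2(K)}^2$ while $\|g_K\|_{L^2(K)}\lesssim \|\Delta_K f_h\|_{L^2(K)}$. Cauchy--Schwarz, Young's inequality, and summation over $K$ then deliver the lemma.

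The main obstacle is that on a curved element the pull-back $\Delta_K f_h\circ F_K = p''/|F_K'|^2 - p'|F_K'|'/|F_K'|^3$ (with $p:=f_h\circ F_K$) is not polynomial in $\xi$ due to the metric factor $|F_K'|$, so the classical polynomial bubble machinery does not apply verbatim. Handling the curvature-induced cross term $p'|F_K'|'/|F_K'|^3$ requires careful rescaling of the bubble test function to match the local scaling of $|F_K'|$ and a Young-type absorption of the resulting lower-order contribution into the $\|\nabla_K f_h\|_{L^2(K)}$ term; this is where the uniform shape-regularity control of $\|F_K\|_{W^{k-1,\infty}}$ and $\|F_K^{-1}\|_{W^{1,\infty}}$ from the induction hypothesis becomes essential.
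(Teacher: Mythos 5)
Your proposal is correct and takes a genuinely different route from the paper's proof. The paper's argument is global: it lifts $f_h$ to the smooth curve $\Gamma^m$, introduces an auxiliary elliptic problem $-\Delta_{\Gamma^m} f + f = -\Delta_{\Gamma^m,h}f_h^l + f_h^l$ so that $f_h^l$ becomes a Ritz-type projection of $f$, invokes $H^2$ elliptic regularity on $\Gamma^m$ together with the standard Ritz error estimate, and then transfers the bound back to $\Gamma_h^m$ via norm equivalence, inverse inequalities, and the bilinear-form perturbation estimates of Lemmas~\ref{lemma:geo-perturb}--\ref{lemma:e-blinear}. Your argument is local and element-wise: you reduce the piecewise $H^2$ seminorm to $\sum_K\|\partial_s^2 f_h\|_{L^2(K)}^2$ via the Frenet identity for the surface Hessian of a scalar on a planar curve, then localize the weak discrete Laplacian to each element using interior bubble test functions, and close with a scaling and Young's inequality argument. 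The trade-off is roughly this: the paper's route is shorter because it outsources the hard work to classical elliptic regularity and Ritz projection theory, but it requires building the lifted discrete Laplacian on $\Gamma^m$ and comparing $\Delta_{\Gamma_h^m,h}f_h$ with $(\Delta_{\Gamma^m,h}f_h^l)^{-l}$, which costs an additional perturbation estimate; your route is self-contained at the discrete level and avoids the smooth-curve detour entirely, at the cost of tracking metric factors carefully in the reference coordinates.

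Two small points worth tightening in your write-up. First, the bubble space is nontrivial already for $k\ge2$ (dimension $k-1\ge1$), and for $k=1$ the statement is vacuous since $p''\equiv0$ on each element; tying the nontriviality to $k\ge3$ overstates what the construction needs. Second, your claimed bound $\|g_K\|_{L^2(K)}\lesssim\|\Delta_K f_h\|_{L^2(K)}$ is not quite right as stated: with $p:=f_h\circ F_K$ and the normalization $|F_K'|\sim h$, $|F_K'|'\lesssim h^2$, one can arrange a cancellation in $\Delta_K f_h\circ F_K = p''/|F_K'|^2 - p'|F_K'|'/|F_K'|^3$ that makes $\|\Delta_K f_h\|_{L^2(K)}$ small even though $\|p''\|_{L^2}$ (and hence $\|g_K\|_{L^2}$) is not. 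The correct bound, which follows from a reverse-triangle-inequality argument, is $\|g_K\|_{L^2(K)}\lesssim\|\Delta_K f_h\|_{L^2(K)} + \|\nabla_K f_h\|_{L^2(K)}$. This weaker bound still closes the argument through the same Cauchy--Schwarz and Young steps, with the extra gradient contribution absorbed into the $\|f_h\|_{H^1(\Gamma_h^m)}$ term; it is worth stating it in this form to make the absorption transparent.
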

\begin{proof}
	Let $\Delta_{\Gm, h} f_h^l \in S_h(\Gm)$ be the unique finite element function such that 
	\begin{align}
		\int_\Gm \Delta_{\Gm, h} f_h^l g_h^l 
		=
		-\int_\Gm \nabla_\Gm f_h^l \nabla_\Gm g_h^l \notag
	\end{align}
	for any $g_h^l \in S_h(\Gm)$.
	On the smooth curve $\Gm$, we define an auxiliary function $f$ to be the solution to
	\begin{align}\label{eq:aux}
		-\Delta_\Gm f + f = -\Delta_{\Gm, h} f_h^l + f_h^l .
	\end{align}
	By construction, $f_h^l$ is the Ritz-type projection of $f$ and thus satisfies the standard elliptic error estimate
	\begin{align}\label{eq:Ritz-H1}
		\| f_h^l -f \|_{L^2(\Gm)}
		+
		h\| f_h^l -f \|_{H^1(\Gm)}
		\lesssim
		h^2
		\| f \|_{H^2(\Gm)}
		.
	\end{align}
	Taking $L^2$ norm on both sides of \eqref{eq:aux} and using $L^2$ elliptic regularity theory and \eqref{eq:Ritz-H1}, we obtain
	\begin{align}
		\| f \|_{H^2(\Gm)} 
		&\lesssim
		 \| f \|_{H^1(\Gm)} 
		 + 
		 \| \Delta_\Gm f \|_{L^2(\Gm)}
		 \notag\\
		 &\lesssim
		 \| f \|_{H^1(\Gm)} 
		 +
		 \| f_h^l -f \|_{L^2(\Gm)}
		 + 
		 \| \Delta_{\Gm, h} f_h^l \|_{L^2(\Gm)}
		 \notag\\
		 &\lesssim
		 \| f \|_{H^1(\Gm)} 
		 +
		 h^2
		 \| f \|_{H^2(\Gm)}
		 + 
		 \| \Delta_{\Gm, h} f_h^l \|_{L^2(\Gm)}
		 \notag .
	\end{align}
	Upon absorbing $h^2
	\| f \|_{H^2(\Gm)}$ into the left-hand side,
	\begin{align}\label{eq:u-H2}
		\| f \|_{H^2(\Gm)} 
		\lesssim
		\| f \|_{H^1(\Gm)} 
		+ 
		\| \Delta_{\Gm, h} f_h^l \|_{L^2(\Gm)} .
	\end{align}
	Then the piecewise $H^2$ norm of $f_h$ can be bounded as follows:
	\begin{align}
			&\| f_h \|_{H_h^2(\Ghm)}
			\sim
			\| f_h^l \|_{H_h^2(\Gm)}
			\quad\mbox{(norm equivalence)}
			\notag\\
			&\lesssim h^{-1} \| f_h^l -( I_h f)^l \|_{H^1(\Gm)}
			+
			\| (I_h f)^l \|_{H_h^2(\Gm)}
			\quad\mbox{(inverse inequality)}
			\notag\\
			&\lesssim
			\| f \|_{H^2(\Gm)} 
			\quad\mbox{(\eqref{eq:Ritz-H1} and stability of $I_h$)}
			\notag\\
			&\lesssim
			\| f \|_{H^1(\Gm)} + \| \Delta_{\Gm, h} f_h^l \|_{L^2(\Gm)}  \quad\mbox{(	\eqref{eq:u-H2} is used)} . \notag
	\end{align}
	Moreover, using bilinear estimates (cf. Lemma \ref{lemma:geo-perturb} and \ref{lemma:e-blinear}),
	\begin{align}
		&\| \Delta_{\Ghm, h} f_h -  (\Delta_{\Gm, h} f_h^l)^{-l} \|_{L^2(\Ghm)} \notag\\
		&=
		\sup_{\|\phi_h\|_{L^2(\Ghm)}=1}
		\bigg| \int_\Ghm ( \Delta_{\Ghm, h} f_h -  (\Delta_{\Gm, h} f_h^l)^{-l})\cdot \phi_h \bigg|
		\notag\\
		&\leq
		\sup_{\|\phi_h\|_{L^2(\Ghm)}=1}
		\bigg| 
		\int_\Ghm \nabla_{\Ghm} f_h \cdot \nabla_{\Ghm} \phi_h 
		-
		\int_\Gm \nabla_{\Gm} f_h^l \cdot \nabla_{\Gm} \phi_h^l
		\bigg|
		\notag\\
		&\quad+
		\sup_{\|\phi_h\|_{L^2(\Ghm)}=1}
		\bigg| 
		\int_\Ghm (\Delta_{\Gm, h} f_h^l)^{-l} \cdot \phi_h
		-
		\int_\Gm  \Delta_{\Gm, h} f_h^l \cdot \phi_h^l
		\bigg|
		\notag\\
		&\lesssim 
		\sup_{\|\phi_h\|_{L^2(\Ghm)}=1}
		h^{-\frac 12}\big((1 + \ksm)h^{k+1} + \| \nabla_\Ghsm \ehm \|_{L^2(\Ghsm)}\big)
		\notag\\
		&\quad\times
		\big(\| \nabla_\Ghsm f_h \|_{L^2(\Ghsm)} + \| (\Delta_{\Gm, h} f_h^l)^{-l} \|_{L^2(\Ghsm)}\big) \| \nabla_\Ghsm \phi_h \|_{L^2(\Ghsm)} \notag\\
		&\lesssim 
		((1 + \ksm)h^{k-\frac12} + h^{-\frac 32} \| \nabla_\Ghsm \ehm \|_{L^2(\Ghsm)}) \| \nabla_\Ghsm f_h \|_{L^2(\Ghsm)}
		\notag\\
		&\quad+ 
		((1 + \ksm)h^{k+\frac12} + h^{-\frac 12} \| \nabla_\Ghsm \ehm \|_{L^2(\Ghsm)}) \| (\Delta_{\Gm, h} f_h^l)^{-l} \|_{L^2(\Ghsm)}
		\notag\\
		&\lesssim 
		\| \nabla_\Ghsm f_h \|_{L^2(\Ghsm)} 
		+
		((1 + \ksm)h^{k+\frac12} + h) \| (\Delta_{\Gm, h} f_h^l)^{-l} \|_{L^2(\Ghsm)}
		, \notag
	\end{align}
	where in the last line, we have used the induction hypothesis \eqref{eq:ind_hypo1}.
	
	We complete the proof by combining the above two estimates and using the triangle inequality.
\end{proof}
The following orthogonality lemma will help us get the crucial $H^2$ tangential stability estimate.
\begin{lemma}\label{lemma:NT_stab_ref}
	For any $f_h,g_h \in S_h(\Ghm)^2$, we have
	\begin{align}\label{eq:NT_stab}
		&\Big| \int_{\Gamma_h^m} \nabla_{\Gamma_h^m} I_h \Nbhm f_h \cdot  \nabla_{\Gamma_h^m} I_h \Tbhm g_h \Big|
		\lesssim 
		\min\Big\{
		h^{-1} \| f_h \|_{L^2(\Ghsm)} \| g_h \|_{L^2(\Ghsm)}
		,
		\notag\\
		&\hspace{30pt}
		\| f_h \|_{L^2(\Ghsm)} \| g_h \|_{H^1(\Ghsm)} + (1 + h^{-2} \| \nabla_\Ghsm \ehm \|_{L^2(\Ghsm)}) 
		 \| f_h \|_{L^2(\Ghsm)} \| g_h \|_{L^\infty(\Ghsm)}
		\Big\} .
	\end{align}
\end{lemma}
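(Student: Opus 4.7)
The structural heart of the estimate is the pointwise orthogonality $\Nbhm\Tbhm\equiv 0$ of the complementary averaged projections, which forces $(I_h\Nbhm f_h)(x_j)\cdot(I_h\Tbhm g_h)(x_j)=0$ at every node $x_j$. The piecewise polynomial product $(I_h\Nbhm f_h)\cdot(I_h\Tbhm g_h)$ therefore vanishes at all interpolation nodes, even though interpolation does not commute with pointwise multiplication elsewhere. This nodal vanishing is the super-approximation-type gain that improves the naive Cauchy--Schwarz + inverse-inequality bound $h^{-2}\|f_h\|_{L^2}\|g_h\|_{L^2}$ by one power of $h$ in both estimates.

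For the first bound, my plan is to carry out an elementwise integration by parts
\begin{align*}
\int_{\Gamma_h^m}\nabla_{\Gamma_h^m}u_h\cdot\nabla_{\Gamma_h^m}v_h
= -\sum_K\int_K\Delta_{\Gamma_h^m}u_h\cdot v_h \; - \; \sum_p[\partial_s u_h]_p\cdot v_h(p),
\end{align*}
with $u_h=I_h\Nbhm f_h$ and $v_h=I_h\Tbhm g_h$. The crucial observation is that at each node $p$, $u_h(p)$ is parallel to $\bar n_h^m(p)$ while $v_h(p)$ is orthogonal to it. A direct computation on the Lagrange basis representation, combined with $\|\nabla\bar n_h^m\|_{L^\infty}\lesssim 1$ from \eqref{eq:n-bbd}, shows that the component of $[\partial_s u_h]_p$ tangential to $\bar n_h^m(p)$ is of size $\|f_h\|_{L^\infty}$ rather than the naive $h^{-1}\|f_h\|_{L^\infty}$; since only this tangential component survives after dotting with $v_h(p)$, the jump sum gains one factor of $h$. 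A parallel normal/tangential decomposition of $\partial_s u_h\cdot\partial_s v_h$ handles the volume term, and the discrete trace inequalities of Appendix~\ref{sec:disc_norm} together with inverse inequalities close the estimate.

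For the second bound, I would use the complementary identity $I_h\Tbhm g_h=g_h-I_h\Nbhm g_h$, which is valid as FE functions since $\Nbhm(x_j)+\Tbhm(x_j)=I$ at every node, to split
\begin{align*}
\int\nabla I_h\Nbhm f_h\cdot\nabla I_h\Tbhm g_h
=\int\nabla I_h\Nbhm f_h\cdot\nabla g_h
-\int\nabla I_h\Nbhm f_h\cdot\nabla I_h\Nbhm g_h.
\end{align*}
For the first summand, writing $I_h\Nbhm f_h=\Nbhm f_h+R_1$ with $R_1$ a super-approximation remainder from Appendix~\ref{sec:super}, using $\|\Nbhm\|_{W^{1,\infty}}\lesssim 1$ from \eqref{eq:n-bbd}, and moving the derivative onto $g_h$ through $\nabla(\Nbhm f_h)=(\nabla\Nbhm)f_h+\Nbhm\nabla f_h$ yields the $\|f_h\|_{L^2}\|g_h\|_{H^1}$ contribution. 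For the second summand, both sides carry the normal projection; after one integration by parts in which the derivative falls on $g_h$ and is then pointwise bounded by $\|g_h\|_{L^\infty}$, we are left with a coefficient involving $\nabla\Nbhm$ on $\Gamma_h^m$. Transferring this $L^\infty$ bound from $\hat\Gamma_{h,*}^m$ (where it is $O(1)$ by \eqref{eq:n-bbd}) to $\Gamma_h^m$ through Lemma~\ref{lemma:n_bar_app}, together with an $L^2$--$L^\infty$ inverse inequality applied to the surface discrepancy, produces the prefactor $1+h^{-2}\|\nabla_{\hat\Gamma_{h,*}^m}\hat e_h^m\|_{L^2(\hat\Gamma_{h,*}^m)}$.

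The principal obstacle is the limited regularity of $\bar n_h^m$: only first-order $L^\infty$ smoothness is available from \eqref{eq:n-bbd}, so higher-order super-approximation on $\Nbhm$ cannot be invoked, and the nodal orthogonality must carry the full gain at every step. A secondary bookkeeping challenge is the transfer of all bounds between $\Gamma_h^m$ and $\hat\Gamma_{h,*}^m$ with constants independent of $\kappa_{*,m}$, which is precisely what forces the $h^{-2}\|\nabla\hat e_h^m\|_{L^2}$ correction accompanying the $\|g_h\|_{L^\infty}$ term. The detailed argument is presented in Appendix~\ref{sec:appndix_tan_stab}.
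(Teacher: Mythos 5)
Your plan diverges from the paper's argument and the second‑bound half has a genuine gap. The paper (Appendix~\ref{sec:appndix_tan_stab}) never works directly on $\Gamma_h^m$; it transfers the bilinear form to the smooth curve $\Gamma^m$ via Lemmas~\ref{lemma:geo-perturb} and \ref{lemma:e-blinear}, strips the interpolants by super-approximation, then replaces $\Nbhm$ by $N^m$ and $\Tbhm$ by $T^m$ one at a time using Lemma~\ref{lemma:n_bar_app}. The point of doing it that way is to preserve the complementary normal/tangential structure at every step, so that at the end one is facing
\begin{align*}
\int_{\Gamma^m}\nabla_{\Gamma^m}\bigl(N^m(\Nbhm f_h)^l\bigr)\cdot\nabla_{\Gamma^m}\bigl(T^m(\Tbhm g_h)^l\bigr),
\end{align*}
where the term $\int N^m\nabla(\cdot)\cdot T^m\nabla(\cdot)$ drops out identically because $N^m T^m=0$ pointwise, and the remaining product-rule terms all carry a bounded $\nabla N^m$ or $\nabla T^m$ factor, so at most one of $f_h,g_h$ is differentiated after one integration by parts. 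Both branches of the $\min$ in \eqref{eq:NT_stab} come out of the same chain via two different H\"older splittings inside \eqref{eq:tan_stab3}.

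The decomposition you propose for the second bound, $I_h\Tbhm g_h=g_h-I_h\Nbhm g_h$, destroys this structure on both summands. The first summand is
\begin{align*}
\int_{\Gamma_h^m}\nabla_{\Gamma_h^m}I_h\Nbhm f_h\cdot\nabla_{\Gamma_h^m}g_h,
\end{align*}
where $g_h$ no longer carries any tangential projection; writing $\nabla(\Nbhm f_h)=(\nabla\Nbhm)f_h+\Nbhm\nabla f_h$ still leaves the piece $\int\Nbhm\nabla f_h\cdot\nabla g_h$, which cannot be bounded by $\|f_h\|_{L^2}\|g_h\|_{H^1}$ without integrating by parts and producing $\Delta_{\Gamma_h^m}g_h$, a quantity not controlled by $\|g_h\|_{H^1}$. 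The second summand, $\int\nabla I_h\Nbhm f_h\cdot\nabla I_h\Nbhm g_h$, is a normal--normal pairing with no orthogonality to exploit at all: the principal part $N^m\nabla f\cdot N^m\nabla g$ does not vanish the way $N^m\nabla(\cdot)\cdot T^m\nabla(\cdot)$ does, so this term is generically of size $\|f_h\|_{H^1}\|g_h\|_{H^1}\sim h^{-2}\|f_h\|_{L^2}\|g_h\|_{L^2}$, two orders too big. The entire gain in the lemma is the cancellation between the normal and tangential projections, and your splitting discards it. Your first-bound sketch (elementwise integration by parts on $\Gamma_h^m$, exploiting nodal orthogonality to argue that the $\bar n_h^m$-tangential component of the jump $[\partial_s u_h]_p$ is $O(\|f_h\|_{L^\infty})$ rather than $O(h^{-1}\|f_h\|_{L^\infty})$) is a genuinely different route that could in principle work, but it rests entirely on that one unverified claim about the Lagrange-basis jump, and in the paper the first bound is simply the second bound with $\|g_h\|_{H^1}\lesssim h^{-1}\|g_h\|_{L^2}$ and $\|g_h\|_{L^\infty}\lesssim h^{-1/2}\|g_h\|_{L^2}$ applied, so it is not a separate argument at all.
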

\begin{proof}
	See Appendix \ref{sec:appndix_tan_stab}.
\end{proof}
\begin{lemma}\label{lemma:tan_stab_H2}
	The following $H^2$ tangential stability estimate holds:
	\begin{align}\label{eq:tan_stab_H2}
		&\| \nabla_\Ghsm^2 I_h \Tbhm (X_h^{m+1} - X_h^{m}  - \tau I_h v^m) \|_{L^2(\Ghsm)}  \notag\\
		&\lesssim (1 + \ksm) \tau h^{k} + h^{-1}\tau \| \nabla_\Ghsm \ehm \|_{L^2(\Ghsm)}  \notag\\
		&\quad+ h^{-1} \| I_h \Nbhm (X_h^{m+1} - X_h^{m}  - \tau I_h v^m) \|_{L^2(\Ghsm)}  .
	\end{align}
	Moreover, using the conversion formula \eqref{eq:geo_rel_v}, we get
	\begin{align}\label{eq:tan_stab_H2_e}
		&\| \nabla_\Ghsm^2 I_h \Tbhm (\eM - \ehm  - \tau I_h \Tsm v^m) \|_{L^2(\Ghsm)}  \notag\\
		&\lesssim h^{-1} \tau (\tau + (1 + \ksm)  h^{k+1}) + h^{-1}\tau \| \nabla_\Ghsm \ehm \|_{L^2(\Ghsm)}  \notag\\
		&\quad+ h^{-1} \| I_h \Nbhm (\eM - \ehm  - \tau I_h \Tsm v^m) \|_{L^2(\Ghsm)}  .
	\end{align}
\end{lemma}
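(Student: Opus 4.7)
The plan is to apply Lemma \ref{lemma:H2norm} to $f_h = I_h\Tbhm w_h$ with $w_h := X_h^{m+1}-X_h^m - \tau I_h v^m$, reducing the task to bounding the piecewise $H^1$ norm and the discrete-Laplacian norm. The $H^1$ part $\|I_h\Tbhm w_h\|_{H^1(\Ghm)}$ is directly supplied by the tangential estimate \eqref{eq:tan_stab} of Lemma \ref{lemma:tan_stab}; under the induction hypothesis \eqref{eq:ind_hypo1} one has $1+h^{-2}\|\nabla_\Ghsm \ehm\|_{L^2(\Ghsm)}\lesssim h^{-1/2}\leq h^{-1}$, so this contribution already fits inside the target right-hand side of \eqref{eq:tan_stab_H2}.

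For the core estimate I would invoke the dual characterization
\begin{align*}
\|\Delta_{\Ghm,h} I_h\Tbhm w_h\|_{L^2(\Ghm)} = \sup_{\|g_h\|_{L^2(\Ghm)}=1}\left|\int_\Ghm \nabla_\Ghm I_h\Tbhm w_h\cdot \nabla_\Ghm g_h\right|.
\end{align*}
For a unit test function $g_h$ I would split $g_h = I_h\Tbhm g_h + I_h\Nbhm g_h$ and decompose the pairing accordingly. The ``tangential--normal'' piece $\int \nabla I_h\Tbhm w_h\cdot \nabla I_h\Nbhm g_h$ is bounded by the first branch of Lemma \ref{lemma:NT_stab_ref} by $h^{-1}\|w_h\|_{L^2(\Ghsm)}$. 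For the ``tangential--tangential'' piece, I would rewrite $I_h\Tbhm w_h = w_h - I_h\Nbhm w_h$ and test the stiffness identity \eqref{eq:stiff-iden} with $\phi_h=I_h\Tbhm g_h$; the curvature term $L_1$ and the stabilization term $L_2$ vanish by the nodal orthogonality $\bar n_h^m\cdot I_h\Tbhm g_h|_{\text{node}}=0$, leaving $L_3(I_h\Tbhm g_h)$, which \eqref{eq:L} together with the inverse bound $\|I_h\Tbhm g_h\|_{H^1(\Ghsm)}\lesssim h^{-1}$ controls by $(1+\ksm)\tau h^k + h^{-1}\tau \|\nabla_\Ghsm \ehm\|_{L^2(\Ghsm)}$; the remaining cross contribution $\int \nabla I_h\Nbhm w_h\cdot \nabla I_h\Tbhm g_h$ is again absorbed by Lemma \ref{lemma:NT_stab_ref} (first branch) into $h^{-1}\|w_h\|_{L^2(\Ghsm)}$.

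Combining these yields $\|\Delta_{\Ghm,h} I_h\Tbhm w_h\|_{L^2(\Ghm)}\lesssim (1+\ksm)\tau h^k + h^{-1}\tau\|\nabla_\Ghsm\ehm\|_{L^2(\Ghsm)} + h^{-1}\|w_h\|_{L^2(\Ghsm)}$, and I would finish by absorbing $h^{-1}\|w_h\|_{L^2}$ into $h^{-1}\|I_h\Nbhm w_h\|_{L^2}$ via $\|w_h\|_{L^2}\leq \|I_h\Tbhm w_h\|_{L^2}+\|I_h\Nbhm w_h\|_{L^2}$ together with the $L^2$-tangential bound of Lemma \ref{lemma:tan_stab} and the induction hypothesis. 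The counterpart \eqref{eq:tan_stab_H2_e} for $\eM-\ehm - \tau I_h \Tsm v^m$ then follows from the conversion formula \eqref{eq:geo_rel_v} and the Taylor estimate \eqref{W1infty-g} on $g^m$. The main obstacle is precisely this last absorption: a naive use of Lemma \ref{lemma:tan_stab} for $\|w_h\|_{L^2}$ threatens to produce a coefficient $h^{-3/2}$ on the normal part, and tightening it to the stated $h^{-1}$ requires using systematically the \emph{first}, unconditional branch of Lemma \ref{lemma:NT_stab_ref} throughout, so that the critical prefactor $(1+h^{-2}\|\nabla_\Ghsm \ehm\|_{L^2})$ never contaminates the cross-term estimates and is confined to a single application of Lemma \ref{lemma:tan_stab} that the induction hypothesis can afford.
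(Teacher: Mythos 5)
Your overall architecture is correct and matches the paper: apply Lemma \ref{lemma:H2norm} to $I_h\Tbhm w_h$, use the dual characterization of $\|\Delta_{\Ghm,h}I_h\Tbhm w_h\|_{L^2}$, split the test function into normal and tangential parts, invoke the stiffness identity \eqref{eq:stiff-iden} for the tangential--tangential piece (with $L_1=L_2=0$), bound $L_3$ via \eqref{eq:L}, and handle the two remaining cross terms via Lemma \ref{lemma:NT_stab_ref}. However, your choice of branch of Lemma \ref{lemma:NT_stab_ref} for the term $\int\nabla I_h\Tbhm w_h\cdot\nabla I_h\Nbhm g_h$ is wrong, and your diagnosis in the final paragraph is exactly backwards.

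For the cross term $\int\nabla I_h\Nbhm w_h\cdot\nabla I_h\Tbhm g_h$ (normal part of $w_h$ paired with tangential test), the first branch applied with $f_h\leftarrow I_h\Nbhm w_h$ gives $h^{-1}\|I_h\Nbhm w_h\|_{L^2}$ \emph{directly}; no further use of Lemma \ref{lemma:tan_stab} is needed. The problem is the other term, $\int\nabla I_h\Tbhm w_h\cdot\nabla I_h\Nbhm g_h$ (tangential part of $w_h$), which is what you call the ``tangential--normal'' piece. Here the first branch yields $h^{-1}\|I_h\Tbhm w_h\|_{L^2}$ (or $h^{-1}\|w_h\|_{L^2}$), and converting this to the target $\|I_h\Nbhm w_h\|_{L^2}$ unavoidably requires Lemma \ref{lemma:tan_stab}, which carries the factor $(1+h^{-2}\|\nabla_\Ghsm\ehm\|_{L^2(\Ghsm)})$. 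Under the induction hypothesis \eqref{eq:ind_hypo1} this factor is $\sim h^{-1/2}$, so the total coefficient is $h^{-1}\cdot h^{-1/2}=h^{-3/2}$, which is strictly worse than the claimed $h^{-1}$; the exponent $2.5$ in \eqref{eq:ind_hypo1} is not strong enough to save this. The paper instead applies the \emph{second} branch of Lemma \ref{lemma:NT_stab_ref} to this term, obtaining $\|I_h\Tbhm\delta X_h^m\|_{H^1}+(1+h^{-2}\|\nabla_\Ghsm\ehm\|_{L^2})\|I_h\Tbhm\delta X_h^m\|_{L^\infty}$; after the one-dimensional Sobolev embedding $\|\cdot\|_{L^\infty}\lesssim\|\cdot\|_{H^1}$ and another application of Lemma \ref{lemma:tan_stab}, the net coefficient is $(1+h^{-2}\|\nabla_\Ghsm\ehm\|_{L^2})^2$, which under the induction hypothesis is $\lesssim h^{-1}$ because $(h^{-1/2})^2=h^{-1}$. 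So the correct remedy is the opposite of what you propose: the critical prefactor must be allowed to appear \emph{twice} (once from the second branch, once from Lemma \ref{lemma:tan_stab}), rather than multiplied against the $h^{-1}$ of the first branch.
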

\begin{proof}
	Let $P(\Ghm):L^2(\Ghm)\rightarrow S_h(\Ghm)$ be the $L^2$ projection and $\delta X_h^m := X_h^{m+1} - X_h^{m}  - \tau I_h v^m \in S_h(\Ghm)^2$.
	Applying Lemma \ref{lemma:NT_stab_ref} with $(f_h, g_h) = (I_h \bar N_h^m P(\Ghm)\phi, I_h \bar T_h^m \delta X_h^m)$ and 
	$(f_h, g_h) = (I_h \bar N_h^m \delta X_h^m, I_h \bar T_h^m P(\Ghm)\phi)$ respectively, we obtain
	\begin{align}\label{eq:H2-tan-stab}
		&\| \Delta_{\Ghm,h} I_h \Tbhm\delta X_h^m \|_{L^2(\Ghm)}
		\notag\\
		&=
		\sup_{\| \phi \|_{L^2(\Ghm)}=1}\bigg| \int_\Ghm \Delta_{\Ghm,h} I_h \Tbhm\delta X_h^m \cdot \phi \bigg|
		\notag\\
		&=
		\sup_{\| \phi \|_{L^2(\Ghm)}=1}\bigg| \int_\Ghm \nabla_\Ghm I_h \Tbhm\delta X_h^m \cdot \nabla_\Ghm P_h(\Ghm) \phi \bigg|
		\notag\\
		&\leq
		\sup_{\| \phi \|_{L^2(\Ghm)}=1}\bigg| \int_\Ghm \nabla_\Ghm I_h \Tbhm\delta X_h^m \cdot \nabla_\Ghm I_h\Nbhm P_h(\Ghm) \phi \bigg|
		\notag\\
		&\quad+
		\sup_{\| \phi \|_{L^2(\Ghm)}=1}\bigg| \int_\Ghm \nabla_\Ghm I_h \Tbhm\delta X_h^m \cdot \nabla_\Ghm I_h\Tbhm P_h(\Ghm) \phi \bigg|
		\notag\\
		&\leq
		\sup_{\| \phi \|_{L^2(\Ghm)}=1}\bigg| \int_\Ghm \nabla_\Ghm I_h \Tbhm\delta X_h^m \cdot \nabla_\Ghm I_h\Nbhm P_h(\Ghm) \phi \bigg|
		\notag\\
		&\quad+
		\sup_{\| \phi \|_{L^2(\Ghm)}=1}\bigg| \int_\Ghm \nabla_\Ghm I_h \Nbhm\delta X_h^m \cdot \nabla_\Ghm I_h\Tbhm P_h(\Ghm) \phi \bigg|
		\notag\\
		&\quad+
		\sup_{\| \phi \|_{L^2(\Ghm)}=1}
		\bigg| \sum_{i=1}^3 L_i\bigg(I_h\Tbhm P_h(\Ghm) \phi\bigg) \bigg|
		\qquad\mbox{(\eqref{eq:stiff-iden} is used)}
		\notag\\
		&\lesssim
		\| I_h \Tbhm \delta X_h^m \|_{H^1(\Ghsm)}
		+ (1 + h^{-2} \| \nabla_\Ghsm \ehm \|_{L^2(\Ghsm)}) \| I_h \Tbhm \delta X_h^m \|_{L^\infty(\Ghsm)}
		\notag\\
		&\quad
		+
		h^{-1} \| I_h \Nbhm \delta X_h^m \|_{L^2(\Ghsm)} 
		+
		h^{-1}\tau  ((1 + \ksm) h^{k +1} + \| \nabla_\Ghsm \ehm \|_{L^2(\Ghsm)})
		 \\
		&\hspace{139pt}\mbox{(Lemma \ref{lemma:NT_stab_ref} and \eqref{eq:L} are used)} .
		\notag
	\end{align}
	The $H_h^2$ norm of the tangential motion can be bounded as follows:
	\begin{align}\label{eq:H2-tan-stab2}
		&\| I_h \Tbhm\delta X_h^m \|_{H_h^2(\Ghm)}
		\lesssim
		\| I_h \Tbhm\delta X_h^m \|_{H^1(\Ghm)}
		+
		\| \Delta_{\Ghm, h} I_h \Tbhm\delta X_h^m \|_{L^2(\Ghm)}
		\notag\\
		&\hspace{239pt}\mbox{(Lemma \ref{lemma:H2norm} is used)}
		\notag\\
		&\lesssim
		\| I_h \Tbhm \delta X_h^m \|_{H^1(\Ghsm)}
		+ (1 + h^{-2} \| \nabla_\Ghsm \ehm \|_{L^2(\Ghsm)}) \| I_h \Tbhm \delta X_h^m \|_{L^\infty(\Ghsm)}
		\notag\\
		&\quad
		+
		h^{-1} \| I_h \Nbhm \delta X_h^m \|_{L^2(\Ghsm)} 
		+
		h^{-1}\tau  ((1 + \ksm) h^{k +1} + \| \nabla_\Ghsm \ehm \|_{L^2(\Ghsm)})
		\notag\\
		&\hspace{239pt}\mbox{(\eqref{eq:H2-tan-stab} is used)}
		\notag\\
		&\lesssim
		\tau (h^{-1} + h^{-2} \| \nabla_\Ghsm \ehm \|_{L^2(\Ghsm)}) ((1 + \ksm) h^{k +1} + \| \nabla_\Ghsm \ehm \|_{L^2(\Ghsm)})
		\notag\\
		&\quad+
		(h^{-1} + h^{-2} \| \nabla_\Ghsm \ehm \|_{L^2(\Ghsm)} + h^{-4} \| \nabla_\Ghsm \ehm \|_{L^2(\Ghsm)}^2) \| I_h \Nbhm \delta X_h^m \|_{L^2(\Ghsm)}
		\notag\\
		&\hspace{239pt}\mbox{(Lemma \ref{lemma:tan_stab} is used)}
		\notag\\
		&\lesssim
		h^{-1} \tau ((1 + \ksm) h^{k +1} + \| \nabla_\Ghsm \ehm \|_{L^2(\Ghsm)})
		+
		h^{-1} \| I_h \Nbhm \delta X_h^m \|_{L^2(\Ghsm)}
		,
	\end{align}
	where in the last line, we have used the induction hypothesis \eqref{eq:ind_hypo1} to ensure the boundedness $h^{-2} \| \nabla_\Ghsm \ehm \|_{L^2(\Ghsm)} + h^{-4} \| \nabla_\Ghsm \ehm \|_{L^2(\Ghsm)}^2 \lesssim h^{-1}$. 
	The proof is complete.
\end{proof}
\begin{remark}\upshape\label{rmk:k3}
	In Lemma \ref{lemma:tan_stab_H2}, we have got rid of the annoyingly unstable factor, i.e. $h^{-2} \| \nabla_\Ghsm \ehm \|_{L^2(\Ghsm)}$, appearing in the $H^1$ tangential stability estimate (Lemma \ref{lemma:tan_stab}). This allows us to prove the stability and convergence for the critical finite element degree $k=3$.
\end{remark}

\subsection{Estimates for $\delta \ehm$}\label{sec:bbd_vel} 


Let the error displacement $\delta \ehm := \eM - \ehm - \tau I_h \Tsm v^m \in S_h(\Ghsm)^2$ with $v^m = v(t_m)$ being the solution to \eqref{eq:ell_sys_v10}--\eqref{eq:ell_sys_v20}, and we derive
\begin{align}\label{eq:err1}
	& \int_{\hat\Gamma_{h, *}^{m}}^h \frac{\delta \ehm}{\tau} \cdot \bar n_{h, *}^m\, \frac{\delta \ehm}{\tau} \cdot \bar n_{h, *}^m \notag\\
	&= - \int_{\hat\Gamma_{h, *}^{m}}^h I_h\Tsm v^m \cdot \bar n_{h, *}^m \,   \frac{\delta \ehm}{\tau} \cdot \bar n_{h, *}^m 
	 + 
	 \int_{\hat\Gamma_{h, *}^{m}}^h \frac{e_h^{m+1}-\hat e_h^m}{\tau} \cdot \bar n_{h, *}^m\, \frac{\delta \ehm}{\tau} \cdot \bar n_{h, *}^m \notag\\ 
	&= - \int_{\hat\Gamma_{h, *}^{m}}^h I_h\Tsm v^m \cdot \bar n_{h, *}^m \,   \frac{\delta \ehm}{\tau}  \cdot \bar n_{h, *}^m 
	- d^m \Big(I_h \Big(\frac{\delta \ehm}{\tau} \cdot \nbhsm \Big) \Big) 
	- J^m \Big(I_h \Big(\frac{\delta \ehm}{\tau} \cdot \nbhsm \Big) \Big) ,
\end{align}
where in the second identity we have used the error equation \eqref{eq:err_eq2} with test function $\chi_h = I_h \Big(\frac{\delta \ehm}{\tau} \cdot \nbhsm \Big)\in S_h(\Ghsm)$.
Due to the nodal-wise orthogonality, Lemma \ref{lemma:lump} and the consistency estimates for normal vector (Lemma \ref{lemma:n_bar_app}),
\begin{align}
	&\Big| \int_{\hat\Gamma_{h, *}^{m}}^h \frac{\delta \ehm}{\tau} \cdot \bar n_{h, *}^m\,  I_h\Tsm v^m \cdot \bar n_{h, *}^m \Big|
	\notag\\
	&=
	\Big| \int_{\hat\Gamma_{h, *}^{m}}^h \frac{\delta \ehm}{\tau} \cdot \bar n_{h, *}^m\,  I_h(\Tsm - \Tbhsm) v^m \cdot \bar n_{h, *}^m \Big|
	 \notag\\
	&\lesssim \| I_h\Tsm - \bar T_{h,*}^m \|_{L^2_h(\Ghsm)} \Big\| \frac{\delta \ehm}{\tau} \cdot \bar n_{h, *}^m \Big\|_{L^2_h(\Ghsm)} \notag\\
	&\lesssim {{(1 + \ksm)}}h^k \Big\| \frac{\delta \ehm}{\tau} \cdot \bar n_{h, *}^m \Big\|_{L^2_h(\Ghsm)} ,
	\notag
\end{align}
where the discrete norm $L_h^2$ is defined in Appendix \ref{sec:disc-norm} and we have used Lemma \ref{lemma:lump}.
The estimates for linear forms (Eqs. \eqref{eq:cons_err} and \eqref{eq:J_ele}) give
\begin{align}
	\Big| d^m\Big(I_h \Big(\frac{\delta \ehm}{\tau} \cdot \nbhsm \Big) \Big) \Big|
	&\lesssim (\tau + (1 + \ksm) \tau h^{k}) \Big\| I_h \Big(\frac{\delta \ehm}{\tau} \cdot \nbhsm \Big) \Big\|_{L^2(\Ghsm)}
	, \notag\\
	\Big| J^m \Big(I_h \Big(\frac{\delta \ehm}{\tau} \cdot \nbhsm \Big) \Big) \Big|
	&\lesssim \| \ehm \|_{H^1(\Ghsm)} \Big\| I_h \Big(\frac{\delta \ehm}{\tau} \cdot \nbhsm \Big) \Big\|_{L^2(\Ghsm)} . \notag
\end{align}
Plugging the above estimates into the right-hand side of \eqref{eq:err1} and using the norm equivalence in Lemma \ref{lemma:lump}, we obtain
\begin{align}
	\| \delta \ehm \cdot \bar n_{h,*}^m \|_{L_{h}^2(\Ghsm)} \lesssim \tau (\tau + (1 + \ksm) h^{k}) + \tau \| \ehm \|_{H^1(\Ghsm)} . \notag
\end{align}
Since, by definition, $\Nbhsm= \frac{\bar n^m_{h,*}}{|\bar n^m_{h,*}|} (\frac{\bar n^m_{h,*}}{|\bar n^m_{h,*}|})^\top$ and at each finite element nodes $| |\nbhsm| -1 | \lesssim h^{2k}$ (Eq. \eqref{eq:nhs_bar_len}).
Then we have the norm equivalence relation $\| I_h \Nbhsm f_h \|_{L^2(\Ghsm)} \sim \| f_h\cdot \nbhsm \|_{L_h^2(\Ghsm)}$ for all $f_h\in S_h(\Ghsm)^2$. Consequently,
\begin{align}\label{eq:vel_N1}
	\| I_h \Nbhsm \delta \ehm \|_{L^2(\Ghsm)} 
	&\lesssim \tau (\tau + (1 + \ksm) h^{k}) + \tau \| \ehm \|_{H^1(\Ghsm)}  .
\end{align}
Combining \eqref{eq:tan_stab_e} and \eqref{eq:vel_N1}, we get
\begin{align}
	&
	\| \delta\ehm \|_{L^2(\Ghsm)} 
	+
	\| \nabla_\Ghsm I_h \Tsm \delta\ehm \|_{L^2(\Ghsm)}  \notag\\
	&\leq 
		\| \delta\ehm \|_{L^2(\Ghsm)} 
	+
	\| \nabla_\Ghsm I_h \Tbhm \delta\ehm \|_{L^2(\Ghsm)} 
	+
	\| \nabla_\Ghsm I_h (\Tbhm - \Tsm) \delta\ehm \|_{L^2(\Ghsm)} 
	\notag\\
	&\lesssim 
	\tau (\tau + (1 + \ksm) h^{k+1}) + \tau \| \nabla_\Ghsm \ehm \|_{L^2(\Ghsm)}
	\notag\\
	&\quad+
	(1 + h^{-2} \| \nabla_\Ghsm \ehm \|_{L^2(\Ghsm)}) (\| I_h \Nbhsm \delta\ehm \|_{L^2(\Ghsm)} + \| I_h (\Nbhm - \Nbhsm) \delta\ehm \|_{L^2(\Ghsm)} )
	\notag\\
	&\quad+
	h^{-1}
	\| \nbhm - I_h\nsm \|_{L^\infty(\Ghsm)} 
	\|  \delta\ehm \|_{L^2(\Ghsm)} 
	\notag\\
	&\lesssim 
	\tau (\tau + (1 + \ksm) h^{k+1}) + \tau \| \nabla_\Ghsm \ehm \|_{L^2(\Ghsm)}
	\notag\\
	&\quad+
	\tau (1 + h^{-2} \| \nabla_\Ghsm \ehm \|_{L^2(\Ghsm)}) \big((\tau + (1 + \ksm) h^{k}) + \| \ehm \|_{H^1(\Ghsm)} \big)
	\notag\\
	&\quad+
	(1 + h^{-1} + h^{-2} \| \nabla_\Ghsm \ehm \|_{L^2(\Ghsm)})
	\times \notag\\
	&\qquad
	h^{-1/2}\big((1 + \ksm) h^k + \|\nabla_\Ghsm \ehm \|_{L^2(\Ghsm)}\big)
	\|  \delta\ehm \|_{L^2(\Ghsm)} 
	, \notag
\end{align}
where, in the last inequality, we have applied the following estimates, obtained by using Lemma \eqref{lemma:n_bar_app}, the inverse inequality and the induction hypothesis \eqref{eq:ind_hypo1},
	\begin{align}
		\begin{aligned}
			\| \nbhm - I_h \nsm \|_{L^\infty(\Ghsm)} &\lesssim 
			h^{-1/2}\big((1 + \ksm) h^k + \|\nabla_\Ghsm \ehm \|_{L^2(\Ghsm)}\big) , 
			\\
			\| I_h (\Nbhm - \Nbhsm) \|_{L^\infty(\Ghsm)} &\lesssim
			\| \nbhm - I_h \nsm \|_{L^\infty(\Ghsm)}
			(\| \nbhm \|_{L^\infty(\Ghsm)} 
			+
			\| \nbhsm \|_{L^\infty(\Ghsm)}
			)
			\notag\\
			&\lesssim
			h^{-1/2}\big((1 + \ksm) h^k + \|\nabla_\Ghsm \ehm \|_{L^2(\Ghsm)}\big) 
			. 
		\end{aligned}
	\end{align}
In view of the induction hypothesis \eqref{eq:ind_hypo1}, the last term on the right-hand side can be absorbed into the left-hand side. Then, we have
\begin{align}\label{eq:vel_T}
	&
	\| \eM - \ehm  - \tau I_h \Tsm v^m \|_{L^2(\Ghsm)} 
	+
	\| \nabla_\Ghsm I_h \Tsm (\eM - \ehm  - \tau I_h \Tsm v^m) \|_{L^2(\Ghsm)}  \notag\\
	&\lesssim 
	{{ \tau (1 + h^{-2} \| \nabla_\Ghsm \ehm \|_{L^2(\Ghsm)}) \big((\tau + (1 + \ksm) h^{k}) + \| \ehm \|_{H^1(\Ghsm)} \big) }}
	.
\end{align}
Similarly, from the $H^2$ tangential stability estimate \eqref{eq:tan_stab_H2_e},
\begin{align}\label{eq:vel_T_H2}
	&\| I_h \Tsm (\eM - \ehm  - \tau I_h \Tsm v^m) \|_{H_h^2(\Ghsm)}  \notag\\
	&\lesssim 
	h^{-1} \tau  (\tau + (1 + \ksm) h^{k}) + h^{-1} \tau \| \ehm \|_{H^1(\Ghsm)} .
\end{align}
The induction hypothesis \eqref{eq:ind_hypo1}, together with \eqref{eq:vel_N1} and \eqref{eq:vel_T}, implies the boundedness of $\delta\ehm$ under $W^{1,\infty}$ norm:
\begin{align}\label{eq:de-bbd}
	&\| \delta\ehm \|_{W^{1,\infty}(\Ghsm)} 
	\notag\\
	&\lesssim
	h^{-3/2} \| I_h \Nsm \delta\ehm \|_{L^{2}(\Ghsm)} 
	+
	h^{-1/2} \| I_h \Tsm \delta\ehm \|_{H^{1}(\Ghsm)} 
	\notag\\
	&\lesssim
	h^{-3/2} \tau \big((\tau + (1 + \ksm) h^{k}) + \| \ehm \|_{H^1(\Ghsm)} \big)
	\notag\\
	&\quad+
	h^{-1/2}
	\tau (1 + h^{-2} \| \nabla_\Ghsm \ehm \|_{L^2(\Ghsm)}) \big((\tau + (1 + \ksm) h^{k}) + \| \ehm \|_{H^1(\Ghsm)} \big)
	\notag\\
	&\lesssim \tau . 
\end{align}
Since
$\eM = \delta \ehm + \ehm + \tau I_h \Tsm v^m$ and $\delta \ehm$ is basically a higher order error term, we can use \eqref{eq:de-bbd} to estimate $\eM$:
\begin{align}
	\| \eM \|_{L^2(\Ghsm)} 
	&\leq \| \ehm \|_{L^2(\Ghsm)} + \|  \delta \ehm \|_{L^2(\Ghsm)}
	+
	 \| \tau I_h \Tsm v^m \|_{L^2(\Ghsm)}
	  \notag\\
	&\lesssim \tau + \| \ehm \|_{L^2(\Ghsm)} 
	,
	\label{eq:e_NT}\\
	\| \eM \|_{H^1(\Ghsm)} 
	&\leq \| \ehm \|_{H^1(\Ghsm)} + \|  \delta \ehm \|_{H^1(\Ghsm)}
	+
	\| \tau I_h \Tsm v^m \|_{H^1(\Ghsm)}
	\notag\\
	&\lesssim \tau + \| \ehm \|_{H^1(\Ghsm)}  .
	\label{eq:e_NT2}
\end{align}
%
Dealing with $\ehM$, we have
\begin{align}\label{eq:ehM-L2}
	\| \ehM \|_{L^2(\Ghsm)}
	&\lesssim
	\| \eM \|_{L^2(\Ghsm)} + \| \eM \|_{L^2(\Ghsm)}\| \eM \|_{L^\infty(\Ghsm)}
	\notag\\
	&\hspace{100pt}\mbox{(\eqref{eq:geo_rel_1}--\eqref{eq:geo_rel_2} are used)}
	\notag\\
	&\lesssim
	\tau + \| \ehm \|_{L^2(\Ghsm)} 
	\quad\mbox{(\eqref{eq:e_NT} and \eqref{eq:ind_hypo1} are used)} .
\end{align}

\subsection{Refined estimates for linear forms}\label{sec:refined}

In this subsection, we are going to derive the core $L^2$ stability of the linear form $J^m$ defined in \eqref{def-Jm-phi}.
We first define the numerical displacement $\delta \tilde X_h^m := X_h^{m+1} - X_h^{m}  - \tau I_h u(t_m) |_\Ghm \in S_h(\Ghsm)^2$.
From \eqref{W1infty-g}, \eqref{eq:geo_rel_v} and \eqref{eq:de-bbd}, we get the boundedness
\begin{align}\label{eq:dtX-bbd}
	\| \delta \tilde X_h^m \|_{W^{1,\infty}{(\Ghsm)}}
	=
	\| \delta\hat e_h^m + \tau I_h g^m + \tau I_h v^m - \tau I_h (u^m|_\Ghm)
	\|_{W^{1,\infty}{(\Ghsm)}}
	\lesssim
	\tau .
\end{align}
Consider the family of linearly interpolated intermediate curves $\hat\Gamma_{h,\theta}^{m}=(1-\theta) \hat\Gamma_{h,*}^{m}+\theta \Gamma_{h}^{m}$, parametrized by $\theta\in[0,1]$, and we denote the piecewise unit normal vector on $\hat\Gamma_{h,\theta}^{m}$ by $\hat n_{h,\theta}^m$.
The parametrized curve $\hat\Gamma_{h,\theta}^{m}$ moves with a constant velocity $\hat e_{h}^m$ as the parameter $\theta$ increases, and any finite element function $v_h$ with a fixed nodal vector independent of $\theta\in[0,1]$ has the transport property $\partial_\theta^\bullet v_{h} = 0$ on $\hat\Gamma_{h,\theta}^{m}$. 

By the fundamental theorem of calculus, the linear form $J_1^m(\chi_h)$ defined in \eqref{def-Jm-phi} admits the following decomposition:
\begin{align}
	J_1^m(\chi_h)
	=&\, \int_{\hat\Gamma_{h,\theta}^m }^h \frac{\delta \tilde X_h^m}{\tau} \cdot \hat n_{h,\theta}^m \chi_h
	\bigg|_{\theta=0}^{\theta=1} \notag\\
	=&\, \int_0^1 \frac{\d}{\d\theta}\int_{\hat\Gamma_{h,\theta}^m }^h \frac{\delta \tilde X_h^m}{\tau} \cdot \hat n_{h,\theta}^m \chi_h \d\theta \notag\\
	=&\, \int_0^1 \int_{\hat\Gamma_{h,\theta}^m }^h \frac{\delta \tilde X_h^m}{\tau} \cdot \partial_\theta^\bullet \hat n_{h,\theta}^m \chi_h \d\theta \notag\\
	&\, + \int_0^1 \int_{\hat\Gamma_{h,\theta}^m }^h \frac{\delta \tilde X_h^m}{\tau} \cdot \hat n_{h,\theta}^m \chi_h (\nabla_{\hat\Gamma_{h,\theta}^m } \cdot \hat e_{h}^m) \d\theta 
	\quad\mbox{(Lemma \ref{lemma:ud}, item 6)} . \notag\\
	=&\, -\int_0^1 \int_{\hat\Gamma_{h,\theta}^m }^h \frac{\delta \tilde X_h^m}{\tau} \cdot (\nabla_{\hat \Gamma_{h,\theta}^m } \hat e_{h}^m \cdot \hat n_{h,\theta}^m) \chi_h \d\theta \quad\mbox{(Lemma \ref{lemma:ud}, item 7)} \notag\\
	&\, + \int_0^1 \int_{\hat\Gamma_{h,\theta}^m }^h \frac{\delta \tilde X_h^m}{\tau} \cdot \hat n_{h,\theta}^m\, \chi_h (\nabla_{\hat\Gamma_{h,\theta}^m } \cdot \hat e_{h}^m) \d\theta 
	\notag\\
	=&\, 
	- \int_{\Ghsm}^h \frac{\delta \tilde X_h^m}{\tau} \cdot (\nabla_{\hat \Gamma_{h,*}^m } \hat e_{h}^m \cdot \hat n_{h,*}^m) \chi_h \notag\\
	&\, + \int_{\hat\Gamma_{h,*}^m }^h \frac{\delta \tilde X_h^m}{\tau} \cdot \hat n_{h,*}^m\, \chi_h (\nabla_{\hat\Gamma_{h,*}^m } \cdot \hat e_{h}^m)
	\notag\\
	&\,-\int_0^1 \int_{\hat\Gamma_{h,\theta}^m }^h \frac{\delta \tilde X_h^m}{\tau} \cdot (\nabla_{\hat \Gamma_{h,\theta}^m } \hat e_{h}^m \cdot \hat n_{h,\theta}^m) \chi_h \d\theta
	+
	\int_{\Ghsm}^h \frac{\delta \tilde X_h^m}{\tau} \cdot (\nabla_{\hat \Gamma_{h,*}^m } \hat e_{h}^m \cdot \hat n_{h,*}^m) \chi_h
	 \notag\\
	&\, + \int_0^1 \int_{\hat\Gamma_{h,\theta}^m }^h \frac{\delta \tilde X_h^m}{\tau} \cdot \hat n_{h,\theta}^m\, \chi_h (\nabla_{\hat\Gamma_{h,\theta}^m } \cdot \hat e_{h}^m) \d\theta 
	-
	\int_{\hat\Gamma_{h,*}^m }^h \frac{\delta \tilde X_h^m}{\tau} \cdot \hat n_{h,*}^m\, \chi_h (\nabla_{\hat\Gamma_{h,*}^m } \cdot \hat e_{h}^m)
	\notag\\
	&=: J_{11}^m(\chi_h) + J_{12}^m(\chi_h) + J_{13}^m(\chi_h)
	+
	J_{14}^m(\chi_h) .
\end{align}
Importantly, if the scalar test function is chosen to be $\chi_h = I_h(\ehm \cdot \nbhsm) \in S_h(\Ghsm)$, then from integration by parts and the orthogonality relation $\nabla_{\hat \Gamma_{h,*}^m } \hat n_{h,*}^m \cdot \hat e_{h}^m = \nabla_{\hat \Gamma_{h,*}^m } \hat n_{h,*}^m \cdot \hat T_{h,*}^m \hat e_{h}^m$, we obtain
\begin{align}\label{eq:J11-decomp}
	J_{11}^m(I_h(\ehm \cdot \nbhsm))
	&=
	 - \int_{\hat\Gamma_{h,*}^m }^h \frac{\delta \tilde X_h^m}{\tau} \cdot (\nabla_{\hat \Gamma_{h,*}^m } \hat e_{h}^m \cdot \hat n_{h,*}^m) \, \ehm \cdot \hat n_{h,*}^m 
	 \notag\\
	 &\quad
	 - \int_{\hat\Gamma_{h,*}^m }^h \frac{\delta \tilde X_h^m}{\tau} \cdot (\nabla_{\hat \Gamma_{h,*}^m } \hat e_{h}^m \cdot \hat n_{h,*}^m) \, \ehm \cdot (\nbhsm - \hat n_{h,*}^m)
	 \notag\\
	 &=
	 -\frac12 \int_{\hat\Gamma_{h,*}^m }^h \frac{\delta \tilde X_h^m}{\tau} \cdot \nabla_{\hat \Gamma_{h,*}^m } (\hat e_{h}^m \cdot \hat n_{h,*}^m)^2 
	 \notag\\
	 &\quad+
	 \int_{\hat\Gamma_{h,*}^m }^h \frac{\delta \tilde X_h^m}{\tau} \cdot (\nabla_{\hat \Gamma_{h,*}^m } \hat n_{h,*}^m \cdot \hat e_{h}^m) \, \ehm \cdot \hat n_{h,*}^m 
	 \notag\\
	 &\quad
	 - \int_{\hat\Gamma_{h,*}^m }^h \frac{\delta \tilde X_h^m}{\tau} \cdot (\nabla_{\hat \Gamma_{h,*}^m } \hat e_{h}^m \cdot \hat n_{h,*}^m) \, \ehm \cdot (\nbhsm - \hat n_{h,*}^m)
	 \notag\\
	 &=
	 -\frac12 \bigg(\int_{\hat\Gamma_{h,*}^m }^h - \int_{\hat\Gamma_{h,*}^m }\bigg) \frac{\delta \tilde X_h^m}{\tau} \cdot \nabla_{\hat \Gamma_{h,*}^m } (\hat e_{h}^m \cdot \hat n_{h,*}^m)^2
	 \notag\\
	 &\quad
	 -\frac12 \int_{\hat\Gamma_{h,*}^m } \frac{\delta \tilde X_h^m}{\tau} \cdot \nabla_{\hat \Gamma_{h,*}^m } (\hat e_{h}^m \cdot \hat n_{h,*}^m)^2 
	 \notag\\
	 &\quad+
 \int_{\hat\Gamma_{h,*}^m }^h \frac{\delta \tilde X_h^m}{\tau} \cdot (\nabla_{\hat \Gamma_{h,*}^m } \hat n_{h,*}^m \cdot \hat T_{h,*}^m \hat e_{h}^m) \, \ehm \cdot \hat n_{h,*}^m
	 \notag\\
	 &\quad
	 - \int_{\hat\Gamma_{h,*}^m }^h \frac{\delta \tilde X_h^m}{\tau} \cdot (\nabla_{\hat \Gamma_{h,*}^m } \hat e_{h}^m \cdot \hat n_{h,*}^m) \, \ehm \cdot (\nbhsm - \hat n_{h,*}^m)
	 \notag\\
	 &=
	 -\frac12 \bigg(\int_{\hat\Gamma_{h,*}^m }^h - \int_{\hat\Gamma_{h,*}^m }\bigg) \frac{\delta \tilde X_h^m}{\tau} \cdot \nabla_{\hat \Gamma_{h,*}^m } (\hat e_{h}^m \cdot \hat n_{h,*}^m)^2
	 \notag\\
	 &\quad
	 -\frac12 \int_{\hat\Gamma_{h,*}^m } \nabla_{\hat \Gamma_{h,*}^m }\cdot \bigg(\frac{\delta \tilde X_h^m}{\tau}  (\hat e_{h}^m \cdot \hat n_{h,*}^m)^2 \bigg)
	 \notag\\
	 &\quad
	 +\frac12 \int_{\hat\Gamma_{h,*}^m } \nabla_{\hat \Gamma_{h,*}^m } \cdot \frac{\delta \tilde X_h^m}{\tau} (\hat e_{h}^m \cdot \hat n_{h,*}^m)^2
	 \notag\\
	 &\quad+
	  \int_{\hat\Gamma_{h,*}^m }^h \frac{\delta \tilde X_h^m}{\tau} \cdot (\nabla_{\hat \Gamma_{h,*}^m } \hat n_{h,*}^m \cdot \hat T_{h,*}^m \hat e_{h}^m) \, \ehm \cdot \hat n_{h,*}^m 
	 \notag\\
	 &\quad
	 - \int_{\hat\Gamma_{h,*}^m }^h \frac{\delta \tilde X_h^m}{\tau} \cdot (\nabla_{\hat \Gamma_{h,*}^m } \hat e_{h}^m \cdot \hat n_{h,*}^m) \, \ehm \cdot (\nbhsm - \hat n_{h,*}^m)
	 \notag\\
	 &= 
	 \sum_{i=1}^5 J_{11i}^m(I_h(\ehm \cdot \nbhsm))
	 .
\end{align}
Using the super-approximation estimate in Lemma \ref{lemma:super_conv2}, the boundedness of normal vectors and $\delta\tilde X_h^m$ (cf. \eqref{eq:n-bbd} and \eqref{eq:dtX-bbd} respectively), and the inverse inequality,
\begin{align}
	|J_{111}^m(I_h(\ehm \cdot \nbhsm))|
	\lesssim h^{2k} \|  \nabla_{\hat \Gamma_{h,*}^m } (\hat e_{h}^m \cdot \hat n_{h,*}^m)^2 \|_{H_h^{2k}(\Ghsm)}
	\lesssim \| \ehm \|_{L^2(\Ghsm)}^2 . \notag
\end{align}
From the divergence theorem and a standard geometric perturbation estimate (cf. \cite{Kov18}), we get
\begin{align}
	&|J_{112}^m(I_h(\ehm \cdot \nbhsm))| 
	\notag\\
	&\leq
	\frac12 \bigg| \int_{\Gm} \nabla_{\Gm}\cdot \bigg(\frac{\delta \tilde X_h^m}{\tau}  (\hat e_{h}^m \cdot \hat n_{h,*}^m)^2 \bigg)^l \bigg|
	\notag\\
	&\quad+
	\frac12 \bigg|
	 \int_{\hat\Gamma_{h,*}^m } \nabla_{\hat \Gamma_{h,*}^m }\cdot \bigg(\frac{\delta \tilde X_h^m}{\tau}  (\hat e_{h}^m \cdot \hat n_{h,*}^m)^2 \bigg) 
	 -
	 \int_{\Gm} \nabla_{\Gm}\cdot \bigg(\frac{\delta \tilde X_h^m}{\tau}  (\hat e_{h}^m \cdot \hat n_{h,*}^m)^2 \bigg)^l
	 \bigg|
	\notag\\
	&\lesssim
	(1 + \ksm) h^{k+1/2} \| \ehm \|_{L^2(\Ghsm)}^2 . \notag
\end{align}
The estimate for $J_{13}^m$ follows directly from H\"older's inequality
\begin{align}
	|J_{113}^m(I_h(\ehm \cdot \nbhsm))| \lesssim \| \ehm \|_{L^2(\Ghsm)}^2 . \notag
\end{align}
By orthogonality and \eqref{normal-intpl},
\begin{align}
	|J_{114}^m(I_h(\ehm \cdot \nbhsm))| 
	&\lesssim \| \hat T_{h,*}^m \Nsm \|_{L^\infty(\hat \Gamma_{h,*}^m)} \| \ehm \|_{L^2(\Ghsm)}^2 \notag\\
	&= \| \hat T_{h,*}^m (\hat N_{h,*}^m - \Nsm) \|_{L^\infty(\hat \Gamma_{h,*}^m)} \| \ehm \|_{L^2(\Ghsm)}^2 \notag\\
	&\lesssim \| \hat n_{h,*}^m - \nsm \|_{L^\infty(\hat \Gamma_{h,*}^m)} \| \ehm \|_{L^2(\Ghsm)}^2 \notag\\
	&\lesssim (1 + \ksm) h^{k-1/2} \| \ehm \|_{L^2(\Ghsm)}^2 . \notag
\end{align}
Using the inverse inequality and Lemma \ref{lemma:n_bar_app}, we get
\begin{align}
	|J_{115}^m(I_h(\ehm \cdot \nbhsm))| 
	&\lesssim h^{-1/2} \| \hat n_{h,*}^m - \nbhsm \|_{L^2(\hat \Gamma_{h,*}^m)} \| \nabla_\Ghsm \ehm \|_{L^2(\Ghsm)} \| \ehm \|_{L^2(\Ghsm)} \notag\\
	&\lesssim (1 + \ksm) h^{k-3/2} \| \ehm \|_{L^2(\Ghsm)}^2 . \notag
\end{align}
In summary, $J_{11}^m$ admits the following $L^2$-stable upper bound
\begin{align}\label{eq:J11}
	|J_{11}^m(I_h(\ehm \cdot \nbhsm))| \lesssim \| \ehm \|_{L^2(\Ghsm)}^2 .
\end{align}

Then we decompose $J_{12}^m$ in the following way:
\begin{align}
	J_{12}^m(\chi_h)
	&= \int_{\hat\Gamma_{h,*}^m }^h \frac{\delta \tilde X_h^m}{\tau} \cdot \bar n_{h,*}^m \chi_h  \nabla_{\hat\Gamma_{h,*}^m } \cdot (\hat T_{h,*}^m \hat e_{h}^m)
	\notag\\
	&\quad+\int_{\hat\Gamma_{h,*}^m }^h \frac{\delta \tilde X_h^m}{\tau} \cdot \bar n_{h,*}^m \chi_h  \nabla_{\hat\Gamma_{h,*}^m } \cdot (\hat N_{h,*}^m \hat e_{h}^m)
	\notag\\
	&= \int_{\hat\Gamma_{h,*}^m }^h \frac{\delta \tilde X_h^m}{\tau} \cdot \bar n_{h,*}^m \chi_h  \nabla_{\hat\Gamma_{h,*}^m } \cdot ((\hat T_{h,*}^m - \Tsm) \hat e_{h}^m)
	\notag\\
	&\quad+ \int_{\hat\Gamma_{h,*}^m }^h \frac{\delta \tilde X_h^m}{\tau} \cdot \bar n_{h,*}^m \chi_h  \hat e_{h}^m \cdot (\nabla_{\hat\Gamma_{h,*}^m } \cdot \hat N_{h,*}^m ) 
	\notag\\
	&\quad+ \int_{\hat\Gamma_{h,*}^m }^h \frac{\delta \tilde X_h^m}{\tau} \cdot \bar n_{h,*}^m \chi_h (\hat N_{h,*}^m : \nabla_{\hat\Gamma_{h,*}^m } \hat e_{h}^m)
	\notag\\
	&=: J_{121}^m(\chi_h) + J_{122}^m(\chi_h) + J_{123}^m(\chi_h) , \notag
\end{align}
where the colon ``$:$" denotes the contraction of two matrices into a scalar.
From the inverse inequality and \eqref{normal-intpl}, we know
\begin{align}
	| J_{121}^m(\chi_h) | 
	&\lesssim 
	h^{-1}
	\| \nhsm - \nsm \|_{L^\infty(\Ghsm)}
	\| \ehm \|_{L^2(\Ghsm)} \| \chi_h \|_{L^2(\Ghsm)}
	\notag\\
	&\lesssim
	(1 + \ksm) h^{k-3/2}
	\| \ehm \|_{L^2(\Ghsm)} \| \chi_h \|_{L^2(\Ghsm)} \notag .
\end{align}
H\"older's inequality gives the bound
\begin{align}
	| J_{122}^m(\chi_h) | \lesssim \| \ehm \|_{L^2(\Ghsm)} \| \chi_h \|_{L^2(\Ghsm)} \notag .
\end{align}
Due to the nodal-wise orthogonality, $J_{123}^m(\chi_h) = 0$ for all $\chi_h \in S_h$. Collecting the estimates above, we obtain the $L^2$-stable upper bound for $J_{12}^m$:
\begin{align}\label{eq:J12}
	|J_{12}^m(\chi_h)| \lesssim \| \ehm \|_{L^2(\Ghsm)}  \| \chi_h \|_{L^2(\Ghsm)} .
\end{align}

We apply the fundamental theorem of calculus and Lemma \ref{lemma:ud} to $J_{13}^m$
\begin{align}
	J_{13}^m(\chi_h)
	&=
	-\int_0^1 \int_0^\theta \frac{\d}{\d \alpha} \int_{\hat\Gamma_{h,\alpha}^m }^h \frac{\delta \tilde X_h^m}{\tau} \cdot (\nabla_{\hat \Gamma_{h,\alpha}^m } \hat e_{h}^m \cdot \hat n_{h,\alpha}^m) \chi_h \d\alpha \d\theta
	\notag\\
	&=
	-\int_0^1 \int_0^\theta \int_{\hat\Gamma_{h,\alpha}^m }^h \frac{\delta \tilde X_h^m}{\tau} \cdot (\partial_\alpha^\bullet\nabla_{\hat \Gamma_{h,\alpha}^m } \hat e_{h}^m \cdot \hat n_{h,\alpha}^m) \chi_h \d\alpha \d\theta
	\notag\\
	&\quad
	-\int_0^1 \int_0^\theta \int_{\hat\Gamma_{h,\alpha}^m }^h \frac{\delta \tilde X_h^m}{\tau} \cdot (\nabla_{\hat \Gamma_{h,\alpha}^m } \hat e_{h}^m \cdot \partial_\alpha^\bullet \hat n_{h,\alpha}^m) \chi_h \d\alpha \d\theta
	\notag\\
	&\quad
	-\int_0^1 \int_0^\theta \int_{\hat\Gamma_{h,\alpha}^m }^h \frac{\delta \tilde X_h^m}{\tau} \cdot (\nabla_{\hat \Gamma_{h,\alpha}^m } \hat e_{h}^m \cdot \hat n_{h,\alpha}^m) (\theta \nabla_{\hat \Gamma_{h,\alpha}^m } \cdot \hat e_{h}^m ) \chi_h \d\alpha \d\theta
	\notag\\
	&=
	2 \int_0^1 \int_0^\theta \theta \int_{\hat\Gamma_{h,\alpha}^m }^h \frac{\delta \tilde X_h^m}{\tau} \cdot (\nabla_{\hat \Gamma_{h,\alpha}^m } \hat e_{h}^m \nabla_{\hat \Gamma_{h,\alpha}^m } \hat e_{h}^m \, \hat n_{h,\alpha}^m) \chi_h \d\alpha \d\theta
	\notag\\
	&\quad
	-\int_0^1 \int_0^\theta \theta \int_{\hat\Gamma_{h,\alpha}^m }^h \frac{\delta \tilde X_h^m}{\tau} \cdot \hat n_{h,\alpha}^m \big| \nabla_{\hat \Gamma_{h,\alpha}^m } \hat e_{h}^m \cdot \hat n_{h,\alpha}^m \big|^2 \chi_h \d\alpha \d\theta
	\notag\\
	&\quad
	-\int_0^1 \int_0^\theta \theta \int_{\hat\Gamma_{h,\alpha}^m }^h \frac{\delta \tilde X_h^m}{\tau} \cdot (\nabla_{\hat \Gamma_{h,\alpha}^m } \hat e_{h}^m \cdot \hat n_{h,\alpha}^m) (\nabla_{\hat \Gamma_{h,\alpha}^m } \cdot \hat e_{h}^m ) \chi_h \d\alpha \d\theta
	\notag ,
\end{align}
and consequently from H\"older's inequality and the norm equivalence 
\begin{align}\label{eq:J13}
	| J_{13}^m(\chi_h) |
	\lesssim
	\| \nabla_\Ghsm \ehm \|_{L^\infty(\Ghsm)} \| \nabla_\Ghsm \ehm \|_{L^2(\Ghsm)} \| \chi_h \|_{L^2(\Ghsm)} .
\end{align}
The term $J_{14}$ can be bounded with a very similar fashion whose proof is therefore omitted
\begin{align}\label{eq:J14}
	| J_{14}^m(\chi_h) |
	\lesssim
	\| \nabla_\Ghsm \ehm \|_{L^\infty(\Ghsm)} \| \nabla_\Ghsm \ehm \|_{L^2(\Ghsm)} \| \chi_h \|_{L^2(\Ghsm)} .
\end{align}

The estimate for $J_1^m$ follows from \eqref{eq:J11}--\eqref{eq:J14}
\begin{align}
	|J_{1}^m(I_h(\ehm \cdot \nbhsm))| \lesssim \| \ehm \|_{L^2(\Ghsm)}^2 . \notag
\end{align}
The Lipschitz continuity of $u$ implies
\begin{align}
	|J_2^m(\chi_h)| &\lesssim \| \ehm \|_{L^2(\Ghsm)} \| \chi_h \|_{L^2(\Ghsm)} \notag .
\end{align}
Then we conclude
\begin{align}\label{eq:Je}
	|J^m(I_h(\ehm \cdot \nbhsm)))| \lesssim \| \ehm \|_{L^2(\Ghsm)}^2 .
\end{align}
For a more general test function $\chi_h$, there is no way to eliminate the gradients in $J_{11}^m$ and $J_{12}^m$. A similar integration-by-parts argument as in \eqref{eq:J11-decomp} will give
\begin{align}\label{eq:Jphi}
	|J^m(\chi_h)| \lesssim \min\bigg\{\| \ehm \|_{H^1(\Ghsm)} \| \chi_h \|_{L^2(\Ghsm)}, \| \ehm \|_{L^2(\Ghsm)} \| \chi_h \|_{H^1(\Ghsm)}\bigg\} .
\end{align}
As a result,
\begin{align}\label{eq:JeM}
	&|J^m(I_h (\eM \cdot\nbhsm))|
	\notag\\ 
	&\leq 	|J^m(I_h(\ehm \cdot\nbhsm))| 
	+ 
	|J^m(I_h(\tau I_h\Tsm v^m \cdot\nbhsm))| 
	+
	|J^m(I_h(\delta\ehm \cdot\nbhsm))| 
	\notag\\
	&\lesssim \| \ehm \|_{L^2(\Ghsm)}^2
	+
	\tau \| \ehm \|_{L^2(\Ghsm)}
	+
	\| \ehm \|_{L^2(\Ghsm)} \| I_h(\delta\ehm \cdot\nbhsm) \|_{H^1(\Ghsm)}
	\notag\\
	&\lesssim \| \ehm \|_{L^2(\Ghsm)}^2
	+
	\tau \| \ehm \|_{L^2(\Ghsm)}
	,
\end{align}
where we have used \eqref{eq:de-bbd} in the last line.
\begin{remark}\upshape
	We notice from \eqref{eq:Jphi} and \eqref{eq:JeM} that $J^m$ has an improved bound if the test function has some certain orthogonality property. This is a key observation in the convergence proof since now we have very good control over the right-hand side of \eqref{eq:JeM} by $L_t^\infty L_x^2$ norm solely, compensating the absence of $H^1$-parabolicity.
\end{remark}

\section{Proof of Theorem \ref{thm:main}}
\label{Proof-THM-1}

\subsection{$L_t^\infty L_x^2$ error estimates}
\label{sec:err-est}

Up to now, we have confined our discussions on the curves at $t_m$. But in order to apply Gr\"onwall's inequality to get the main error estimate, we need to work with $\GhsM$. To this end, it is desirable to quantify the consistency between $\hat \Gamma_{h,*}^{m+1}$ and $\Ghsm$.
In fact, $\GhsM$ can be identified as $\Ghsm$ perturbed by an infinitesimal displacement of order $O(\tau)$ under some suitable norms. This $O(\tau)$ displacement can be quantified by the discrete velocity estimates developed in Section \ref{sec:bbd_vel}.

First we use the geometric relation \eqref{X-id-Hn0} to get
\begin{align}\label{eq:hat_X_s_diff3}
	\|  X_{h,*}^{m+1} - \hat X_{h,*}^{m} \|_{W^{1,\infty}(\Ghsm)} \lesssim \tau . 
\end{align}
Form \eqref{eq:geo_rel_v} and \eqref{eq:de-bbd}, we also have
\begin{align}\label{eq:diff-Xhm}
	&\|  X_{h}^{m+1} - X_{h}^{m} \|_{W^{1,\infty}(\Ghsm)}
	\notag\\ 
	&\leq 	\|  \delta\ehm \|_{W^{1,\infty}(\Ghsm)} 
	+
	\|  \tau I_h v^m \|_{W^{1,\infty}(\Ghsm)}
	+
	\|  \tau I_h g^m \|_{W^{1,\infty}(\Ghsm)}
	\notag\\
	&\lesssim \tau . 
\end{align}
According to the geometric relations \eqref{eq:geo_rel_4}--\eqref{eq:geo_rel_6}, 
\begin{align}\label{eq:hat_X_s_diff}
	\| \hat X_{h,*}^{m+1} - \hat X_{h,*}^{m} \|_{L^{\infty}(\Ghsm)}
	\lesssim
	\tau .
\end{align}
The $W^{1,\infty}$ estimate 
\begin{align}\label{eq:hat_X_s_diff1}
	\| \hat X_{h,*}^{m+1} - \hat X_{h,*}^{m} \|_{W^{1,\infty}(\Ghsm)}
	\lesssim \tau 
\end{align}
follows from a similar argument to \cite[Eq. (4.92)]{BL24}.

According to \cite[Lemma 4.3]{KLL17}, the displacement estimates above and the induction hypothesis \eqref{eq:ind_hypo1} together imply that the $L^p$ and $W^{1,p}$ norms of finite element functions on $\Ghm, \GhM, \Ghsm, \GhsM$ and $\Gamma_{h,*}^{m+1}$ with a common nodal vector are all equivalent.

The displacement estimates for positions also imply that all different kinds of normal vectors are evolving uniformly in time.
The Lipschitz continuity and  \eqref{eq:hat_X_s_diff} imply
\begin{align}\label{nsM-nsm-nodes1}
	|\nsM-\nsm| 
	\lesssim |\hat X_{h,*}^{m+1}-\hat X_{h,*}^{m}|
	+ \tau \lesssim \tau \quad\mbox{at the nodes} ,
\end{align}
and from \eqref{eq:hat_X_s_diff1}, Lemma \ref{lemma:ud} (Item 7), the definition of averaged normal vectors (Eq. \eqref{bar-n-hat-n}) and the norm equivalence, we also have
\begin{align}\label{eq:n-diff}
	&\| \hat n_{h,*}^{m+1} - \hat n_{h,*}^{m} \|_{L^\infty(\Ghsm)}
	+
	\| \bar n_{h,*}^{m+1} - \bar n_{h,*}^{m} \|_{L^\infty(\Ghsm)} \notag\\
	&\lesssim \| \nabla_\Ghsm (\hat X_{h,*}^{m+1} - \hat X_{h,*}^{m}) \|_{L^\infty(\Ghsm)} 
	\lesssim \tau .
\end{align}

Additionally, we need the following lemma to perform a stable conversion between $\| \eM\cdot \nbhsm \|_{L^2_h(\Ghsm)}^2$ and $\| \ehM\cdot \nbhsM \|_{L^2_h(\hat\Gamma_{h,*}^{m+1})}^2$. The proof is standard and can be found in Appendix \ref{sec:e-convert}.
\begin{lemma}\label{lemma:e-convert}
	We have the following estimate:
	\begin{align}
		&\| \ehM\cdot \nbhsM \|_{L^2_h(\hat\Gamma_{h,*}^{m+1})}^2 - \| \eM\cdot \nbhsm \|_{L^2_h(\Ghsm)}^2 \notag\\
		&\lesssim
		\tau(\tau + (1 + \ksm) h^{k})^2 + \tau \big(\| \ehM \|_{L^2(\Ghsm)}^2 + \| \eM \|_{L^2(\Ghsm)}^2 + \| \ehm \|_{L^2(\hat\Gamma_{h, *}^{m})}^2  \big) .
		\notag
	\end{align}
\end{lemma}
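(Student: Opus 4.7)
The strategy is to express both mass-lumped norms via \eqref{eq:lumped_mass} as weighted sums over the $J$ finite-element nodes, and then bound the difference node-by-node. The two sides differ in three ways: (i) the Gauss--Lobatto weights come from $|\nabla F_K|$ on two different curves $\GhsM$ vs.\ $\Ghsm$; (ii) the error nodal vector is $\hat\bfe^{m+1}=\bfx^{m+1}-\hat\bfx_*^{m+1}$ vs.\ $\bfe^{m+1}=\bfx^{m+1}-\bfx_*^{m+1}$; (iii) the averaged normal is $\nbhsM$ vs.\ $\nbhsm$. For (i), estimate \eqref{eq:hat_X_s_diff1} yields $|\omega_j^{(m+1)} - \omega_j^{(m)}| \lesssim \tau\,\omega_j^{(m)}$, which after norm equivalence between finite element functions on $\GhsM$ and $\Ghsm$ contributes $\lesssim \tau \|\ehM\|_{L^2(\Ghsm)}^2$ to the final bound.

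After reducing to the common weight $\omega_j^{(m)}$, I apply the identity $a^2-b^2 = (a-b)(a+b)$ with $a_j = \hat e_j^{m+1}\cdot \bar n^{m+1}_{h,*,j}$ and $b_j = e_j^{m+1}\cdot \bar n^{m}_{h,*,j}$. The sum factor is bounded by $|\hat e_j^{m+1}| + |e_j^{m+1}|$, while the difference factor splits as
\begin{align*}
a_j - b_j = (\hat e_j^{m+1} - e_j^{m+1})\cdot \bar n^{m+1}_{h,*,j} + e_j^{m+1}\cdot (\bar n^{m+1}_{h,*,j} - \bar n^{m}_{h,*,j}).
\end{align*}
The second summand is $O(\tau\,|e_j^{m+1}|)$ by \eqref{eq:n-diff}. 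For the first, the geometric relation \eqref{eq:geo_rel_1}--\eqref{eq:geo_rel_2} gives $\hat e_j^{m+1} - e_j^{m+1} = -T_{*,j}^{m+1} e_j^{m+1} + f_{h,j}$ with $|f_{h,j}| \lesssim |T_{*,j}^{m+1} e_j^{m+1}|^2$. Since $(T_{*,j}^{m+1} e_j^{m+1})\perp n_{*,j}^{m+1}$, I rewrite
\begin{align*}
(T_{*,j}^{m+1} e_j^{m+1})\cdot \bar n^{m+1}_{h,*,j} = (T_{*,j}^{m+1} e_j^{m+1})\cdot (\bar n^{m+1}_{h,*,j} - n_{*,j}^{m+1}),
\end{align*}
whose nodal magnitude I would control using Lemma \ref{lemma:n_bar_app} (plus the nodal identity $||\nbhsM|-1| \lesssim h^{2k}$ from \eqref{eq:nhs_bar_len}) to obtain $\lesssim (1+\ksm)h^{k}\,|e_j^{m+1}|$.

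Assembling via Cauchy--Schwarz produces the cross term $(1+\ksm)h^{k}\,\|\eM\|_{L^2}(\|\ehM\|_{L^2}+\|\eM\|_{L^2})$, and the main obstacle is that the factor $(1+\ksm)h^k$ is not by itself dominated by $\tau$ under the step-size condition $\tau \lesssim h^{k}$. The remedy is to invoke \eqref{eq:e_NT}, namely $\|\eM\|_{L^2(\Ghsm)} \lesssim \tau + \|\ehm\|_{L^2(\Ghsm)}$, on one copy of $\|\eM\|_{L^2}$, and then distribute by Young's inequality
\begin{align*}
(1+\ksm)h^{k}\,\|\eM\|_{L^2}(\|\ehM\|_{L^2}+\|\eM\|_{L^2}) \leq \tfrac{\tau}{4}\big(\|\ehM\|_{L^2}^2 + \|\eM\|_{L^2}^2\big) + C\tau\big(\tau + (1+\ksm)h^{k}\big)^2 + C\tau\|\ehm\|_{L^2}^2,
\end{align*}
which yields exactly the three error terms on the right-hand side of the lemma. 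The quadratic remainder from $f_{h,j}$ and the self-cancellation from $|f_{h,j}|\lesssim|e_j^{m+1}|^2$ are absorbed using the induction hypothesis \eqref{eq:ind_hypo1} and \eqref{Linfty-W1infty-hat-em}, which yield $\|\eM\|_{L^\infty(\Ghsm)}\lesssim h^2$; the resulting cubic sum $\sum_j \omega_j^{(m)}|e_j^{m+1}|^2(|\ehM_j|+|\eM_j|)$ is then handled again by Young together with \eqref{eq:e_NT}, producing contributions of the same type as above.
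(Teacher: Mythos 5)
Your high-level split into weight change, normal change, and error change mirrors the paper's decomposition into $M_1^m,M_2^m,M_3^m$, and the orthogonality step $(\TsM\eM)\cdot\nbhsM=(\TsM\eM)\cdot(\nbhsM-\nsM)$ is a valid first move. The weight-change and normal-change pieces, as you handle them, match the paper's $M_1^m$ and $M_2^m$. The gap is in the error-change piece, and it is fatal as written.

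First, a smaller issue: the pointwise (nodal) bound $|\nbhsM-\nsM|\lesssim(1+\ksm)h^k$ is not available. Lemma~\ref{lemma:n_bar_app} is an $L^2$ estimate; by the inverse inequality the corresponding $L^\infty$ bound is only $(1+\ksm)h^{k-1/2}$, consistent with \eqref{normal-intpl}. Second, and more fundamentally, the resulting cross term $(1+\ksm)h^{k-1/2}\|\eM\|_{L^2}\bigl(\|\ehM\|_{L^2}+\|\eM\|_{L^2}\bigr)$ carries no $O(\tau)$ factor, and your proposed remedy --- replacing one copy of $\|\eM\|$ via \eqref{eq:e_NT} and distributing by Young's inequality with weight $\tau$ --- does not close. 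Concretely, Young's inequality produces $\tau^{-1}(1+\ksm)^2h^{2k-1}\|\ehm\|_{L^2}^2$, and bounding this by $\tau\|\ehm\|_{L^2}^2$ would require $(1+\ksm)^2h^{2k-1}\lesssim\tau^2$, which fails by a factor $h^{-1}$ under $\tau\le ch^k$. Even with the optimistic factor $(1+\ksm)h^k$ in place of $(1+\ksm)h^{k-1/2}$, the corresponding term $\tau^{-1}(1+\ksm)^2h^{2k}\|\ehm\|_{L^2}^2$ requires $\|\ehm\|_{L^2}\lesssim\tau$, which is not available a priori (indeed $\|\ehm\|_{L^2}$ can be as large as $O(h^k)\gg\tau$).

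What your argument is missing is the further refinement that the paper's proof uses to manufacture the extra $O(\tau)$ factor in every surviving term. The paper works with the \emph{old} normal $\hat n_{h,*}^m$ instead of $\nbhsM$, and then splits $\TsM=(\TsM-\Tsm)+\Tsm$ together with $\eM=(\eM-\ehm-\tau I_h\Tsm v^m)+\ehm+\tau I_h\Tsm v^m$. The decisive ingredient is the nodal identity $\Tsm\ehm=0$ (the projection error is normal by construction), which kills the $\Tsm\ehm$ contribution outright; the remaining pieces each carry a $\tau$ from either $\|\TsM-\Tsm\|_{L^\infty}\lesssim\tau$ (cf.\ \eqref{nsM-nsm-nodes1}), $\|\nsM-\nsm\|_{L^\infty}\lesssim\tau$, or the discrete velocity estimate \eqref{eq:vel_T} for $\eM-\ehm-\tau I_h\Tsm v^m$. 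That is exactly why the paper's surviving leading terms $M_{33}^m$, $M_{34}^m$, $M_{35}^m$ have the shape $\bigl((1+\ksm)h^k\tau+\tau^2\bigr)\bigl(\|\ehM\|+\|\eM\|\bigr)$, which Young's inequality with weight $\tau$ turns into precisely $\tau(\tau+(1+\ksm)h^k)^2+\tau\bigl(\|\ehM\|^2+\|\eM\|^2\bigr)$. Your scheme, lacking these $\tau$ factors, cannot reach that form regardless of how Young's inequality is deployed.
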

\begin{proof}
	See Appendix \ref{sec:e-convert}.
\end{proof}

Now, we are in a good position to derive the main error estimate. Testing the error equation \eqref{eq:err_eq2} by $\chi_h = I_h(\eM \cdot \nbhsm)$ and using Young's inequality, we obtain
\begin{align}\label{e-est-mass}
	&\frac{1}{2\tau}(\| \eM \cdot \nbhsm \|_{L_h^2(\Ghsm)}^2 - \| \ehm \cdot \nbhsm \|_{L_h^2(\Ghsm)}^2) 
	\notag\\
	&\leq - J^m(I_h(\eM \cdot \nbhsm)) - d^m(I_h(\eM \cdot \nbhsm)) ,
\end{align} 
and consequently
\begin{align}\label{error-estimate-1}
	& \frac{1}{2\tau} (\| \ehM \cdot \nbhsM \|_{L_h^2(\hat\Gamma_{h,*}^{m+1})}^2 - \| \ehm \cdot \nbhsm \|_{L_h^2(\Ghsm)}^2)  \notag\\
	&= \frac{1}{2\tau}(\| \eM \cdot \nbhsm \|_{L_h^2(\Ghsm)}^2 - \| \ehm \cdot \nbhsm \|_{L_h^2(\Ghsm)}^2)  \notag\\
	&\quad+ \frac{1}{2\tau}(\| \ehM \cdot \nbhsM \|_{L_h^2(\hat\Gamma_{h,*}^{m+1})}^2 - \| \eM \cdot \nbhsm \|_{L_h^2(\Ghsm)}^2)
	\notag\\
	&{\lesssim (\tau + (1 + \ksm) h^{k})^2 + \big(\| \ehM \|_{L^2(\Ghsm)}^2 + \| \eM \|_{L^2(\Ghsm)}^2 + \| \ehm \|_{L^2(\hat\Gamma_{h, *}^{m})}^2  \big)} ,
\end{align} 
where the inequality follows from \eqref{e-est-mass}, \eqref{eq:cons_err}, \eqref{eq:JeM} and Lemma \ref{lemma:e-convert}.

Then, by using \eqref{eq:e_NT}, \eqref{eq:ehM-L2} and \eqref{eq:NT_stab_L22}, we are able to convert all norms appearing on the right-hand side of \eqref{error-estimate-1} into $\| \ehm\cdot \nbhsm \|_{L_h^2(\Ghsm)}$ up to a consistency error of order $O(\tau)$:
\begin{align}\label{error-estimate-2}
	& \frac{ \| \ehM\cdot \nbhsM \|_{L_h^2(\hat\Gamma_{h,*}^{m+1})}^2 - \| \ehm\cdot \nbhsm \|_{L_h^2(\Ghsm)}^2 }{2\tau}
	\notag\\
	&\lesssim
	( \tau + (1 + \ksm)  h^{k})^2
		+ \| \ehm\cdot \nbhsm \|_{L_h^2(\Ghsm)}^2 .
\end{align} 
Applying the discrete Gr\"onwall's inequality and \eqref{eq:NT_stab_L22}, we obtain the main error estimate in $L_t^\infty L_x^2$ norm:
\begin{align}\label{eq:err_fin0}
	&\max_{1\le m\le l} \| \hat e_h^{m} \|_{L^2(\Ghsm)}^2 
	\leq 4
	\max_{1\le m\le l} \| \hat e_h^{m} \cdot \nbhsm \|_{L_h^2(\Ghsm)}^2 
	\notag\\
	&\le 
	C
	\| \hat e_h^{0} \|_{L^2(\Gamma_h^0)}^2 
	+
	\sum_{m=0}^{l-1} C_\km \tau ( \tau + (1 + \ksm)  h^{k})^2 
	\notag\\
	&\le 
	C h^{2k+2}
	+
	C_{\kappa_{l-1}} ( \tau + (1 + \kappa_{*, l-1})  h^{k})^2 
	\qquad 1\leq l\leq [T/\tau]
	,
\end{align}
where we have used the assumption that the initial approximation satisfies \eqref{P0}--\eqref{P1}.
Therefore, it remains to show the boundedness of the shape regularity constants.

\subsection{Shape regularity of $\Ghsm$ in $H_h^k$ norm}
\label{sec:bbd}

In this subsection, we are going to prove the a priori boundedness of $\kappa_{*,[T/\tau]}$. We regard $\hat X_{h,*}^m$ and $X_h^m$ as the maps from the piecewise flat curve $\Gamma_{h,\rm f}^0$ to $\Ghsm$ and $\Gamma_h^m$, respectively. Let $v^m_{\rm f} = v^m \circ a^m \circ \hat X_{h,*}^m$ and $g^m_{\rm f} = g^m \circ a^m \circ \hat X_{h,*}^m$, which are functions defined on the piecewise flat curve $\Gamma_{h,\rm f}^0$. 
By using relations \eqref{eq:geo_rel_4}--\eqref{eq:geo_rel_6}, we can decompose the displacement as follows for any $0\leq m \leq [T/\tau]-1$:
\begin{align*}
	&\|\hat X_{h,*}^{m + 1} -\hat X_{h,*}^{m}\|_{W^{j,\infty}_h(\Gamma_{h,\rm f}^0)} \notag\\
	&\le \| I_h [(X^{m + 1} - {\rm id})\circ a^m \circ \hat X_{h,*}^{m} ]\|_{W^{j,\infty}_h(\Gamma_{h,\rm f}^0)}
	+ \| \rho_h\circ \hat X_{h,*}^{m} \|_{W^{j,\infty}_h(\Gamma_{h,\rm f}^0)} \\
	&\quad+\| I_h[ I_h(T_*^m\circ \hat X_{h,*}^{m}) I_h (N_*^{m+1}\circ \hat X_{h,*}^{m+1}-N_*^{m}\circ \hat X_{h,*}^{m})  (\hat e_{h}^{m + 1} \circ \hat X_{h,*}^{m})]\|_{W^{j,\infty}_h(\Gamma_{h,\rm f}^0)} \\
	&\quad+\| I_h[(T_*^m \circ \hat X_{h,*}^{m} ) (X_{h}^{m + 1} - X_{h}^{m})  ] \|_{W^{j,\infty}_h(\Gamma_{h,\rm f}^0)} \\
	&=: E_1^m + E_2^m + E_3^m. 
\end{align*} 
From the stability of $I_h$ on $C^0(\Gamma_{h,\rm f}^0)\cap W^{k,\infty}_h(\Gamma_{h,\rm f}^0)$, chain rule, the inverse inequality and \eqref{eq:geo_rel_5}, we derive 
\begin{align}\label{eq:E1}
	E_1^m 
	&\le C_0 \| (X^{m + 1} - {\rm id})\circ a^m \circ \hat X_{h,*}^{m} \|_{W^{j,\infty}_h(\Gamma_{h,\rm f}^0)}
	+
	\| \rho_h\circ \hat X_{h,*}^{m} \|_{W^{j,\infty}_h(\Gamma_{h,\rm f}^0)}
	\notag\\
	&\le 
	C_0  \|  X^{m+1} - {\rm id} \|_{W^{j,\infty}(\Gm)} 
	\Big(1+ \sum_{j_1+\cdots+j_i\le j\atop j_1,\dots,j_i\ge 1} 
	\|\hat X_{h, *}^{m}\|_{W_h^{j_1,\infty}(\Gamma_{h,\rm f}^0)}\cdots\|\hat X_{h, *}^{m}\|_{W_h^{j_i,\infty}(\Gamma_{h,\rm f}^0)} \Big) \notag\\
	&\quad
	+ C_0 h^{-j} \| \rho_h\circ \hat X_{h,*}^{m} \|_{L^{\infty}(\Gamma_{h,\rm f}^0)}
	\notag\\
	&\le 
	C_0 \tau [1 + j(j-1)\|\hat X_{h, *}^{m}\|_{W_h^{j-1,\infty}(\Gamma_{h,\rm f}^0)}^j]
	+
	C_0j \tau \|\hat X_{h, *}^{m}\|_{W_h^{j,\infty}(\Gamma_{h,\rm f}^0)} \notag\\
	&\quad
	+ C_0 h^{-j} \big(\tau^2 + \|I_h \Tsm (\hat X_{h,*}^{m+1} - \hat X_{h,*}^{m}) \|_{L^\infty(\Ghsm)}^2\big) \notag\\
	&\le 
	C_\km h^{-j} \tau^2 
	+
	C_0 \tau [1 + j(j-1)\|\hat X_{h, *}^{m}\|_{W_h^{j-1,\infty}(\Gamma_{h,\rm f}^0)}^j]
	+
	C_0j \tau \|\hat X_{h, *}^{m}\|_{W_h^{j,\infty}(\Gamma_{h,\rm f}^0)} ,
\end{align}
where we have applied \eqref{eq:geo_rel_5} and \eqref{eq:hat_X_s_diff1} in the third and fourth inequalities respectively.
Here we have added a factor $j(j-1)$ in front of $\|\hat X_{h, *}^{m}\|_{W_h^{j-1,\infty}(\Gamma_{h,\rm f}^0)}$ to indicate that this term should disappear in the case $j=1$, and we have added a factor $j$ in front of $\|\hat X_{h, *}^{m}\|_{W_h^{j,\infty}(\Gamma_{h,\rm f}^0)}$ to indicate that this term should disappear in the case $j=0$. 

Furthermore, using the inverse inequality, we have 
\begin{align}\label{eq:E2}
	E_2^m 
	&\le C_0h^{-j-1/2} \| I_h[ I_h(T_*^m\circ \hat X_{h,*}^{m}) I_h (N_*^{m+1}\circ \hat X_{h,*}^{m+1}
	\notag\\
	&\hspace{100pt}
	-N_*^{m}\circ \hat X_{h,*}^{m})  (\hat e_{h}^{m + 1} \circ \hat X_{h,*}^{m})]\|_{L^{2}_h(\Gamma_{h,\rm f}^0)} 
	\notag\\
	&\le C_\km h^{-j-1/2} \tau (\tau + (1 + \ksm) h^{k}) ,
\end{align} 
where we have used the estimate $\|N_*^{m+1}\circ \hat X_{h,*}^{m+1}-N_*^{m}\circ \hat X_{h,*}^{m}\|_{L^{\infty}_h(\Gamma_{h,\rm f}^0)}\le C_\km \tau$ which follows from \eqref{eq:hat_X_s_diff1} and \eqref{nsM-nsm-nodes1}, 
and the error estimate $\| \hat e_{h}^{m + 1} \circ \hat X_{h,*}^{m} \|_{L^{2}_h(\Gamma_{h,\rm f}^0)}\le C_\km (\tau + (1 + \ksm)h^{k})$ which follows from the error estimate \eqref{eq:err_fin0}.

Using relation \eqref{eq:geo_rel_v} we can estimate $E_3$ in the above inequality as follows:
\begin{align*} 
	&E_3
	=
	\| I_h[(T_*^m \circ \hat X_{h,*}^{m} ) (X_{h}^{m + 1} - X_{h}^{m})  ]\|_{W^{j, \infty}_h(\Gamma_{h,\rm f}^0)} \notag\\
	&\leq \| I_h[(T_*^m \circ \hat X_{h,*}^{m} ) (X_{h}^{m+1} - X_{h}^{m} - \tau I_h v^m_{\rm f}) ] \|_{W^{j, \infty}_h(\Gamma_{h,\rm f}^0)} 
	\notag\\
	&\quad
	+ \tau\| I_h[(T_*^m \circ \hat X_{h,*}^{m} ) v^m_{\rm f}] \|_{W^{j, \infty}_h(\Gamma_{h,\rm f}^0)} \notag\\
	&= \| I_h[(T_*^m \circ \hat X_{h,*}^{m} ) (\eM - \ehm  - \tau I_h \Tsm v^m_{\rm f} + \tau I_h g^m_{\rm f} )]  \|_{W^{j, \infty}_h(\Gamma_{h,\rm f}^0)} 
	\notag\\
	&\quad
	+ \tau
	\| (T^m v^m)\circ a^m \circ \hat X_{h,*}^m \|_{W^{j, \infty}_h(\Gamma_{h,\rm f}^0)} 
	\\
	&\leq \| I_h[(T_*^m \circ \hat X_{h,*}^{m} ) (\eM - \ehm  - \tau I_h \Tsm v^m_{\rm f} )]  \|_{W^{j, \infty}_h(\Gamma_{h,\rm f}^0)} 
	\notag\\
	&\quad
	+ C_\km h^{-j+1}\tau^2 
	+
	C_0 \tau [1 + j(j-1)\|\hat X_{h, *}^{m}\|_{W_h^{j-1,\infty}(\Gamma_{h,\rm f}^0)}^j]
	+
	C_0j \tau \|\hat X_{h, *}^{m}\|_{W_h^{j,\infty}(\Gamma_{h,\rm f}^0)}
	 ,
\end{align*}  
where the last inequality follows from the following estimates: 
\begin{align*}
	&\| I_h g^m_{\rm f} \|_{W^{j, \infty}_h(\Gamma_{h,\rm f}^0)} 
	\le h^{-j+1} \| I_h g^m_{\rm f} \|_{W^{1, \infty}(\Gamma_{h,\rm f}^0)} \le C_\km h^{-j+1}\tau 
	\qquad\mbox{(in view of \eqref{W1infty-g})} ,\\
	&
	\| (T^m v^m)\circ a^m \circ \hat X_{h,*}^m \|_{W^{j, \infty}_h(\Gamma_{h,\rm f}^0)} 
	\le 
	C_0 [1 + j(j-1)\|\hat X_{h, *}^{m}\|_{W_h^{j-1,\infty}(\Gamma_{h,\rm f}^0)}^j]
	\notag\\
	&\hspace{150pt}
	+
	C_0j \|\hat X_{h, *}^{m}\|_{W_h^{j,\infty}(\Gamma_{h,\rm f}^0)} 
	\notag\\
	&\hspace{150pt}\mbox{(chain rule of differentation, cf. \eqref{eq:E1})} . 
\end{align*}
We continue the estimate for $E_3$:
\begin{align}\label{eq:E3}
	E_3
	&\leq  C_0h^{-j+3/2}\| I_h[(T_*^m \circ \hat X_{h,*}^{m} ) (\eM - \ehm  - \tau I_h \Tsm v^m_{\rm f} )]  \|_{H_h^{2}(\Gamma_{h,\rm f}^0)} 
	\notag\\
	&\quad
	+ C_\km h^{-j+1}\tau^2 
	+
	C_0 \tau [1 + j(j-1)\|\hat X_{h, *}^{m}\|_{W_h^{j-1,\infty}(\Gamma_{h,\rm f}^0)}^j]
	+
	C_0j \tau \|\hat X_{h, *}^{m}\|_{W_h^{j,\infty}(\Gamma_{h,\rm f}^0)}
	\notag\\
	&\leq C_\km h^{-j+1/2} \tau  (\tau + (1 + \ksm) h^{k}) 
	+ 
	C_\km h^{-j+1/2} \tau  \| \ehm \|_{H^1(\Ghsm)}
	\notag\\
	&\quad
	+ C_\km h^{-j+1}\tau^2 
	+
	C_0 \tau [1 + j(j-1)\|\hat X_{h, *}^{m}\|_{W_h^{j-1,\infty}(\Gamma_{h,\rm f}^0)}^j]
	+
	C_0j \tau \|\hat X_{h, *}^{m}\|_{W_h^{j,\infty}(\Gamma_{h,\rm f}^0)}
	\notag\\
	&\leq C_\km h^{-j-1/2} \tau  (\tau + (1 + \ksm) h^{k})
	\notag\\
	&\quad
	+
	C_0 \tau [1 + j(j-1)\|\hat X_{h, *}^{m}\|_{W_h^{j-1,\infty}(\Gamma_{h,\rm f}^0)}^j]
	+
	C_0j \tau \|\hat X_{h, *}^{m}\|_{W_h^{j,\infty}(\Gamma_{h,\rm f}^0)}
	,
\end{align}
where we have used the velocity estimate \eqref{eq:vel_T_H2} and error estimate \eqref{eq:err_fin0} in the second and third inequalities respectively.

%

Collecting the estimates for $E_1$, $E_2$ and $E_3$ (Eqs. \eqref{eq:E1}--\eqref{eq:E3}),
\begin{align}\label{eq:Wj_inf}
&\|\hat X_{h,*}^{m + 1} -\hat X_{h,*}^{m}\|_{W^{j,\infty}_h(\Gamma_{h,\rm f}^0)} \notag\\ 
&\leq C_\km h^{-j-1/2} \tau  (\tau + (1 + \ksm) h^{k})
\notag\\
&\quad
+
C_0 \tau [1 + j(j-1)\|\hat X_{h, *}^{m}\|_{W_h^{j-1,\infty}(\Gamma_{h,\rm f}^0)}^j]
+
C_0j \tau \|\hat X_{h, *}^{m}\|_{W_h^{j,\infty}(\Gamma_{h,\rm f}^0)}
.
\end{align} 
Due to the stepsize condition $\tau\leq ch^k$, for sufficiently small mesh size $h\leq h_{\km,\ksm}$ ($h_{\km,\ksm}$ is some constant which depends on $\km$ and $\ksm$), we have
$$
C_\km h^{-j-1/2} \tau  (\tau + (1 + \ksm) h^{k})
\leq
C_{\km,\ksm} \tau h^{k-j-1/2} 
\leq C_0 \tau
\qquad 0\leq j\leq k-1 .
$$
Therefore, if we take $j=0$, by using the triangle inequality, 
\begin{align}
	&\|  \hat X_{h,*}^{m+1} \|_{L^\infty(\Ghso)} - \|  \hat X_{h,*}^0 \|_{L^\infty(\Ghso)} \notag\\
	&\le \sum_{r=0}^m\|\hat X_{h,*}^{r + 1} -\hat X_{h,*}^{r}\|_{L^\infty(\Gamma_{h,\rm f}^0)} \notag\\ 
	&\le 
	C_0 
	+ \sum_{r=0}^m C_0 \tau \| \hat X_{h,*}^r \|_{L^\infty(\Gamma_{h,\rm f}^0)}
	\notag
	 .
\end{align}
By applying the discrete Gr\"onwall's inequality, we obtain the following result under the mesh size condition $h\leq h_{\km,\ksm}$: 
\begin{align}
	\max_{0\le m\le [T/\tau]} \|  \hat X_{h,*}^{m} \|_{L^\infty(\Ghso)} 
	&\le 
	C_0 . \notag
\end{align}
Note that the right-hand side of \eqref{eq:Wj_inf} is linear up to the leading order $\|  \hat X_{h,*}^{m} \|_{W_h^{j,\infty}}$. Therefore, recursively increasing the regularity exponent $j$, we can prove
\begin{align}\label{eq:k_bdd}
	\max_{0\le m\le [T/\tau]} \|  \hat X_{h,*}^{m} \|_{W_h^{j,\infty}(\Ghso)} 
	&\le 
	C_0 \qquad 0\leq j \leq k-1. 
\end{align}
The proof of $\| (\hat X_{h,*}^{m+1})^{-1} \|_{W^{1,\infty}(\hat\Gamma_{h,*}^{m+1})}\le C_0$ with $0\leq m\leq [T/\tau - 1]$ is simpler, i.e., the same as
\cite[Eq. (4.95)]{BL24} and \cite[Eq. (6.5)]{BGV25b}, and therefore omitted. 
This boundedness implies the $W^{1,p}$, $1\leq p\leq \infty$, norm equivalence on the discrete curves.
This shows $\kappa_{[T/\tau]} \leq C_0$ for $h\leq h_{C_0,\kappa_{*, [T/\tau]}}$. 

It remains to show $\kappa_{*, [T/\tau]} \leq C_0$. To this end, we need the following $W^{j, 2}$ version of \eqref{eq:Wj_inf} whose proof is similar and hence omitted here: For any $0\leq j\leq k$ and $0\leq m \leq [T/\tau]-1$,
\begin{align}\label{eq:Wj_2}
	&\|\hat X_{h,*}^{m + 1} -\hat X_{h,*}^{m}\|_{W^{j,2}_h(\Gamma_{h,\rm f}^0)} \notag\\ 
	&\leq C_\km h^{-j} \tau  (\tau + (1 + \ksm) h^{k})
	\notag\\
	&\quad
	+
	C_0 \tau [1 + j(j-1)\|\hat X_{h, *}^{m}\|_{W_h^{j-1,\infty}(\Gamma_{h,\rm f}^0)}^j]
	+
	C_0j \tau \|\hat X_{h, *}^{m}\|_{W_h^{j,2}(\Gamma_{h,\rm f}^0)}
	.
\end{align} 
Taking $j=k$,
\begin{align}
	&\|\hat X_{h,*}^{m + 1} -\hat X_{h,*}^{m}\|_{H^{k}_h(\Gamma_{h,\rm f}^0)} \notag\\ 
	&\leq C_\km h^{-k} \tau (\tau + (1 + \ksm) h^{k})
	+
	C_0 k \tau \|\hat X_{h, *}^{m}\|_{H_h^{k}(\Gamma_{h,\rm f}^0)}
	\notag\\
	&\leq
	C_0 \tau (1 + h^{-k}\tau) 
	+
	C_0 \tau (\ksm + \|\hat X_{h, *}^{m}\|_{H_h^{k}(\Gamma_{h,\rm f}^0)})
	, \notag
\end{align} 
where in the last step we have used the uniform boundedness $\kappa_m \leq C_0$ from \eqref{eq:k_bdd}.
Therefore, by using the triangle inequality, 
\begin{align}\label{eq:ks_Gronwall}
	&\kappa_{*,m+1} - \|  \hat X_{h,*}^0 \|_{W_h^{k, 2}(\Ghso)} \notag\\
	&= \max_{0 \leq r\leq m+1} \{\|  \hat X_{h,*}^{r} \|_{W_h^{k, 2}(\Ghso)} - \|  \hat X_{h,*}^0 \|_{W_h^{k, 2}(\Ghso)}
	\} 
	\notag\\
	&\le \sum_{r=0}^m\|\hat X_{h,*}^{r + 1} -\hat X_{h,*}^{r}\|_{W^{k,2}_h(\Gamma_{h,\rm f}^0)} \notag\\ 
	&\le 
	\sum_{r=0}^m C_0 \tau (1 + h^{-k}\tau) 
	 + \sum_{r=0}^m C_0 \tau(\kappa_{*,r} + \| \hat X_{h,*}^r \|_{W^{k, 2}_h(\Gamma_{h,\rm f}^0)}) \notag\\
	&\le 
	C_0 + \sum_{r=0}^m C_0 \tau \kappa_{*,r}
	\quad\mbox{(under condition $\tau\leq ch^k$)} ,
\end{align}
where we have used the trivial size relation $\|\hat X_{h, *}^{m}\|_{H_h^{k}(\Gamma_{h,\rm f}^0)} \leq \ksm$.
By applying the discrete Gr\"onwall's inequality, we obtain the following boundedness results under the mesh size condition $h\leq h_{C_0,\kappa_{*,[T/\tau]}}$: 
\begin{align}\label{eq:ks_bdd}
	\begin{split}
	\kappa_{*,[T/\tau]}
	&\le 
	C_0 ,
	\\
	\max_{0\le m\le [T/\tau]} \|  \hat X_{h,*}^{m} \|_{W_h^{k, 2}(\Ghso)} 
	&\le 
	C_0 . 
\end{split}
\end{align}
Consequently, the mesh size condition can be improved to $h\leq h_{C_0,C_0}$ where $h_{C_0,C_0}$ is a constant which is independent of $h$ and $\tau$.


The induction hypothesis is recovered by combining \eqref{eq:err_fin0} and \eqref{eq:ks_bdd}. Therefore, the proof of Theorem \ref{thm:main} is complete.
\begin{remark}\upshape \label{rmk:shape-reg}
	The time stepsize condition $\tau\leq ch^k$ is not essentially necessary: We observe from \eqref{eq:ks_Gronwall} and \eqref{eq:ks_bdd} that without the condition $\tau\leq ch^k$ we are still able to conclude from Gr\"onwall inequality $\kappa_{*,[T/\tau]} \leq C_0 h^{-k}\tau$, assuming the boundedness of the lower-order Sobolev norms \eqref{eq:k_bdd}. On the right-hand side of the error equation \eqref{eq:err_fin0}, the product structure still leads to the desired convergence rate $\kappa_{*,[T/\tau]} h^k \leq C_0 \tau$.
	This argument suggests that the time stepsize constraint can be eliminated at the leading order. However, due to the appearance of lower-order Sobolev norms of the parametrization map $\hat X_{h,*}^m$ in a multilinear form --- arising from the chain rule (see, for instance, the second term on the right-hand side of \eqref{eq:E1}) --- it remains unclear how to remove the stepsize restriction for such terms. Resolving this issue will be the subject of future research.
\end{remark}


\section{Numerical experiments}
\label{section:numerical}

We consider the evolution of a parameterized ellipse
\begin{align}\label{eq:dumbbell}
	\begin{pmatrix} x(\xi) \\ y(\xi) \end{pmatrix} 
	=
	\begin{pmatrix} \cos(2\pi\xi) \\ \sin(2\pi\xi) / 3 \end{pmatrix}, \quad \xi \in [0,1] ,
\end{align}
under the prescribed velocity field (depicted in Figure \ref{fig:ell-sp1})
\begin{align}\label{eq:exp_v}
	v = (v_x, v_y) = \Big(1 - \Big(\frac{x^2 + y^2}{x^2 + 9 y^2}\Big)^{1/2} \Big)
	\frac{(x, y)}{(x^2 + y^2)^{1/2}} .
\end{align}
This velocity field represents a radial transport with constant speed which only depends on $\theta := \tan^{-1}\frac{y}{x}$. Moreover, $v$ is constructed in such a way that at the final time $T=1$ the ellipse will evolve into a unit sphere. By construction, $v$ also has a non-trivial tangential component which will distort the mesh if we do not add any tangential smoothing velocity.

We test the convergence of the proposed transport BGN method in  \eqref{eq:BGN_tr1}--\eqref{eq:BGN_tr2} on the time interval $[0, T]$, with $T=1$, under the time stepsize condition $\tau = O(h^k)$. For the mesh sizes $h=2^{-4}, 2^{-5}, 2^{-6}, 2^{-7}$, we measure the $L_t^\infty L_x^2$ norm of the error $\ehm$ with the time steps $N_t = 2^4, 2^5, 2^6, 2^7$ for $k=1$; $N_t = 2^4, 2^6, 2^8, 2^{10}$ for $k=2$; and $N_t = 2^3, 2^6, 2^9, 2^{12}$ for $k=3$ respectively. The corresponding errors are plotted in Figure \ref{fig:a}. We observe evident $O(h^k)$ convergence for all $k=1,2,3$, which are consistent with our main results (Theorem \ref{thm:main}). 
In order to test the sharpness of the time stepsize condition, we record the $L_t^\infty L_x^2$ norm of the error in Figure \ref{fig:b} under large time steps and fixed small mesh size $h=2^{-10}$. According to the results, the necessity of the stepsize condition $\tau\leq c h^k$ is not observed.
\begin{figure}[htbp!]
	\centering
	\subfigure[Spatial discretization errors]{\label{fig:a}\includegraphics[width=0.48\linewidth]{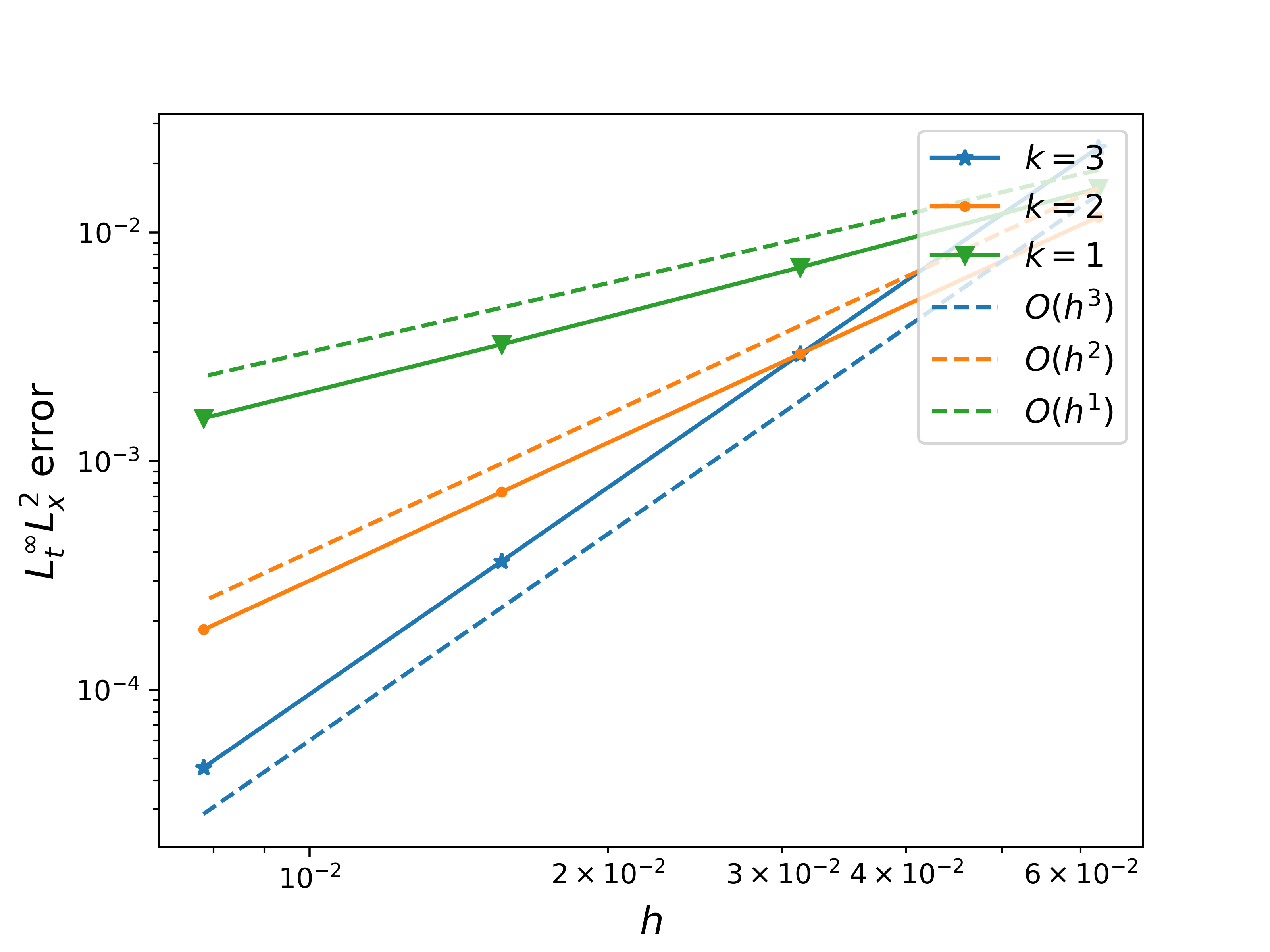}}
	\subfigure[Temporal discretization errors]{\label{fig:b}\includegraphics[width=0.48\linewidth]{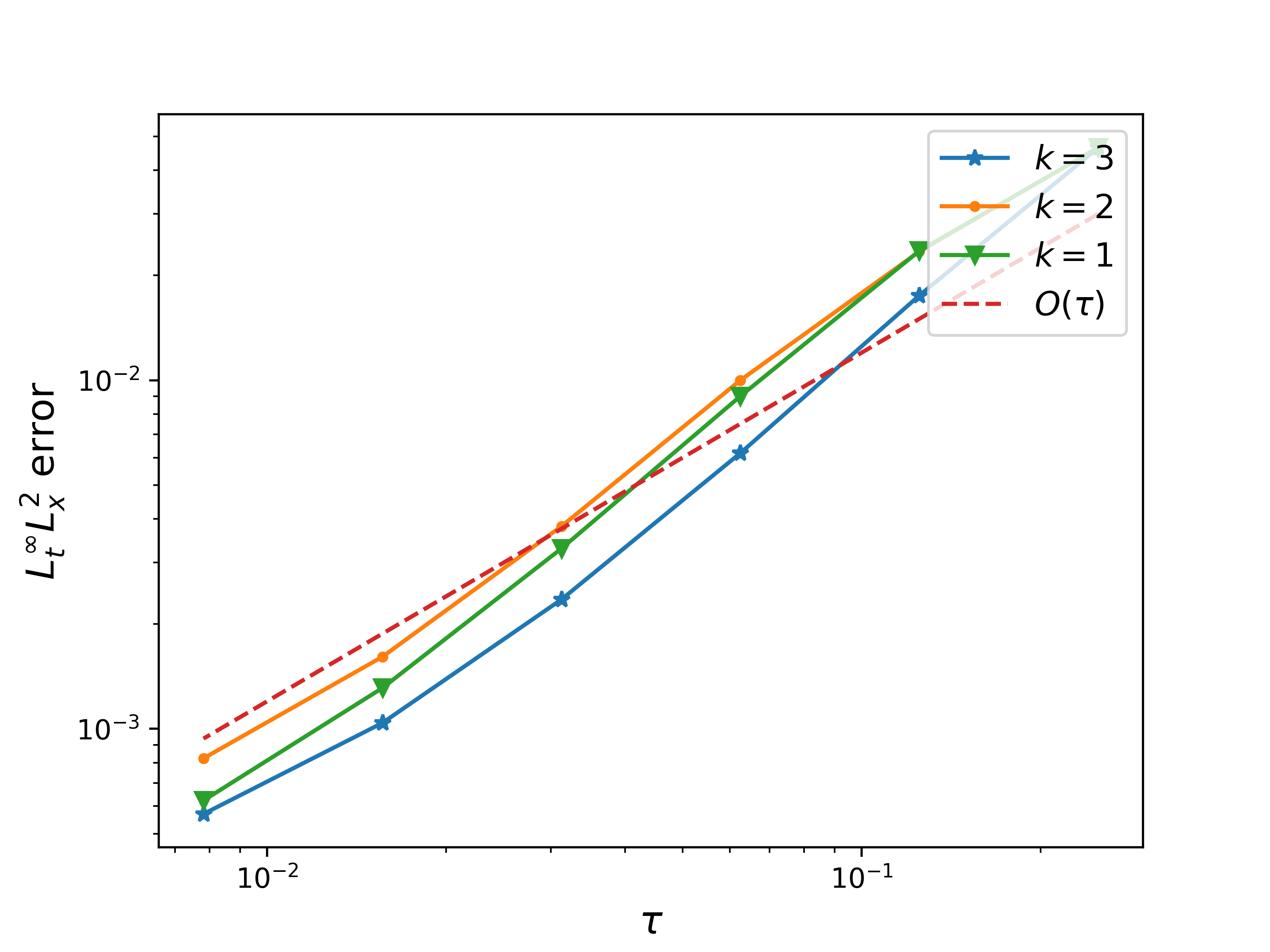}}
	\caption{Rate of convergence}
	\label{fig:e-rate}
\end{figure}

The tangential smoothing effect of the proposed method is shown in Figure \ref{fig:ratio1-ell2sp} where the mesh ratio $h_{\rm max} / h_{\rm min}$ decreases for the transport BGN method and increases for the evolution of \eqref{eq:exp_v} with no tangential redistribution.
The exact and discrete evolution of the curve \eqref{eq:dumbbell} under the velocity field defined in \eqref{eq:exp_v} is illustrated in Figure \ref{fig:ell-sp2} and Figure \ref{fig:ell-sp3}, respectively.

As a second example, we consider a velocity field that is rotation-dominated (see \cite[Example 4.3]{BHL24}):
\begin{align}\label{eq:exp_v1}
	v = (v_x, v_y) = 
	(1 - (x^2 + y^2)) (x,y)
	+
	\Big(1.2 - \frac{y}{(x^2 + y^2)^{1/2}} \Big) (-y, x)
	.
\end{align}
Figure \ref{fig:ATV} displays the velocity field \eqref{eq:exp_v1}, the mesh ratio, the exact evolution, and the discrete evolution for the same initial curve \eqref{eq:dumbbell}.
It can be observed that the proposed transport BGN method performs noticeably better than the plain exact evolution in terms of shape regularity and node distribution.
\begin{figure}[htbp!]
	\centering
	\subfigure[The velocity field $v$]{\label{fig:ell-sp1}\includegraphics[width=0.48\linewidth]{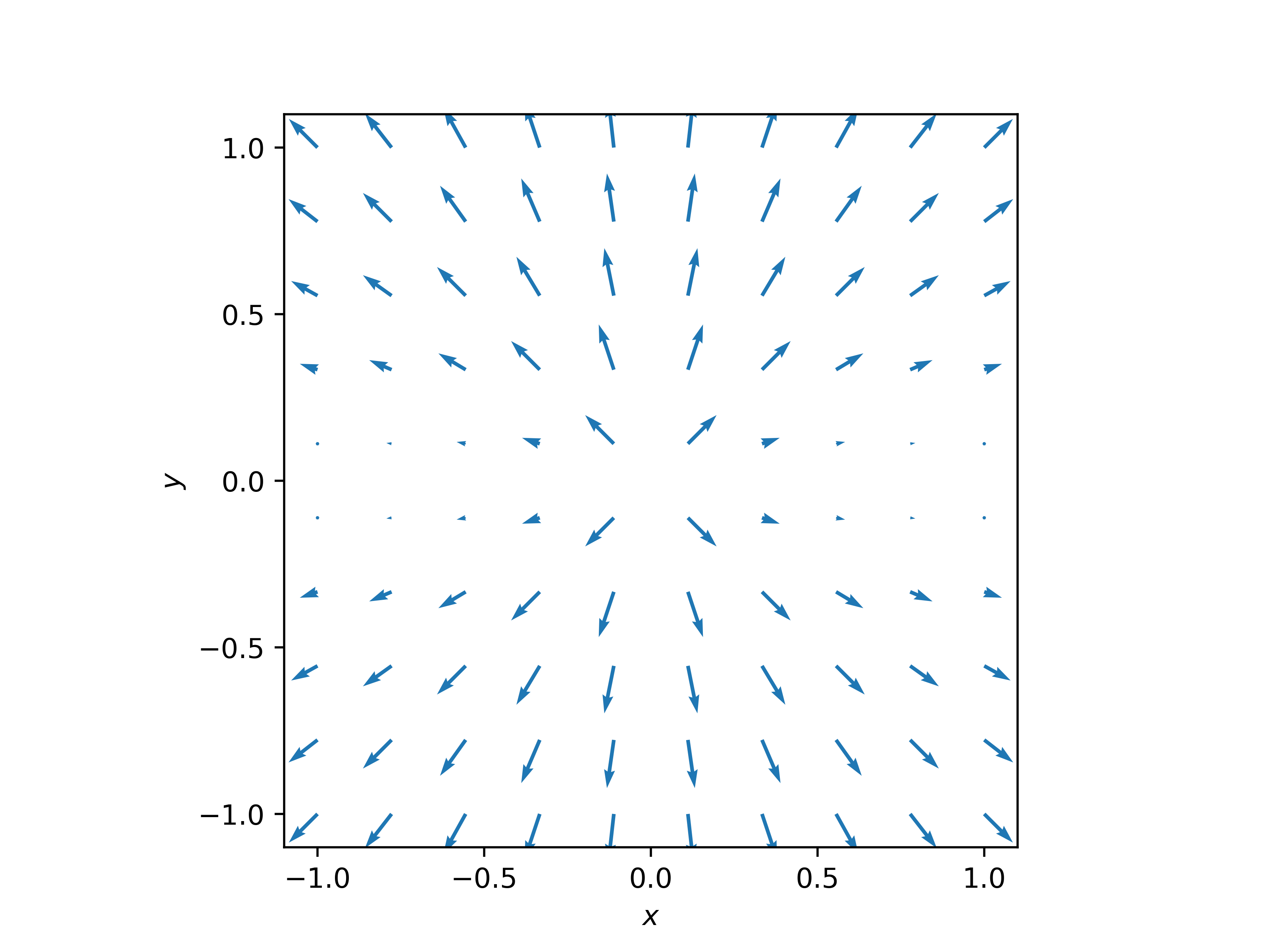}}
	\subfigure[Mesh ratio $h_{\rm max}/h_{\rm min}$]{\label{fig:ratio1-ell2sp}\includegraphics[width=0.48\linewidth]{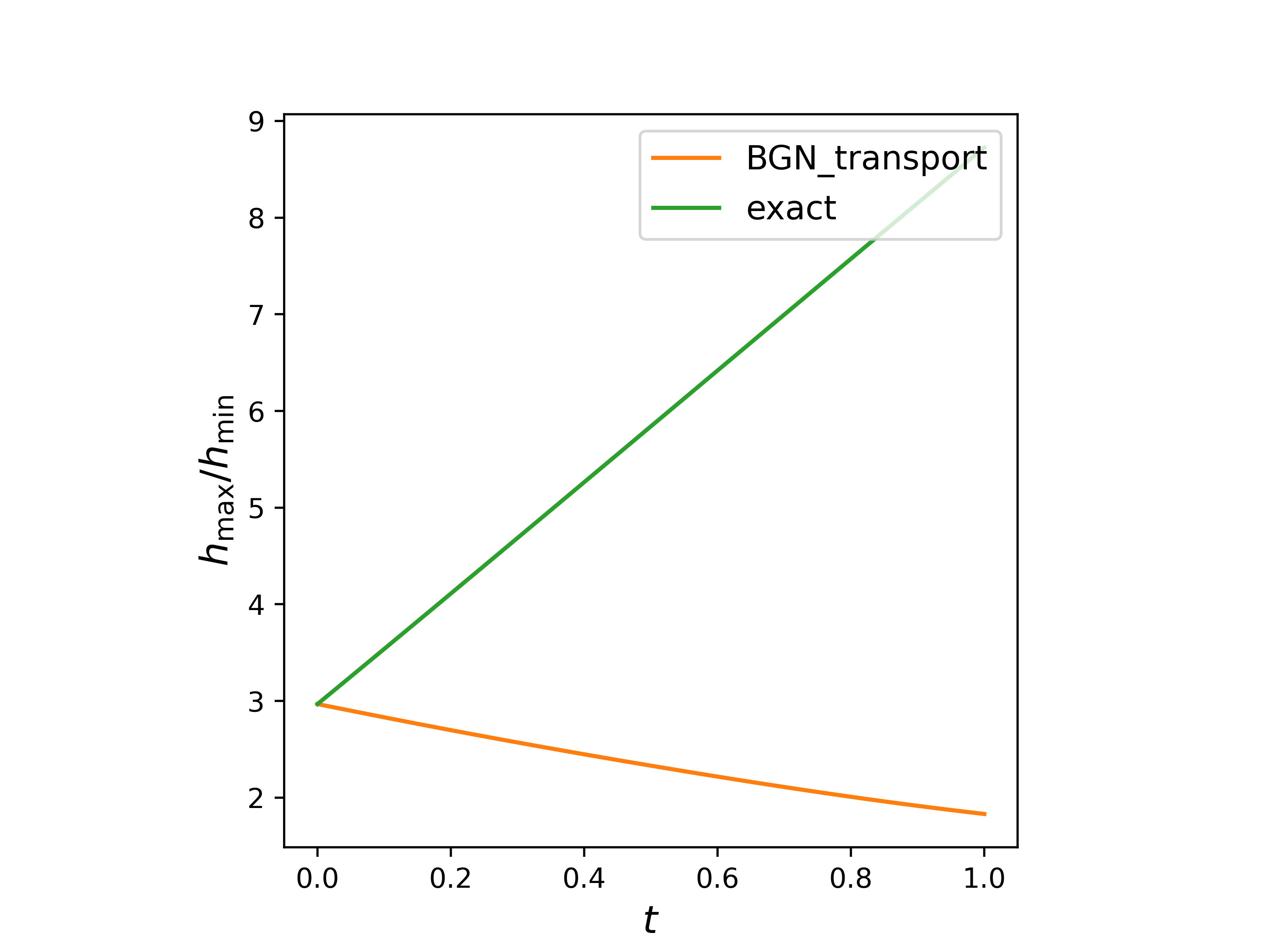}}
	\subfigure[Evolution under velocity $v$]{\label{fig:ell-sp2}\includegraphics[width=0.48\linewidth]{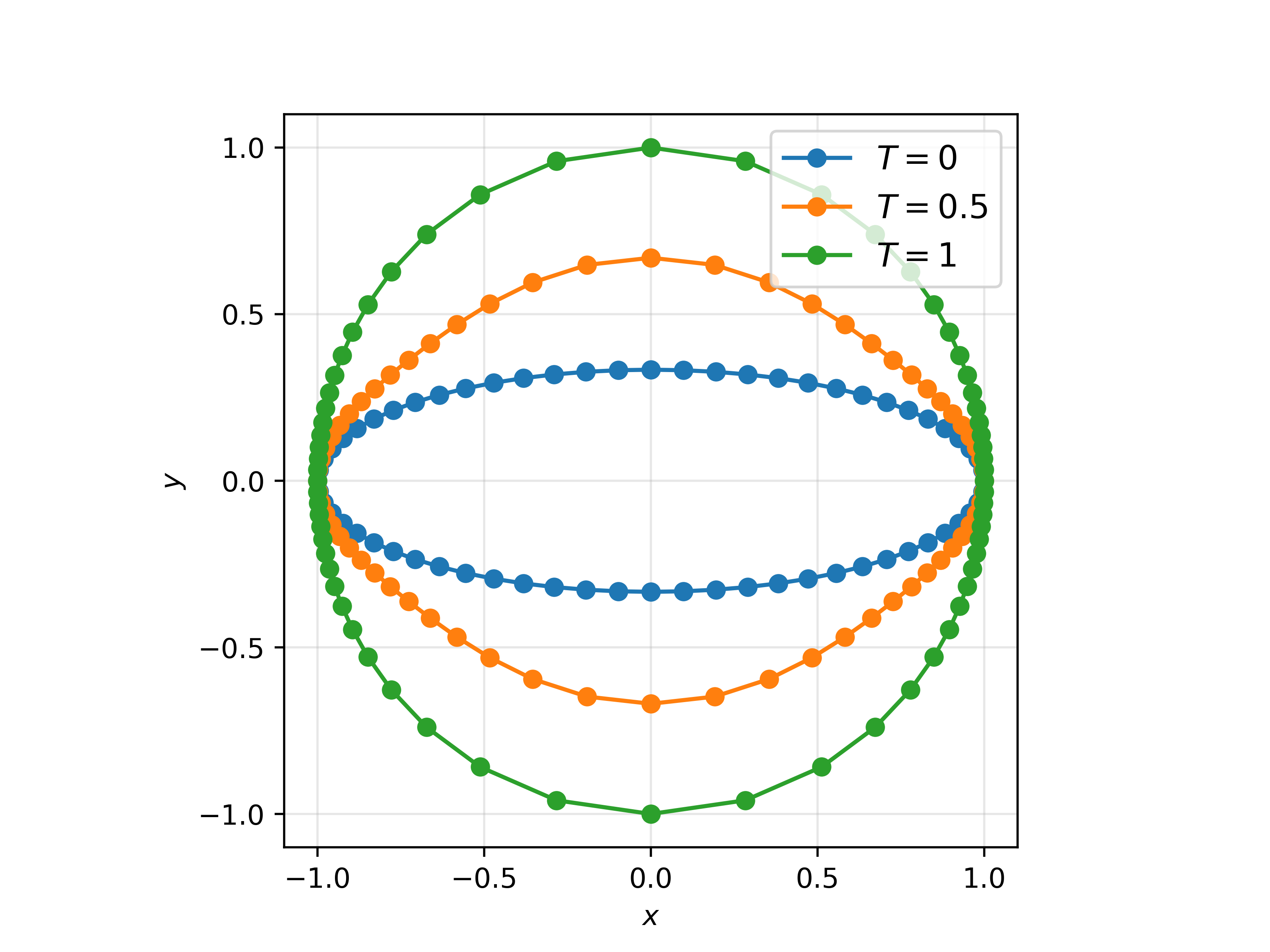}}
	\subfigure[Evolution in our method]{\label{fig:ell-sp3}\includegraphics[width=0.48\linewidth]{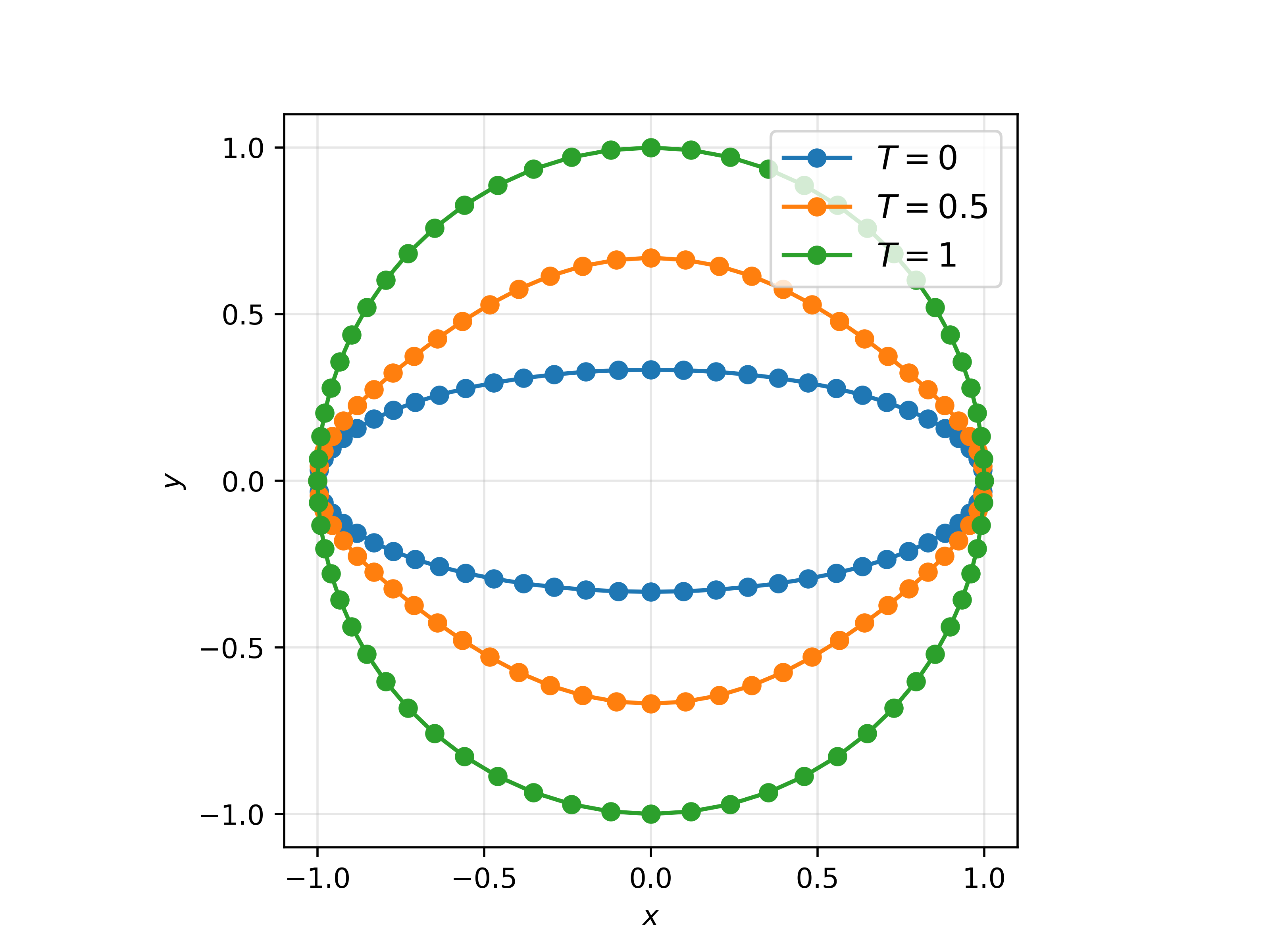}}
	\caption{Evolution of a curve $\Gamma(t)$ under velocity field $v$ in \eqref{eq:exp_v}}
	\label{fig:ell-sp}
\end{figure}

\begin{figure}[htbp!]
	\centering
	\subfigure[The velocity field $v$]{\label{fig:ATV1}\includegraphics[width=0.48\linewidth]{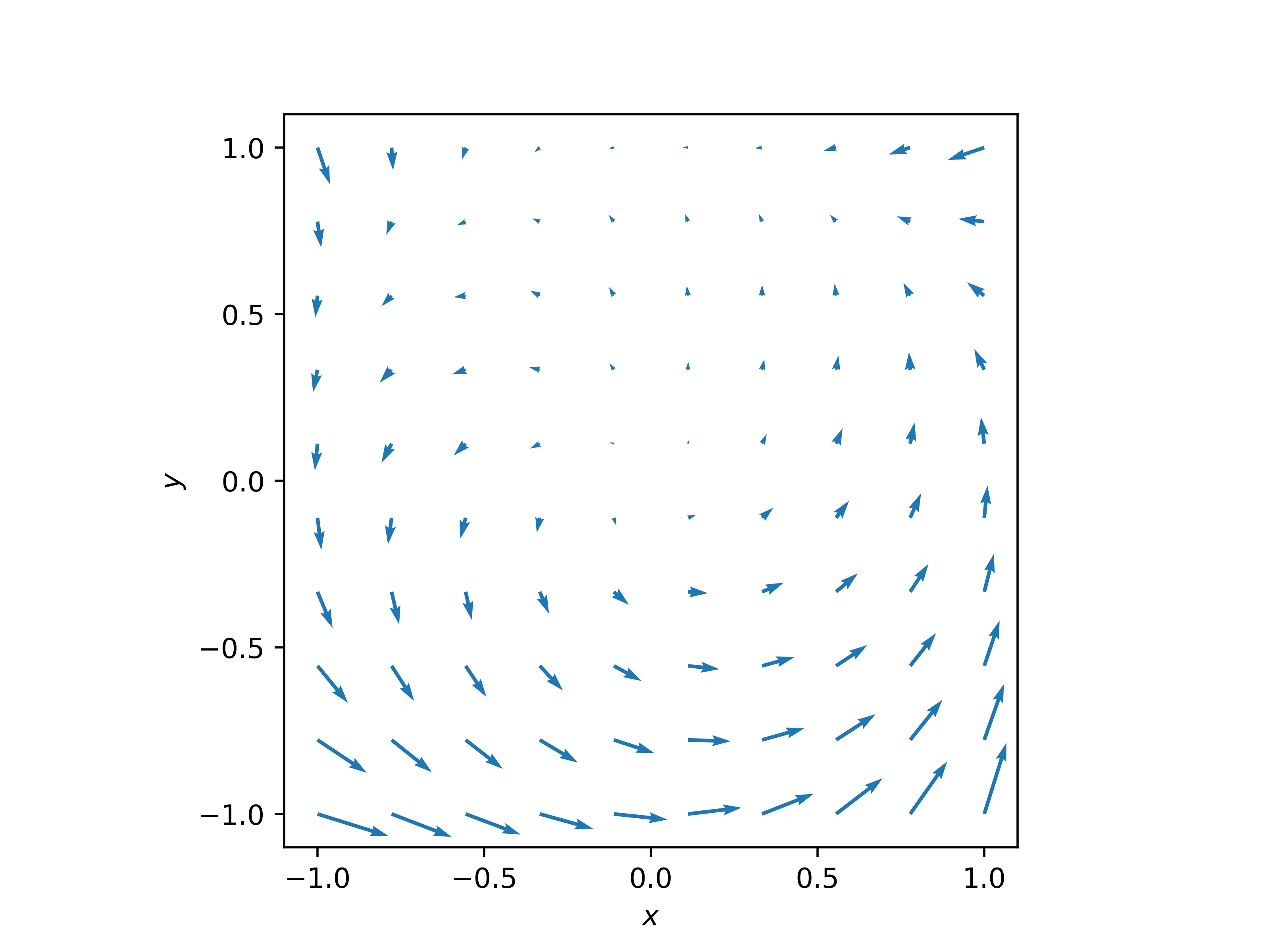}}
	\subfigure[Mesh ratio $h_{\rm max}/h_{\rm min}$]{\label{fig:ratio-ATV}\includegraphics[width=0.48\linewidth]{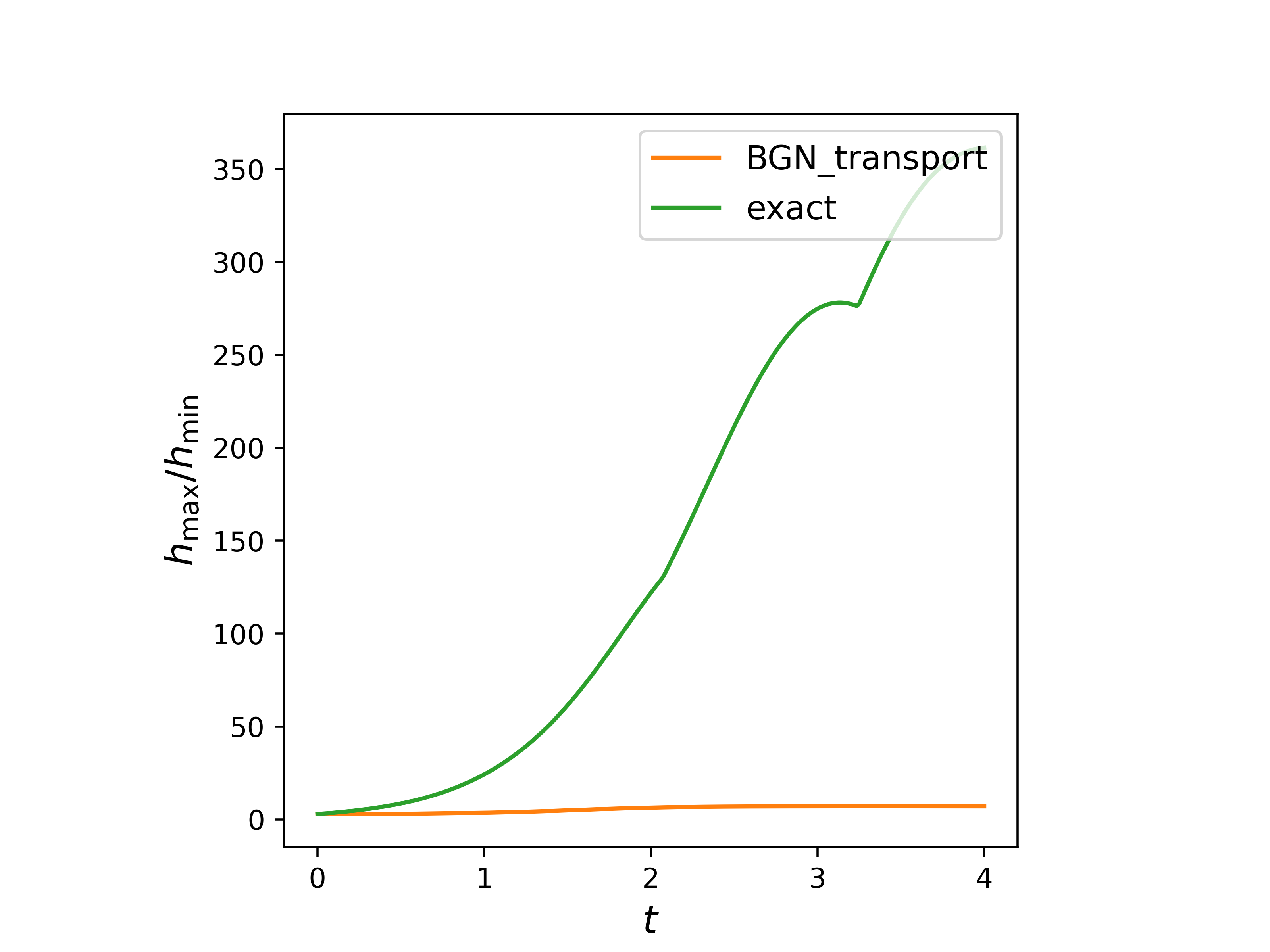}}
	\subfigure[Evolution under velocity $v$]{\label{fig:ATV2}\includegraphics[width=0.48\linewidth]{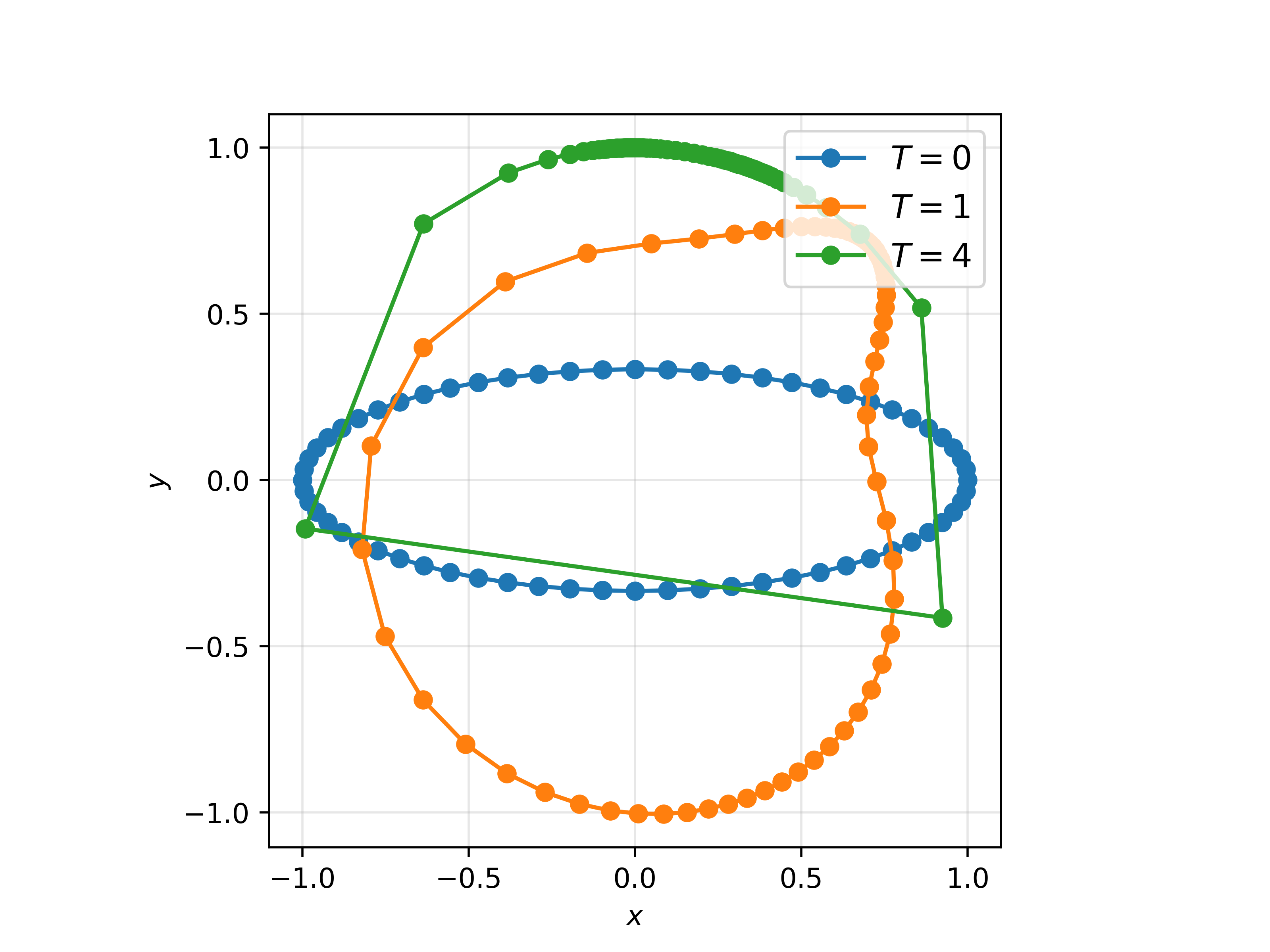}}
	\subfigure[Evolution in our method]{\label{fig:ATV3}\includegraphics[width=0.48\linewidth]{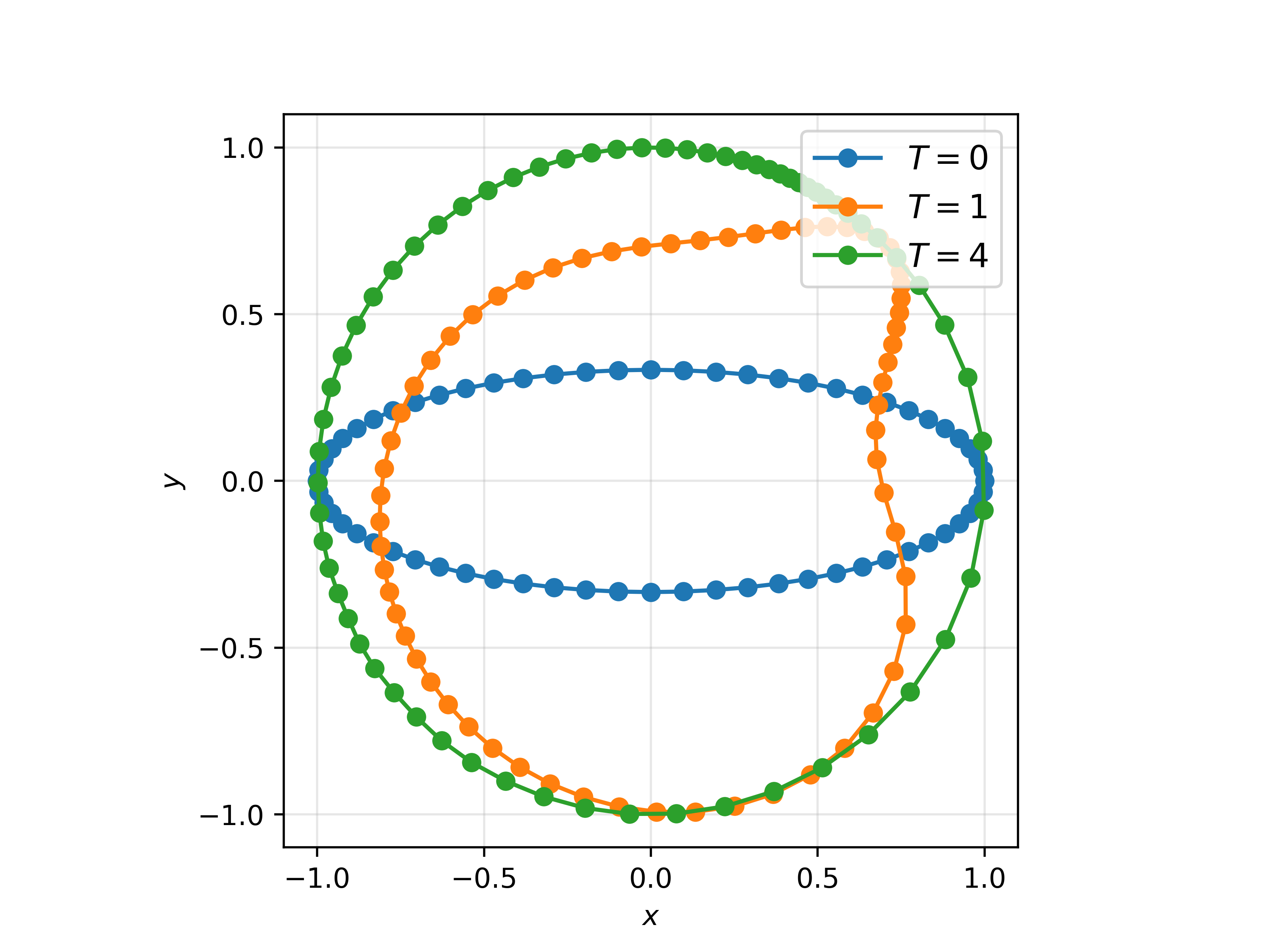}}
	\caption{Evolution of a curve $\Gamma(t)$ under velocity field $v$ in \eqref{eq:exp_v1}}
	\label{fig:ATV}
\end{figure}



\begin{appendices}
	
\section{Surface calculus formulas}
\label{sec:surf_calc}

Given a smooth curve $\Gamma$ (with or without boundary) in $\R^2$ and $u\in C^\infty(\Gamma)$, we denote by $\ud_i u, i = 1, 2,$ the $i$th component of the tangent vector $\nabla_\Gamma u$ in $\R^2$. The corresponding Leibniz rule, chain rule, integration-by-parts formula, commutators, and the evolution equation of normal vector, are summarized below.
\begin{lemma}\label{lemma:ud}
	Let $\Gamma$ and $ \Gamma^\prime$ be two smooth curves that are possibly open, such as smooth pieces of some finite element curves, and let $f, h \in C^\infty(\Gamma)$ and $g\in C^\infty(\Gamma^\prime; \Gamma)$ be given functions. Then the following results hold. 
	\begin{itemize}
		\item[1.]  $\ud_i(fh) = \ud_i f h + f\ud_i h$ on $\Gamma$.
		\item[2.] $\ud_i(g\circ f) = (\ud_j g\circ f)\, \ud_i f$ on $\Gamma'$.
		\item[3.] $\int_{\Gamma}f \ud_i h = -\int_{\Gamma}\ud_i f h + \int_{\Gamma}f h H n_i + \int_{\partial\Gamma}f h \mu_i$ where $n, \mu$ are the normal and co-normal (tangential) direction, respectively, and $H:=\ud_i n_i$ (with the Einstein notation) is the mean curvature, i.e. the trace of the second fundamental form.
		\item[4.] $\ud_i \ud_j f = \ud_j \ud_i f + n_i H_{jl} \ud_l f -  n_j H_{il} \ud_l f$, where $H_{ij} := \ud_i n_j = \ud_j n_i$.
		\item[5.] If $\Gamma$ evolves under the velocity field $v$, and $G_T := \bigcup_{t\in [0, T]}\Gamma(t) \times \{t\}$, then 
		$$\md(\ud_i f) = \ud_i (\md f )- (\ud_i v_j - n_i n_l \ud_j v_l)\ud_j f \quad\forall\, f \in C^2(G_T) ,$$
		where $\md$ denotes the material derivative with respect to $v$.
		\item[6.] If $f, h \in C^2(G_T)$ then 
		$$\frac{\d}{\d t}\int_{\Gamma} f h = \int_{\Gamma} \md f h + \int_{\Gamma} f\md h + \int_{\Gamma} f h (\nabla_\Gamma\cdot v).$$
		The divergence is defined as $\nabla_\Gamma\cdot v := \ud_i v_i$, which coincides with the intrinsic divergence on the curve if $v$ is a tangential vector field on $\Gamma$. Since the Lagrange interpolation commutes with the material time derivative, it is straightforward to check in the local coordinates that an analogous result also holds for the mass lumping integral, i.e., 
		$$
		\frac{\d}{\d t}\int_{\Gamma_h}^h \tilde f \tilde h = \int_{\Gamma_h}^h \md \tilde f \tilde h + \int_{\Gamma_h}^h \tilde f\md \tilde h + \int_{\Gamma_h}^h \tilde f \tilde h (\nabla_{\Gamma_h}\cdot v_h) ,
		$$
		where $\Gamma_h$ is a finite element curve moving with polynomial velocity $v_h\in S_h(\Gamma_h)^2$ (mass lumping is well defined on $\Gamma_h$), and $\tilde f,\tilde h$ are continuous functions defined on $\bigcup_{t\in [0, T]}\Gamma_h(t)\times\{t\}$.  
		\item[7.] The evolution of the unit normal vector $n$ of the curve $\Gamma$ with respect to the velocity field $v$ satisfies the following relation: 
		$$
		\md n_i = -\ud_i v_j n_j .
		$$
	\end{itemize}
\end{lemma}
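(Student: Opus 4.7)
The plan is to reduce every item to the classical Euclidean calculus in $\R^2$ by adopting the extension-and-project definition of the tangential derivative: for a smooth extension $\widetilde f$ of $f$ to a tubular neighborhood of $\Gamma$, set $\ud_i f := \partial_i \widetilde f - n_i n_j \partial_j \widetilde f$ (Einstein summation), independent of the chosen extension. With this definition, items 1 and 2 are essentially immediate. For item 1 I would expand $\ud_i(fh)=\partial_i(\widetilde f \widetilde h)-n_i n_j \partial_j(\widetilde f \widetilde h)$, apply the Euclidean Leibniz rule to both terms, and regroup. For item 2 the same idea works: expand $\partial_i(\widetilde g\circ\widetilde f)$ by the Euclidean chain rule and observe that the $n_i n_j \partial_j$ subtraction converts the ambient gradient of $g$ into the tangential one, leaving $(\ud_j g\circ f)\ud_i f$. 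Item 7 is then a one-liner: differentiating $n\cdot n=1$ along the flow gives $n_j\md n_j=0$, so $\md n$ is tangential; a short computation using item 5 (with $f=\text{id}$) and the fact that $n$ is the restriction of a unit extension pins down $\md n_i = -\ud_i v_j \, n_j$.

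For item 4 I would apply item 1 twice and track the extra terms generated by the normal-subtraction in $\ud_j$. Concretely, write $\ud_i\ud_j f = \partial_i(\partial_j\widetilde f - n_j n_l \partial_l\widetilde f) - n_i n_k \partial_k(\partial_j\widetilde f - n_j n_l \partial_l\widetilde f)$, use symmetry of $\partial_i\partial_j$ in the ambient space, and group everything not symmetric in $(i,j)$; the leftover terms are precisely $n_i H_{jl}\ud_l f - n_j H_{il}\ud_l f$ upon recognizing $H_{ij}=\ud_i n_j=\ud_j n_i$ (symmetry of the second fundamental form, which itself comes from $\ud_i(n_j n_j)=0$ applied on each coordinate). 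For item 5, apply the material derivative $\md$ to the extension formula of $\ud_i f$, commute $\md$ with $\partial_i$ via the identity $\md\partial_i=\partial_i\md - (\partial_i v_j)\partial_j$ (which is standard for material derivatives extended off the surface along $v$), and collect the terms; the combination $(\ud_i v_j - n_i n_l\ud_j v_l)\ud_j f$ arises from the non-tangential contributions.

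Item 3 is the most substantive step and where I would concentrate the calculation. I would apply the classical Euclidean divergence theorem in $\R^2$ to the compactly supported tangential vector field along $\Gamma$ defined by $fh(e_i - n_i n)$, using $\Gamma$ as a $1$-dimensional submanifold. Rewriting $f\ud_i h = f(\partial_i\widetilde h - n_i n_j\partial_j\widetilde h)$ and using the Leibniz rule of item~1 produces $\ud_i(fh) - \ud_i f\, h$; integrating $\ud_i(fh)$ over $\Gamma$ yields the curvature term $\int_\Gamma fh\, Hn_i$ (since $\ud_i n_i=H$) plus the co-normal boundary term $\int_{\partial\Gamma}fh\mu_i$, which is the surface divergence theorem. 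The key algebraic identity needed is $\int_\Gamma \ud_i\varphi = \int_\Gamma \varphi\, Hn_i + \int_{\partial\Gamma}\varphi\mu_i$; I would prove this first using a partition of unity and a local arc-length parametrization on each chart, where tangential differentiation reduces to $\partial_s$ plus a curvature correction.

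Finally, item 6 (transport theorem) I would split into the smooth and mass-lumped versions. The smooth version is the classical Reynolds identity for evolving hypersurfaces: parametrize $\Gamma(t)$ via the flow $\Phi_t$ of $v$ from $\Gamma(0)$, pull the integral back to $\Gamma(0)$ where it becomes $\int_{\Gamma(0)}(fh)\circ\Phi_t \, J_t$ with $J_t$ the Jacobian of the pull-back, differentiate using $\partial_t J_t = (\nabla_\Gamma\cdot v)\circ\Phi_t\, J_t$, and push forward again. For the mass-lumping version, the same argument goes through element by element: on each curved element $K(t)\subset\Gamma_h(t)$ the lumping weights are the Gauss--Lobatto quadrature weights applied to the Jacobian of the polynomial parametrization $F_K(t)$, and Lagrange interpolation commutes with $\md$ (the nodes move with $v_h$), so differentiating under the sum reproduces the same three-term structure with $\nabla_{\Gamma_h}\cdot v_h$ in place of $\nabla_\Gamma\cdot v$. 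The main obstacle throughout is bookkeeping of normal versus tangential components; once the projected-gradient formalism is set up cleanly and item~3 is established, the remaining items follow by direct but careful computation.
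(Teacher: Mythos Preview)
Your proposal is correct and follows the standard route for these surface-calculus identities; the paper itself does not give a proof but simply cites the literature (items 1--2 to the local formula of $\ud$ in \cite{BL24FOCM}, item 3 to \cite[Theorem~2.10]{Dziuk2013b}, items 4--5 to \cite{Dz13b}, and items 6--7 to \cite{DE2007IJNA} and \cite{Mantegazza2011}), and your sketches are precisely the computations one finds in those references. One minor remark: your derivation of item~7 via item~5 applied to $f=\mathrm{id}$ works but is somewhat roundabout; the cited source (Mantegazza) instead differentiates the parametric formula for $n$ directly, which is cleaner and avoids any appearance of circularity in the ordering of the items.
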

\begin{proof}
	The first two relations are obvious from the local formula of $\ud$ (cf. \cite[Eq. (5.1)]{BL24FOCM}). The third relation is shown in \cite[Theorem 2.10]{Dziuk2013b}. The fourth and fifth equalities are proved in \cite[Lemma 2.4 and 2.6]{Dz13b}, and the proof of the sixth and last formulae can be found in \cite[Appendix A]{DE2007IJNA} and \cite[p. 33]{Mantegazza2011} respectively.
\end{proof}

\section{Discrete norms}
\label{sec:disc-norm}

Since the weights of the Gauss--Lobatto quadrature are positive, the discrete $L^p$ norm defined by 
$$
\| f \|_{L^p_h(\Ghsm)}  := \Big( \int_\Ghsm^h |f|^p \Big)^{\frac1p} 
= \Big( \sum\limits_{K \subset \Ghsm} \int_{K_{\rm f}^0} I_K \big( |f \circ F_K |^p |\nabla_{K_{\rm f}^0} F_K| \big) \Big)^{\frac1p}
$$ 
is indeed a norm on the finite element space $S_h(\Ghsm)$ because $\| f \|_{L^p_h(\Ghsm)}=0$ iff $f=0$ at all the nodes of $\Ghsm$. In addition, this discrete $L^p$ norm is also well defined for functions which are piecewise continuous on $\Ghsm$. Its basic properties are summarized below. 

\begin{lemma}\label{lemma:lump}
	The following relations hold for all finite element functions $f_h\in S_h(\Ghsm)$ and piecewise continuous functions $w_1, w_2, w_3$ on $\Ghsm$: 
	\begin{align}
		\| f_h \|_{L^p_h(\Ghsm)} &\sim \| f_h \|_{L^p(\Ghsm)} , \notag\\
		\| \nabla_\Ghsm f_h \|_{L^p_h(\Ghsm)} &\sim \| \nabla_\Ghsm f_h \|_{L^p(\Ghsm)} , \notag\\
		\Big|\int_\Ghsm^h w_1 w_2 w_3 \Big| &\lesssim \| w_1 \|_{L^\infty(\Ghsm)} \| w_2 \|_{L^2_h(\Ghsm)} \| w_3 \|_{L^2_h(\Ghsm)}  \notag .
	\end{align}
\end{lemma}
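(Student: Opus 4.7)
The plan is to reduce every claim to an elementwise statement on the flat reference segment $K_{\rm f}^0$, where the parametrization $F_K : K_{\rm f}^0 \to K$ is a polynomial map of fixed degree $k$, and then exploit the equivalence of norms on the finite-dimensional polynomial space $\mathbb{P}^k(K_{\rm f}^0)$. The shape regularity \eqref{P} guarantees that $|\nabla_{K_{\rm f}^0} F_K|$ is bounded above and below by constants depending only on $\kappa_m$, so that the Jacobian weight can be pulled out of every integral without distortion.

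For the first equivalence, I would write
\begin{align*}
\| f_h \|_{L^p(\Ghsm)}^p &= \sum_{K\subset\Ghsm} \int_{K_{\rm f}^0} |f_h\circ F_K|^p \,|\nabla_{K_{\rm f}^0} F_K| , \\
\| f_h \|_{L^p_h(\Ghsm)}^p &= \sum_{K\subset\Ghsm} \int_{K_{\rm f}^0} I_h^{GL}\bigl( |f_h\circ F_K|^p \,|\nabla_{K_{\rm f}^0} F_K| \bigr) ,
\end{align*}
and observe that on $K_{\rm f}^0$ both quantities $P\mapsto \bigl(\int_{K_{\rm f}^0} |P|^p\bigr)^{1/p}$ and $P\mapsto \bigl(\int_{K_{\rm f}^0} I_h^{GL}|P|^p \bigr)^{1/p}$ (after factoring the strictly positive weight $|\nabla_{K_{\rm f}^0} F_K|$) are norms on $\mathbb{P}^k(K_{\rm f}^0)$, since the Gauss--Lobatto nodes of order $k$ include the full set of Lagrange nodes and the quadrature weights are strictly positive. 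Finite-dimensionality of $\mathbb{P}^k(K_{\rm f}^0)$ and a scaling argument to the standard reference segment give equivalence with constants independent of $h$; summing over elements yields the first estimate. For the second equivalence, $\nabla_{\Ghsm} f_h$ restricted to $K$ is a rational function whose pull-back to $K_{\rm f}^0$ has the form $(\nabla_{K_{\rm f}^0} F_K)^{-1}$ times a polynomial of degree $k-1$; using again the uniform two-sided bound on $|\nabla_{K_{\rm f}^0} F_K|$, the same reference-element argument works on the polynomial factor.

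For the third inequality, the Gauss--Lobatto quadrature has positive weights $\omega_j > 0$, so after pulling back to $K_{\rm f}^0$ we may factor out $\|w_1\|_{L^\infty(\Ghsm)}$ pointwise at each quadrature node and apply the discrete Cauchy--Schwarz inequality
\begin{align*}
\Big| \sum_j \omega_j\, a_j b_j c_j \,|\nabla F_K|(y_j) \Big|
\leq \|a\|_{\ell^\infty} \Big(\sum_j \omega_j b_j^2 |\nabla F_K|\Big)^{1/2} \Big(\sum_j \omega_j c_j^2 |\nabla F_K|\Big)^{1/2},
\end{align*}
with $a_j = w_1(F_K(y_j))$, etc. Summing over elements and using Cauchy--Schwarz once more on the element index returns the desired bound in terms of the discrete $L^2_h$ norms of $w_2,w_3$.

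No real obstacle is expected since all three statements are essentially local, quadrature-based facts; the only mild care needed is to track that the Gauss--Lobatto rule is exact or at least positive-definite on the polynomial spaces involved (degree $2k$ for $|f_h|^2 |\nabla F_K|$ when $f_h$ has degree $k$, which is covered by a GL rule with $k+1$ nodes having exactness degree $2k-1$ together with positivity for the small residual), and that the constants arising in the finite-dimensional norm equivalence on the reference segment depend only on $k$ and $\kappa_m$ but not on $h$. Once these bookkeeping points are settled, the three assertions follow in a single, fairly mechanical argument.
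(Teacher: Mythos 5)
Your proposal is correct and is the natural reference-element argument (finite-dimensional norm equivalence on $\mathbb{P}^k(K_{\rm f}^0)$ and $\mathbb{P}^{k-1}(K_{\rm f}^0)$, shape-regular two-sided bounds on the Jacobian, discrete Cauchy--Schwarz); the paper in fact gives no proof of this lemma, offering only the preceding remark that the Gauss--Lobatto weights are positive, which is precisely the starting point your argument elaborates.
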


\section{Averaged normal vectors}
\label{sec:super1}

On the interpolated curve $\Ghsm$ we can define the averaged normal vector $\bar n_{h,*}^m$ similarly as $\bar n_h^m$ on $\Gamma_h^m$, which is defined in \eqref{eq:bar_n}. Namely, we define $\bar n_{h,*}^m\in S_h(\Ghsm)^2$ to be the unique finite element function satisfying the following relation: 
\begin{align}\label{eq:bar_n*}
	\int_\Ghsm^h \bar n_{h,*}^m\cdot \phi_h = \int_\Ghsm^h \hat n_{h,*}^m\cdot \phi_h 
	\quad\forall\, \phi_h \in S_h(\Ghsm)^2.
\end{align}
Since \eqref{eq:bar_n*} only involves nodal values, it follows that 
\begin{align}\label{eq:bar_n*1}
	\int_\Ghsm^h \bar n_{h,*}^m\cdot \phi = \int_\Ghsm^h \hat n_{h,*}^m\cdot \phi \quad\forall\, \phi\in C(\Ghsm)^2.
\end{align}
It is straightforward to verify the following relations: 
\begin{align}\label{bar-n-hat-n}
	\bar n_{h,*}^m(p) &= \hat n_{h,*}^m(p) \hspace{91pt} \mbox{if $p$ is an interior node of an element} , \notag \\
	\bar n_{h,*}^m(p) &= \frac{| w_{K_1}(p) | |K_{1\rm f}^0| \,\, \hat n_{h,*}^m(p-) }{| w_{K_1}(p)||K_{1\rm f}^0| + | w_{K_2}(p)||K_{2\rm f}^0|} 
	+  \frac{| w_{K_2}(p) ||K_{2\rm f}^0| \,\, \hat n_{h,*}^m(p+) }{| w_{K_1}(p)| |K_{1\rm f}^0|+ | w_{K_2}(p)||K_{2\rm f}^0|}   \\
	&\hspace{135pt}\mbox{if $p=K_1\cap K_2$ for two elements $K_1$ and $K_2$} , \notag
\end{align}
where $w_{K}(p)= \nabla_{K_{\rm f}^0} F_{K}\circ F_{K}^{-1} (p) $ for $p\in K$, with $\hat n_{h,*}^m(p-) $ and $\hat n_{h,*}^m(p+) $ denoting the left (from $K_1$) and right (from $K_2$) values of the piecewisely defined normal vector $\hat n_{h,*}^m$ on $\Ghsm$. Therefore, the amplitude of $\bar n_{h,*}^m$ at the nodes satisfies the following estimates:
\begin{align}\label{eq:nhs_bar_len}
	\begin{split}
		|\bar n_{h,*}^m(p)| &= 1 \hspace{89pt}\mbox{if $p$ is an interior node of an element} , \\
		| \bar n_{h,*}^m(p) | &\leq 1, \quad \big| | \bar n_{h,*}^m(p) | - 1\big| \le C | \hat n_{h,*}^m(p+) - \hat n_{h,*}^m(p-) |^2 \le C_{\kappa_m} h^{2k} \\
		&\hspace{107pt}\mbox{if $p=K_1\cap K_2$ for two elements $K_1$ and $K_2$}  .
	\end{split}
\end{align}

By treating the shape regularity constant explicitly, a slight modification to \cite[Lemma 3.8]{BL24} leads to the following consistency estimates for the averaged normal vectors.
\begin{lemma}\label{lemma:n_bar_app}
	The following approximation properties of $\nbhsm$ and $\nbhm$ hold:
	\begin{align}
		\begin{aligned}
			\| \nbhsm - I_h \nsm \|_{L^{{2}}(\Ghsm)} &\lesssim {(1 + \ksm) h^k} , \\
			\| \nbhm - I_h \nsm \|_{L^2(\Ghsm)} &\lesssim (1 + \ksm) h^k + \|\nabla_\Ghsm \ehm \|_{L^2(\Ghsm)} , \\ 
			\| \nbhsm - \hat n_{h,*}^m \|_{L^2(\Ghsm)} &\lesssim (1 + \ksm) h^k . 
		\end{aligned}
	\end{align}
\end{lemma}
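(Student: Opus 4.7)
The plan is to establish the three estimates by a careful analysis at the element-junction nodes where $\hat n_{h,*}^m$ and $n_h^m$ are discontinuous, combined with standard trace-type inverse inequalities. As the paper itself notes, this is essentially the argument of \cite[Lemma 3.8]{BL24}, and the ``slight modification'' amounts to tracking the dependence on the shape-regularity constant $\ksm$ explicitly through \eqref{normal-intpl}.

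First I would prove the third estimate. From \eqref{bar-n-hat-n}, the piecewise polynomial $\nbhsm - \hat n_{h,*}^m$ vanishes at every interior Lagrange node of every element, while at each junction node $p$ its nodal value on each adjacent element is a convex-combination multiple of the normal jump $\alpha_p := \hat n_{h,*}^m(p+) - \hat n_{h,*}^m(p-)$. A Lagrange-basis estimate then gives $\|\nbhsm - \hat n_{h,*}^m\|_{L^2(K)}^2 \lesssim h(|\alpha_{p_{K,1}}|^2 + |\alpha_{p_{K,2}}|^2)$ for each element $K$ with endpoints $p_{K,1}, p_{K,2}$, and summing yields $\|\nbhsm - \hat n_{h,*}^m\|_{L^2(\Ghsm)}^2 \lesssim h \sum_p |\alpha_p|^2$. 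I would then write $\alpha_p = [\hat n_{h,*}^m(p+) - n_*^m(p)] - [\hat n_{h,*}^m(p-) - n_*^m(p)]$ and apply the scaled trace inequality $|f(p)|^2 \lesssim h^{-1}\|f\|_{L^2(K)}^2 + h\|f'\|_{L^2(K)}^2$ to each bracket on its element. Summing over elements and using the bulk bound \eqref{normal-intpl} together with its element-wise $H^1$ counterpart $\|\nabla_\Ghsm(\hat n_{h,*}^m - n_*^m)\|_{L^2(\Ghsm)} \lesssim (1+\ksm)h^{k-1}$ (obtained by the standard inverse inequality combined with \eqref{normal-intpl} and interpolation of $n_*^m$) gives $\sum_p |\alpha_p|^2 \lesssim (1+\ksm)^2 h^{2k-1}$, which plugged back produces the desired $h^{2k}$ rate.

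For the first estimate, the triangle inequality $\|\nbhsm - I_h n_*^m\|_{L^2(\Ghsm)} \le \|\nbhsm - \hat n_{h,*}^m\|_{L^2(\Ghsm)} + \|\hat n_{h,*}^m - n_*^m\|_{L^2(\Ghsm)} + \|n_*^m - I_h n_*^m\|_{L^2(\Ghsm)}$ handles everything, with the three terms controlled respectively by the third estimate just proved, by \eqref{normal-intpl}, and by the standard interpolation bound \eqref{interpol-error-f} applied to the smooth normal $n^m$. For the second estimate I would insert $\nbhsm$ and write $\|\nbhm - I_h n_*^m\|_{L^2(\Ghsm)} \le \|\nbhm - \nbhsm\|_{L^2(\Ghsm)} + \|\nbhsm - I_h n_*^m\|_{L^2(\Ghsm)}$. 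Since $\nbhm$ and $\nbhsm$ are constructed by the very same nodal averaging formula \eqref{bar-n-hat-n} but starting from the piecewise unit normals of $\Ghm$ and $\Ghsm$ respectively, the nodal difference $\nbhm(p) - \nbhsm(p)$ is controlled by $|n_h^m(p\pm) - \hat n_{h,*}^m(p\pm)|$, which in turn is dominated by the tangent-vector discrepancy and hence by $|\nabla_\Ghsm \ehm|$ at $p$. A further application of the scaled trace inequality converts this into $\|\nabla_\Ghsm \ehm\|_{L^2(\Ghsm)}$ up to a higher-order $(1+\ksm)h^k$ remainder.

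The main obstacle is the bookkeeping in the first step: one must ensure that the trace-inverse chain $|\alpha_p|^2 \lesssim h^{-1}\|\cdot\|_{L^2(K)}^2 + h\|\cdot'\|_{L^2(K)}^2$ delivers the stated $(1+\ksm)h^k$ rate rather than the naive $(1+\ksm)h^{k-1/2}$ that would result from feeding the $L^\infty$ bound in \eqref{normal-intpl} into the nodal sum. The saving comes from balancing $h^{-1}$ against the element-local $L^2$ estimate, together with the parallel $H^1$ control of $\hat n_{h,*}^m - n_*^m$, both of which must be justified with the $\ksm$ dependence kept explicit, as this is exactly what forces the factor $(1+\ksm)$ (and not a larger power of $\ksm$) in all three inequalities.
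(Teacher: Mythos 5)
Your reconstruction follows the same route the paper points to: the paper gives no independent proof, but simply cites \cite[Lemma 3.8]{BL24} ``with the shape regularity constant treated explicitly,'' and your sketch is a faithful account of what that argument with $\ksm$ tracked must look like. In particular, the balancing you single out --- using the scaled trace inequality against the element-local $L^2$ and piecewise $H^1$ bounds for $\hat n_{h,*}^m - n_*^m$ to get $\sum_p|\alpha_p|^2\lesssim(1+\ksm)^2h^{2k-1}$, rather than feeding the $L^\infty$ nodal bound $(1+\ksm)h^{k-1/2}$ into the sum --- is exactly the crux that produces $(1+\ksm)h^k$ rather than $(1+\ksm)h^{k-1/2}$.

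Two small technical points to make precise in a complete writeup, neither of which changes the conclusion or the rate. First, $\hat n_{h,*}^m$ is a rational (not polynomial) function of the parameter, so $\nbhsm - \hat n_{h,*}^m$ is not a finite element function; the ``Lagrange-basis estimate'' and nodal-jump argument should be applied to the finite element function $\nbhsm - I_h\hat n_{h,*}^m$, with the interpolation remainder $\|\hat n_{h,*}^m - I_h\hat n_{h,*}^m\|_{L^2(\Ghsm)}\lesssim(1+\ksm)h^{k+1}$ absorbed separately; the same splitting through $I_h n_*^m$ is needed to justify your piecewise $H^1$ bound for $\hat n_{h,*}^m - n_*^m$. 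Second, in the second estimate the averaging weights $|w_{K_i}(p)||K_{i,{\rm f}}^0|$ in \eqref{bar-n-hat-n} are computed on $\Ghm$ for $\nbhm$ and on $\Ghsm$ for $\nbhsm$, so the nodal difference $\nbhm(p)-\nbhsm(p)$ picks up not only $|n_h^m(p\pm)-\hat n_{h,*}^m(p\pm)|$ but also the weight discrepancy, which is likewise $O(|\nabla_\Ghsm\ehm|)$ and folds into the same $\|\nabla_\Ghsm\ehm\|_{L^2(\Ghsm)}$ term.
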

	
\section{Super-approximation estimates}
\label{sec:super}

In the framework of the projection error and Gauss--Lobatto mass lumping, the following standard super-approximation results can be found in \cite[Section 3.5]{BL24FOCM} and \cite[Lemma 3.6]{GLW22}.
\begin{lemma}\label{lemma:super_conv}
	The following estimates hold for any piecewise smooth function $f$ and finite element functions $\phi_h,v_h, w_h\in S_h(\Ghsm)$: 
	\begin{align*}
		\| (1 - I_h)(f \phi_h) \|_{L^2(\Ghsm)} &\lesssim \| f \|_{W_h^{k+1,\infty}(\Ghsm)} h \| \phi_h \|_{L^{2}(\Ghsm)} , \notag\\
		\| \nabla_\Ghsm (1 - I_h)(f \phi_h) \|_{L^2(\Ghsm)} &\lesssim \| f \|_{W_h^{k+1,\infty}(\Ghsm)} h \| \phi_h \|_{H^{1}(\Ghsm)} ,\\
		\| (1 - I_h)(v_h w_h) \|_{L^2(\Ghsm)} &\lesssim  h^2 \| v_h \|_{W^{1,\infty}(\Ghsm)} \| w_h \|_{H^{1}(\Ghsm)} ,\\
		\| \nabla_{\Ghsm}(1 - I_h)(v_h w_h) \|_{L^2(\Ghsm)} &\lesssim  h \| v_h \|_{W^{1,\infty}(\Ghsm)} \| w_h \|_{H^{1}(\Ghsm)} . 
	\end{align*}
\end{lemma}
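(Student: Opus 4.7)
The plan is to reduce each estimate to the flat reference element $K_{\rm f}^0$ via the polynomial parametrization $F_K$, and then to exploit the polynomial structure of $\phi_h, v_h, w_h$ through Leibniz's rule combined with the inverse inequality for polynomials. On a flat element the classical Bramble--Hilbert-type bounds give $\|(1-I_h)g\|_{L^2(K_{\rm f}^0)} \lesssim h^{k+1}|g|_{H^{k+1}(K_{\rm f}^0)}$ and $\|\nabla (1-I_h)g\|_{L^2(K_{\rm f}^0)} \lesssim h^{k}|g|_{H^{k+1}(K_{\rm f}^0)}$, so the real task is to control $|g|_{H^{k+1}}$ when $g$ is either a product of a piecewise smooth function and a degree-$k$ polynomial, or a product of two such polynomials.

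For the first two estimates (with $g = f\phi_h$), Leibniz gives $D^{k+1}(f\phi_h) = \sum_{j=0}^{k+1}\binom{k+1}{j}D^j f\, D^{k+1-j}\phi_h$, and the term $j = 0$ vanishes because $\phi_h$ has degree $k$. For $j\geq 1$ the polynomial inverse inequality yields $\|D^{k+1-j}\phi_h\|_{L^2(K_{\rm f}^0)} \lesssim h^{j-k}\|\phi_h\|_{L^2(K_{\rm f}^0)}$ and also $\lesssim h^{j-k}\|\phi_h\|_{H^1(K_{\rm f}^0)}$; the worst term is $j = 1$, giving $|f\phi_h|_{H^{k+1}} \lesssim h^{-k}\|f\|_{W^{k+1,\infty}}\|\phi_h\|_{L^2}$ respectively $\lesssim h^{-(k-1)}\|f\|_{W^{k+1,\infty}}\|\phi_h\|_{H^1}$, and multiplication by $h^{k+1}$ or $h^k$ produces the stated factor $h$. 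For estimates three and four (with $g = v_h w_h$, both degree-$k$ polynomials), both endpoints in the Leibniz sum vanish, so only $1 \leq j \leq k$ contribute; the two inverse inequalities $\|D^j v_h\|_{L^\infty} \lesssim h^{-(j-1)}\|v_h\|_{W^{1,\infty}}$ and $\|D^{k+1-j}w_h\|_{L^2} \lesssim h^{-(k-j)}\|w_h\|_{H^1}$ multiply to $h^{-(k-1)}\|v_h\|_{W^{1,\infty}}\|w_h\|_{H^1}$, independently of $j$, which pairs with $h^{k+1}$ and $h^k$ to give the claimed $h^2$ and $h$.

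The passage from the flat element $K_{\rm f}^0$ back to the curved element $K \subset \Ghsm$ is carried out elementwise through the chain rule for $F_K$; since $F_K$ is itself a polynomial of degree $k$ whose relevant norms are controlled by the shape regularity constants $\km, \ksm$ (and the implicit constants in $\lesssim$ are allowed to depend on these by the paper's convention), this transfer is routine. The only genuinely nontrivial ingredient is the vanishing identity $D^{k+1}P = 0$ for any polynomial $P$ of degree $k$, which is what produces the ``super'' gain of an extra power of $h$ over generic interpolation; the main obstacle is simply the careful bookkeeping of Leibniz terms and inverse-inequality constants through the chain rule on the curved elements, rather than any analytic subtlety.
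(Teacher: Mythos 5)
Your argument — elementwise Bramble--Hilbert bound, Leibniz expansion of $D^{k+1}$ of the product, annihilation of the top-order term by $D^{k+1}\phi_h=0$ (or of both endpoints for $v_hw_h$), polynomial inverse inequalities for the remaining terms, and a chain-rule transfer back to the curved element — is the standard super-approximation proof and matches the approach underlying the references the paper cites for this lemma (\cite{BL24FOCM}, \cite{GLW22}); the paper itself does not reproduce a proof. One minor slip worth correcting: the inverse inequality should read $\|D^{k+1-j}\phi_h\|_{L^2(K_{\rm f}^0)} \lesssim h^{-(k+1-j)}\|\phi_h\|_{L^2(K_{\rm f}^0)} = h^{j-k-1}\|\phi_h\|_{L^2(K_{\rm f}^0)}$ rather than $h^{j-k}$; your stated conclusion $|f\phi_h|_{H^{k+1}} \lesssim h^{-k}\|f\|_{W^{k+1,\infty}}\|\phi_h\|_{L^2}$ at $j=1$ already uses the correct exponent, so this is a typo rather than a gap.
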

\begin{lemma}\label{lemma:super_conv2}
	Let $f$ be a function which is smooth on every element $K$ of $\Ghsm$, and assume that the pull-back function $f\circ F_K $ vanishes at all the Gauss--Lobatto points of the flat segment $K_{\rm f}^0$ for every element $K$ of $\Ghsm$. Then the following estimate holds: 
	\begin{align}
		\Big|\int_\Ghsm f \d\xi \Big| 
		\lesssim h^{2k} \| f \|_{W^{2k,1}_h(\Ghsm)} ,
	\end{align}
	where $\|\cdot\|_{W^{2k,1}_h(\Ghsm)}$ denotes the piecewise $W^{2k,1}$ norm . 
\end{lemma}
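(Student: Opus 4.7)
The plan is to work element-by-element, pulling each integral back to the flat reference segment and exploiting the superconvergence of the $(k+1)$-point Gauss--Lobatto rule together with the vanishing hypothesis at the quadrature nodes. First, decompose $\int_\Ghsm f\,\d\xi = \sum_{K\subset\Ghsm}\int_K f\,\d\xi$. On each element $K$, change variables via the polynomial parametrization $F_K:K_{\rm f}^0\to K$ to obtain
\begin{equation*}
\int_K f\,\d\xi \;=\; \int_{K_{\rm f}^0} (f\circ F_K)\,|\nabla_{K_{\rm f}^0}F_K|\,\d s \;=:\; \int_{K_{\rm f}^0} g(s)\,\d s .
\end{equation*}
By hypothesis, $f\circ F_K$ vanishes at all Gauss--Lobatto points of $K_{\rm f}^0$, so $g$ does as well, and therefore the $(k+1)$-point Gauss--Lobatto quadrature applied to $g$ is exactly zero, i.e.\ $Q_h^{GL}[g]=0$.

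Next, I would invoke the superconvergence of Gauss--Lobatto quadrature: the $(k+1)$-point rule is exact on polynomials of degree at most $2k-1$. The Peano kernel representation of the quadrature error then yields
\begin{equation*}
\Big|\int_{K_{\rm f}^0}g\,\d s - Q_h^{GL}[g]\Big| \;\lesssim\; |K_{\rm f}^0|^{2k}\,\|g^{(2k)}\|_{L^1(K_{\rm f}^0)} \;\lesssim\; h^{2k}\,\|g^{(2k)}\|_{L^1(K_{\rm f}^0)} .
\end{equation*}
Since $Q_h^{GL}[g]=0$, this immediately gives $|\int_{K_{\rm f}^0}g\,\d s|\lesssim h^{2k}\|g^{(2k)}\|_{L^1(K_{\rm f}^0)}$ at the reference level.

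It remains to translate the right-hand side back to the intrinsic piecewise $W^{2k,1}$ norm of $f$ on $K$. The Jacobian $|\nabla_{K_{\rm f}^0}F_K|$ is smooth, and $F_K$ is a polynomial of degree $k$, so all derivatives of $F_K$ of order strictly greater than $k$ vanish while those of order $\le k$ are uniformly bounded by the shape-regularity constant $\km$ (see \eqref{P} and the $L^\infty$ bound on $F_K$). Applying the Faà di Bruno formula expresses $g^{(2k)}$ as a finite linear combination of terms of the form $\big((\partial_s^j f)\circ F_K\big)\cdot (\text{polynomial in derivatives of }F_K)$ with $1\le j\le 2k$; converting the $L^1(K_{\rm f}^0)$-norm into an $L^1(K)$-norm by inserting $|\nabla_{K_{\rm f}^0}F_K|/|\nabla_{K_{\rm f}^0}F_K|$ and using that arc-length derivatives on $K$ are bounded in terms of $s$-derivatives on $K_{\rm f}^0$ divided by powers of $|F_K'|$, one obtains $\|g^{(2k)}\|_{L^1(K_{\rm f}^0)}\lesssim \|f\|_{W^{2k,1}(K)}$, with an implicit constant depending only on $\km$. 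Summing over $K\subset\Ghsm$ produces the claimed estimate.

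The quadrature step is standard; the only nontrivial bookkeeping is the Faà di Bruno expansion in the final step, and the main potential obstacle is ensuring that all constants there depend only on the shape regularity $\km$ rather than on unbounded higher-order derivatives of $F_K$ --- this is guaranteed precisely because $F_K$ is a polynomial of degree $k$, so its derivatives of order greater than $k$ are identically zero and no extra regularity of the mesh map is required.
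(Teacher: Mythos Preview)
The paper does not supply its own proof; the lemma is listed in Appendix~\ref{sec:super} as a standard result drawn from \cite[Section~3.5]{BL24FOCM} and \cite[Lemma~3.6]{GLW22}. Your argument---pull back elementwise to $K_{\rm f}^0$, note that the $(k{+}1)$-point Gauss--Lobatto rule is exact on $\mathbb P^{2k-1}$ so its Peano remainder is $O(h^{2k})\|g^{(2k)}\|_{L^1}$, and use the vanishing hypothesis to kill the quadrature term---is precisely the standard route and is correct in substance.

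One point worth tightening in the final step. You assert that derivatives of $F_K$ of order $\le k$ are uniformly bounded by $\km$, citing \eqref{P}; but that definition only controls $\|\hat X_{h,*}^m\|_{W^{k-1,\infty}_h}$, not the $k$th derivative. The top-order derivative $F_K^{(k)}$ is bounded only through the inverse inequality, as $O(h^{-1}\km)$, and in the Fa\`a di Bruno expansion of $g^{(2k)}$ up to two such factors can occur (e.g.\ through the term $\tilde f''\,(\sigma^{(k)})^2$ with $\sigma'=|F_K'|$). If $\|f\|_{W^{2k,1}_h(\Ghsm)}$ is read as the intrinsic arc-length Sobolev norm, this costs up to two powers of $h$ in the conversion $\|g^{(2k)}\|_{L^1(K_{\rm f}^0)}\lesssim\|f\|_{W^{2k,1}(K)}$. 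The cleanest resolution---and the one consistent with how the paper introduces piecewise Sobolev norms on $\Ghsm$ via the ``canonical smooth structure'' induced by $F_K$ in Section~\ref{section:notation}---is to read $\|f\|_{W^{2k,1}_h(\Ghsm)}$ as the pull-back norm on $\Gamma_{h,\rm f}^0$, in which case no chain-rule conversion is needed at all and the Peano bound is already the conclusion. In any event, the single place the paper invokes this lemma (the $J_{111}$ estimate in Section~\ref{sec:refined}) immediately follows the $W^{2k}$ norm with an inverse inequality down to $L^2$, so any such loss is absorbed there.
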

\begin{lemma}\label{lemma:T<=N2}
	For sufficiently small $h$, the following estimates hold:
	\begin{align} 
		\| I_h \Tbhsm \hat e_{h}^m \|_{L^2(\Ghsm)}
		&\lesssim
		h\| \hat e_{h}^m\cdot \bar n_{h,*}^m \|_{L_h^2(\Ghsm)}  , \\
		\|\hat e_{h}^m \|_{L^2(\Ghsm)}
		&\le
		2\| \hat e_{h}^m\cdot \bar n_{h,*}^m \|_{L_h^2(\Ghsm)}  . \label{eq:NT_stab_L22} 
	\end{align}
\end{lemma}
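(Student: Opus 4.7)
The key to this lemma is the \emph{nodal orthogonality structure} built into the projection error: by construction of the distance projection, $\ehm(p_j) = x_j^m - a^m(x_j^m)$ is exactly parallel to the true normal $\nsm(p_j)$ at every node $p_j$ of $\Ghsm$. Since the unit averaged normal $\bar n := \nbhsm/|\nbhsm|$ is $L^\infty$-close to $\nsm$, this forces the $\Tbhsm$-component of $\ehm$ to carry a factor equal to the normal discrepancy, which is precisely what produces the $h$-factor in the first estimate; the nodewise Pythagorean identity then returns $\ehm$ itself to its normal component and delivers the second estimate.

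First I would write the nodal decomposition $\ehm(p_j) = c_j\,\nsm(p_j)$ with $c_j := \ehm(p_j)\cdot\nsm(p_j)$ and record the pointwise identity
\begin{align*}
|\ehm(p_j)|^2 = |\Nbhsm \ehm(p_j)|^2 + |\Tbhsm \ehm(p_j)|^2,
\end{align*}
valid since $\Nbhsm$ and $\Tbhsm$ are orthogonal projections onto and away from the unit vector $\bar n(p_j)$. Substituting $\ehm = c_j\nsm$ into the tangential part gives $|\Tbhsm\ehm(p_j)|\le|c_j|\,|\bar n-\nsm|(p_j)$, and combining \eqref{normal-intpl}, Lemma \ref{lemma:n_bar_app} (together with the inverse inequality applied to the finite element difference $\nbhsm - I_h\nsm$) and the unit-length perturbation \eqref{eq:nhs_bar_len} yields $\|\bar n - \nsm\|_{L^\infty(\Ghsm)}\lesssim h^{k-1/2}\lesssim h$ for $k\ge 3$ and sufficiently small $h$.

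Next I would trade $|c_j|$ for $|\ehm\cdot\nbhsm(p_j)|$ via $\ehm\cdot\nbhsm(p_j) = c_j(\nsm\cdot\nbhsm)(p_j)$; since $\nsm\cdot\nbhsm = |\nbhsm|\bigl(1-\tfrac12|\bar n-\nsm|^2\bigr)$ is bounded below by $\tfrac12$ for small $h$, this gives $|c_j|\le 2|\ehm\cdot\nbhsm(p_j)|$. Combining the two estimates produces the nodewise inequality $|\Tbhsm\ehm(p_j)|\lesssim h\,|\ehm\cdot\nbhsm(p_j)|$, and summing against the Gauss--Lobatto weights followed by Lemma \ref{lemma:lump} for the norm equivalence proves the first inequality of the lemma.

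For the second inequality, the orthogonal decomposition of $\Nbhsm$ gives $|\Nbhsm\ehm(p_j)|^2 = |\ehm\cdot\nbhsm(p_j)|^2/|\nbhsm(p_j)|^2 \le (1+Ch^{2k})|\ehm\cdot\nbhsm(p_j)|^2$; combined with the tangential bound and the Pythagorean identity this yields
\begin{align*}
\|\ehm\|_{L^2_h(\Ghsm)}^2 \le (1+Ch)\|\ehm\cdot\nbhsm\|_{L^2_h(\Ghsm)}^2,
\end{align*}
after which Lemma \ref{lemma:lump} converts the left-hand side to the unlumped $L^2$ norm. The main delicate point is tracking the numerical constant: one needs the Gauss--Lobatto-induced ratio $\|f_h\|_{L^2(\Ghsm)}/\|f_h\|_{L^2_h(\Ghsm)}$ on polynomial spaces of degree $k$ to be uniformly bounded strictly below $2$ under the shape-regularity guaranteed by the induction hypothesis, so that multiplying by the $\sqrt{1+Ch}$ factor from the nodal perturbation still produces the constant $2$ for sufficiently small $h$; this is the subtlest step, the remaining ingredients being standard nodal computations.
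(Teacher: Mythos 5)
Your proof is correct and isolates the essential mechanism: by construction of the distance projection, $\ehm$ is exactly parallel to $\nsm$ at every Lagrange node, so the $\Tbhsm$-component of $\ehm$ inherits a factor of the nodal normal discrepancy $|\bar n - \nsm|\lesssim (1+\ksm)h^{k-1/2}$, which is $O(h)$ once $k\ge 2$ (your stated $k\ge 3$ is sufficient but not sharp for this particular step); the nodewise Pythagorean decomposition then returns the normal component and yields the second estimate. The paper itself does not prove this lemma --- it lists it in Appendix D among ``standard super-approximation results'' with citations to \cite{BL24FOCM} and \cite{GLW22} --- so there is no in-paper argument to compare against, but your route is the natural one within the projection-error framework used throughout.

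The point you flag as ``the subtlest step'' is in fact more benign than you suggest, and it is worth knowing why so that the constant $2$ is not left in doubt. On a flat reference segment, Legendre--Gauss--Lobatto quadrature with $k+1$ nodes \emph{over}-integrates the $L^2$ norm of degree-$k$ polynomials: expanding $f_h=\sum_{j=0}^{k}a_jL_j$ in Legendre polynomials gives $\|f_h\|_{L^2_h}^2=\|f_h\|_{L^2}^2+c_k\,a_k^2$ with $c_k>0$ (Canuto--Quarteroni), so the flat-element ratio $\|f_h\|_{L^2}/\|f_h\|_{L^2_h}$ is $\le 1$. Passing to curved elements multiplies both sides by the metric factor $|\nabla_{K_{\rm f}^0}F_K|$, which under the shape-regularity hypothesis equals a constant times $1+O(h)$ on each element, so the ratio $\|f_h\|_{L^2(\Ghsm)}/\|f_h\|_{L_h^2(\Ghsm)}$ is $\le 1+O(h)$; together with your nodal estimate this delivers $2$ with room to spare. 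One small inaccuracy worth correcting: the intermediate factor you write as $(1+Ch)$ is really $(1+Ch^2)$, since the tangential contribution in the Pythagorean identity is $O(h^2)$ and the normalization of $\nbhsm$ contributes only $O(h^{2k})$ by \eqref{eq:nhs_bar_len}; this is harmless, but it clarifies why no tight constant tracking is needed.
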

	
\section{Proof of Lemma \ref{lemma:NT_stab_ref}}
\label{sec:appndix_tan_stab}

	Given $f_h,g_h\in S_h(\Ghm)^2$, we derive from Lemma \ref{lemma:geo-perturb} and \ref{lemma:e-blinear} that
	\begin{align}\label{eq:tan_stab1}
		&\Big| \int_{\Gamma_h^m} \nabla_{\Gamma_h^m} I_h \Nbhm f_h \cdot  \nabla_{\Gamma_h^m}  I_h \Tbhm g_h \Big| 
		\notag\\ 
		&= \Big| \Big( \int_{\Gamma_h^m} \nabla_{\Gamma_h^m} I_h \Nbhm f_h \cdot  \nabla_{\Gamma_h^m}  I_h \Tbhm g_h
		- \int_{\Ghsm} \nabla_{\Ghsm} I_h \Nbhm f_h \cdot  \nabla_{\Ghsm}  I_h \Tbhm g_h \Big)
		\notag\\ 
		&\quad+ \Big( \int_{\Ghsm} \nabla_{\Ghsm} I_h \Nbhm f_h \cdot  \nabla_{\Ghsm}  I_h \Tbhm g_h
		- \int_{\Gm} \nabla_{\Gm} (I_h \Nbhm f_h)^l \cdot  \nabla_{\Gm}  (I_h \Tbhm g_h)^l \Big)
		\notag\\ 
		&\quad+ \int_{\Gm} \nabla_{\Gm} (I_h \Nbhm f_h)^l \cdot  \nabla_{\Gm}  (I_h \Tbhm g_h)^l \Big|
		\notag\\ 
		&\lesssim h^{-1/2}\big((1 + \ksm) h^{k+1} + \| \nabla_\Ghsm \ehm \|_{L^2(\Ghsm)} \big) 
		\notag\\
		&\qquad\qquad\qquad\qquad\qquad\qquad\times
		\| \nabla_{\Ghsm} I_h \Nbhm f_h \|_{L^2(\Ghsm)}  \|  \nabla_{\Ghsm}  I_h \Tbhm g_h \|_{L^2(\Ghsm)} 
		\notag\\ 
		&\quad +\Big| \int_{\Gm} \nabla_{\Gm} (I_h \Nbhm f_h)^l \cdot  \nabla_{\Gm} ( I_h \Tbhm g_h)^l \Big|
		,
	\end{align}
Besides,
	\begin{align}\label{eq:tan_stab212}
		&\Big| \int_{\Gm} \nabla_{\Gm} (I_h \Nbhm f_h)^l \cdot  \nabla_{\Gm}  (I_h \Tbhm  g_h)^l \Big|
		\notag\\ 
		&= \Big| \int_{\Gm} \nabla_{\Gm} (\Nbhm f_h)^l \cdot  \nabla_{\Gm} (\Tbhm g_h)^l
		\notag\\ 
		&\quad- \int_{\Gm} \nabla_{\Gm} ((1 - I_h)\Nbhm f_h)^l \cdot  \nabla_{\Gm} (I_h \Tbhm g_h)^l
		\notag\\ 
		&\quad- \int_{\Gm} \nabla_{\Gm} (I_h\Nbhm f_h)^l \cdot  \nabla_{\Gm} ((1 - I_h) \Tbhm g_h)^l
		\notag\\ 
		&\quad- \int_{\Gm} \nabla_{\Gm}  ((1 - I_h) \Nbhm f_h)^l \cdot  \nabla_{\Gm} ( (1 - I_h) \Tbhm  g_h)^l \Big|
		\notag\\ 
		&\lesssim \Big| \int_{\Gm} \nabla_{\Gm} (\Nbhm f_h)^l \cdot  \nabla_{\Gm} (\Tbhm g_h)^l \Big|
		\notag\\ 
		&\quad+ h \| f_h \|_{H^1(\Ghsm)}  \| \nabla I_h \Tbhm g_h \|_{L^2(\Ghsm)} \notag\\
		&\quad+ h \| \nabla_{\Ghsm} I_h\Nbhm f_h  \|_{L^2(\Ghsm)} \| g_h \|_{H^1(\Ghsm)} \notag\\
		&\quad
		+ h^2 \| f_h \|_{H^1(\Ghsm)} \|  g_h \|_{H^1(\Ghsm)}
		\notag\\
		&\lesssim \Big| \int_{\Gm} \nabla_{\Gm} (\Nbhm f_h)^l \cdot  \nabla_{\Gm} (\Tbhm g_h)^l \Big|
		\notag\\ 
		&\quad+ \| f_h \|_{L^2(\Ghsm)}  \| g_h \|_{H^1(\Ghsm)} ,
	\end{align}
	where we have used the super-approximation estimates (cf. Lemma \ref{lemma:super_conv}): 
	\begin{align}
		\begin{aligned}
			\| \nabla_{\Gm} ((1 - I_h)\Nbhm f_h)^l \|_{L^2(\Gm)}  &\lesssim h \|\nbhm\|_{W^{1,\infty}(\Ghsm)} \| f_h \|_{H^1(\Ghsm)} ,\\
			\| \nabla_{\Gm} ( (1 - I_h) \Tbhm g_h)^l \|_{L^2(\Gm)} & \lesssim h \|\nbhm\|_{W^{1,\infty}(\Ghsm)} \| g_h \|_{H^1(\Ghsm)} . \notag
		\end{aligned}
	\end{align}
	The boundedness of $\|\Nbhm\|_{W^{1,\infty}(\Ghsm)} $ and $\|\Tbhm\|_{W^{1,\infty}(\Ghsm)} $ follows from the $W^{1,\infty}$ estimate of $\nbhm$ in \eqref{eq:n-bbd}.

	The first term on the right-hand side of \eqref{eq:tan_stab212} can be further decomposed into 
	\begin{align}\label{eq:tan_stab3}
		& \Big| \int_{\Gm} \nabla_{\Gm} (\Nbhm f_h)^l \cdot  \nabla_{\Gm} (\Tbhm g_h)^l \Big|
		\notag\\ 
		&= \Big| \int_{\Gm} \nabla_{\Gm} (N_*^m\Nbhm f_h)^l \cdot  \nabla_{\Gm} (T_*^m\Tbhm g_h)^l
		\notag\\ 
		&\quad+ \int_{\Gm} \nabla_{\Gm} ((\bar N_h^{m} - N_*^m)\Nbhm f_h)^l \cdot  \nabla_{\Gm} (T_*^m \Tbhm g_h)^l 
		\notag\\ 
		&\quad+ \int_{\Gm} \nabla_{\Gm} (\bar N_h^{m}\Nbhm f_h)^l  \cdot  \nabla_{\Gm} ((\bar T_h^{m} - T_*^m)\Tbhm g_h)^l \Big|
		\notag\\ 
		&\lesssim \Big| \int_{\Gm} \nabla_{\Gm} (N_*^m\Nbhm  f_h)^l \cdot  \nabla_{\Gm} (T_*^m \Tbhm g_h)^l \Big|
		\notag\\ 
		&\quad+ \Big(\| \nabla_\Ghsm (\Nbhm - \Nsm) \|_{L^2(\Ghsm)} \|  \Nbhm f_h \|_{L^\infty(\Ghsm)} \notag\\
		&\quad+ \|  \Nbhm - \Nsm \|_{L^\infty(\Ghsm)} \| \nabla_\Ghsm \Nbhm f_h \|_{L^2(\Ghsm)} \Big) \|  \Tbhm  g_h \|_{H^1(\Ghsm)} \notag\\
		&\quad+ \Big(\min\big\{\| \nabla_\Ghsm ( \Tbhm - \Tsm) \|_{L^2(\Ghsm)} \|  \Tbhm g_h \|_{L^\infty(\Ghsm)}, 
		\notag\\
		&\qquad\qquad\qquad
		\| \nabla_\Ghsm ( \Tbhm - \Tsm) \|_{L^\infty(\Ghsm)} \|  \Tbhm g_h \|_{L^2(\Ghsm)}\big\} \notag\\
		&\quad+  \|  \Tbhm - \Tsm \|_{L^\infty(\Ghsm)} \| \nabla_\Ghsm \Tbhm g_h \|_{L^2(\Ghsm)} \Big) \|  \Nbhm f_h \|_{H^1(\Ghsm)} \notag\\
		&\hspace{120pt} \mbox{(product rule of differentiation is used)}\notag\\
		&\lesssim \Big| \int_{\Gm} \nabla_{\Gm} (N_*^m\Nbhm  f_h)^l \cdot  \nabla_{\Gm} (T_*^m \Tbhm g_h)^l \Big|
		\notag\\ 
		&\quad+ 
		((1+\ksm)h^{k-3/2} + h^{-3/2} \| \nabla_\Ghsm \ehm \|_{L^2(\Ghsm)}) 
		\|  f_h \|_{L^2(\Ghsm)}
		\|  g_h \|_{H^1(\Ghsm)} \notag\\
		&\quad+ 
		((1+\ksm)h^{k-3/2} + h^{-3/2} \| \nabla_\Ghsm \ehm \|_{L^2(\Ghsm)}) 
		\notag\\
		&\qquad
		\times
		\min\{ h^{-1/2} \| f_h \|_{L^2(\Ghsm)}
		\|  g_h \|_{L^\infty(\Ghsm)} ,
		h^{-1}\|  f_h \|_{L^2(\Ghsm)}
		\|  g_h \|_{L^2(\Ghsm)}
		\} 
		,
	\end{align} 
	where in the last inequality we have applied Lemma \ref{lemma:n_bar_app}.
	For the first term on the right-hand side of \eqref{eq:tan_stab3}, we proceed as follows:
	\begin{align*}
		& \Big| \int_{\Gm} \nabla_{\Gm} [N^m(\Nbhm f_h)^l ]\cdot  \nabla_{\Gm} [T^m(\Tbhm g_h)^l] \Big|
		\notag\\ 
		&= \Big| \int_{\Gm} (\nabla_{\Gm} N^m) N^m (\Nbhm f_h)^l \cdot  (\nabla_{\Gm} T^m) T^m (\Tbhm g_h)^l
		\notag\\ 
		&\quad+ \int_{\Gm} N^m \nabla_{\Gm} [N^m(\Nbhm f_h)^l] \cdot T^m \nabla_{\Gm} [T^m(\Tbhm g_h)^l]
		\notag\\ 
		&\quad+ \int_{\Gm} (\nabla_{\Gm} N^m) N^m (\Nbhm f_h)^l \cdot T^m  \nabla_{\Gm} [T^m(\Tbhm g_h)^l]
		\notag\\ 
		&\quad+ \int_{\Gm} N^m \nabla_{\Gm} [N^m(\Nbhm f_h)^l] \cdot  (\nabla_{\Gm} T^m) T^m (\Tbhm g_h)^l \Big| ,
	\end{align*}
	where the second term on the right-hand side is zero due to the orthogonality between the two projections $N^m$ and $T^m$. For the last term on the right-hand side, we can remove the gradient from $N^m(\Nbhm f_h)^l$ via integration by parts. This leads to the following estimate: 
	\begin{align}\label{eq:tan_stab51}
		&\Big| \int_{\Gm} \nabla_{\Gm} N^m(\Nbhm f_h)^l \cdot  \nabla_{\Gm} T^m(\Tbhm g_h)^l \Big| \notag\\ 
		&\lesssim \| \Nbhm f_h \|_{L^2(\Ghsm)}  \| \Tbhm g_h \|_{H^1(\Ghsm)}
		\notag\\
		&\lesssim \| f_h \|_{L^2(\Ghsm)}  \| g_h \|_{H^1(\Ghsm)} .
	\end{align}
	
	Then Lemma \ref{lemma:NT_stab_ref} follows from combining \eqref{eq:tan_stab1}--\eqref{eq:tan_stab51} and the induction hypothesis \eqref{eq:ind_hypo1}.

\section{Proof of Lemma \ref{lemma:e-convert}}
\label{sec:e-convert}

To show the stability of converting $\| \eM\cdot \nbhsm \|_{L^2_h(\Ghsm)}^2$ to $\| \ehM\cdot \nbhsM \|_{L^2_h(\hat\Gamma_{h,*}^{m+1})}^2$, we decompose their difference into the following three parts: 
\begin{align}
	&\| \ehM\cdot \nbhsM \|_{L^2_h(\hat\Gamma_{h,*}^{m+1})}^2 - \| \eM\cdot \nbhsm \|_{L^2_h(\Ghsm)}^2 \notag\\
	&= \| \ehM\cdot \bar n_{h,*}^{m+1} \|_{L^2_h(\hat\Gamma_{h,*}^{m+1})}^2 - \| \ehM\cdot \bar n_{h,*}^{m+1} \|_{L^2_h(\hat\Gamma_{h,*}^{m})}^2 
	&&\mbox{(change of $\hat\Gamma_{h,*}^{m+1}$ to $\hat\Gamma_{h,*}^{m}$)}\notag\\
	&\quad+ \| \ehM\cdot \bar n_{h,*}^{m+1} \|_{L^2_h(\hat\Gamma_{h,*}^{m})}^2  - \| \ehM\cdot \bar n_{h,*}^{m} \|_{L^2_h(\Ghsm)}^2 
	&&\mbox{(change of $\bar n_{h,*}^{m+1}$ to $\bar n_{h,*}^{m}$)} \notag\\
	&\quad+ \| \ehM\cdot \bar n_{h,*}^{m} \|_{L^2_h(\hat\Gamma_{h,*}^{m})}^2 - \| \eM\cdot \bar n_{h,*}^{m} \|_{L^2_h(\hat\Gamma_{h,*}^{m})}^2 
	&&\mbox{(change of $\ehM$ to $\eM$)} \notag\\
	&=: M_1^m + M_2^m + M_3^m .
	\notag
\end{align}
By the fundamental theorem of calculus, \eqref{eq:hat_X_s_diff1} and the norm equivalence of curves $\Ghsm$ and $\hat\Gamma_{h,*}^{m+1}$ in Section \ref{sec:err-est}, we know
\begin{align}\label{eq:M1}
	M_1^m
	&=  \| \ehM\cdot \bar n_{h,*}^{m+1} \|_{L^2_h(\hat\Gamma_{h,*}^{m+1})}^2 - \| \ehM\cdot \bar n_{h,*}^{m+1} \|_{L^2_h(\Ghsm)}^2  \notag\\
	&\lesssim \| \nabla_\Ghsm (\hat X_h^{m+1} - \hat X_h^m) \|_{L^\infty(\Ghsm)} \| \ehM \|_{L^2(\Ghsm)}^2 \notag\\
	&\lesssim \tau \| \ehM \|_{L^2(\Ghsm)}^2
	 \quad\mbox{(here \eqref{eq:hat_X_s_diff1} is used)}.
\end{align}
Eq. \eqref{eq:n-diff} implies that 
\begin{align}\label{eq:M2}
	M_2^m
	&=  \| \ehM\cdot \bar n_{h,*}^{m+1} \|_{L^2_h(\hat\Gamma_{h,*}^{m})}^2 - \| \ehM\cdot \bar n_{h,*}^{m} \|_{L^2_h(\Ghsm)}^2  \lesssim \tau \| \hat e_h^{m+1} \|_{L^2(\Ghsm)}^2 .
\end{align}
The term $M_3^m$ can be furthermore decomposed into several parts as follows: 
\begin{align}
	M_3^m 
	&= \| \ehM\cdot \bar n_{h,*}^{m} \|_{L^2_h(\Ghsm)}^2  - \| \eM\cdot \bar n_{h,*}^{m} \|_{L^2_h(\Ghsm)}^2 \notag\\
	&= \int_\Ghsm^h (\ehM - \eM)\cdot \bar n_{h,*}^{m}  (\ehM + \eM)\cdot \bar n_{h,*}^{m} \notag\\
	&= \int_\Ghsm^h \Big(I_h \TsM(\ehM - \eM) + f_h \Big) \cdot \hat n_{h,*}^{m}  (\ehM + \eM) \cdot \bar n_{h,*}^{m} \notag\\
	&= -\int_\Ghsm^h I_h (\TsM - \Tsm) \eM \cdot (\hat n_{h,*}^{m} -\nsm)  (\ehM + \eM) \cdot \bar n_{h,*}^{m} \notag\\
	&\quad -\int_\Ghsm^h I_h \Tsm (\eM - \ehm - \tau I_h \Tsm v^m) \cdot (\hat n_{h,*}^{m} -\nsm)  (\ehM + \eM) \cdot \bar n_{h,*}^{m} \notag\\
	&\quad - \int_\Ghsm^h \tau I_h \Tsm v^m \cdot (\hat n_{h,*}^{m} -\nsm)  (\ehM + \eM) \cdot \bar n_{h,*}^{m} \notag\\
	&\quad+ \int_\Ghsm^h I_h \TsM \eM \cdot (\nsM - \nsm)  (\ehM + \eM) \cdot \bar n_{h,*}^{m} \notag\\
	&\quad+ \int_\Ghsm^h f_h \cdot \hat n_{h,*}^{m}  (\ehM + \eM) \cdot \bar n_{h,*}^{m} \notag\\
	&=: \sum_{i=1}^5 M_{3i}^m .
	\notag
\end{align}
From \eqref{normal-intpl} and \eqref{nsM-nsm-nodes1}, we get
\begin{align}
	M_{31}^m &\lesssim (1 + \ksm) h^{k-1/2} \tau (\| \ehM \|_{L^2(\Ghsm)}^2 + \| \eM \|_{L^2(\Ghsm)}^2) , \notag\\
	M_{33}^m &\lesssim (1 + \kappa_{*,m}) h^k \tau (\| \ehM \|_{L^2(\Ghsm)} + \| \eM \|_{L^2(\Ghsm)}) , \notag\\
	M_{34}^m &\lesssim \tau (\| \ehM \|_{L^2(\Ghsm)}^2 + \| \eM \|_{L^2(\Ghsm)}^2) , \notag
\end{align}
and in view of \eqref{eq:vel_T}
\begin{align}
	M_{32}^m &\lesssim 
	\tau (1 + h^{-2} \| \nabla_\Ghsm \ehm \|_{L^2(\Ghsm)}) \big((\tau + (1 + \ksm) h^{k}) + \| \ehm \|_{H^1(\Ghsm)} \big)
	\notag\\
	&\quad\times
	(1 + \kappa_{*,m}) h^k (\| \ehM \|_{L^2(\Ghsm)} + \| \eM \|_{L^2(\Ghsm)}) . \notag
\end{align}
Finally, by using the geometric relation \eqref{eq:geo_rel_2} and \eqref{nsM-nsm-nodes1}, as well as the relation $(1 - \nsm(\nsm)^\top) \ehm =0$ at the nodes, we have
\begin{align*}
		M_{35}^m  
		&= \int_\Ghsm^h f_h\cdot \hat n_{h,*}^{m}  (\ehM + \eM)\cdot \bar n_{h,*}^{m} \notag\\
		&\lesssim \| (1 - \nsM(\nsM)^\top) \eM \|_{L_h^4(\Ghsm)}^2 (\| \ehM \|_{L^2(\Ghsm)} + \| \eM \|_{L^2(\Ghsm)}) \notag\\
		&\lesssim \| (1 - \nsm(\nsm)^\top) (\eM - \ehm) \|_{L_h^4(\Ghsm)}^2 (\| \ehM \|_{L^2(\Ghsm)} + \| \eM \|_{L^2(\Ghsm)}) \notag\\
		&\quad+ \| (\nsM(\nsM)^\top - \nsm(\nsm)^\top) \eM \|_{L_h^4(\Ghsm)}^2 (\| \ehM \|_{L^2(\Ghsm)} + \| \eM \|_{L^2(\Ghsm)}) \notag\\
		&\lesssim ( \| I_h \Tsm (\eM - \ehm - \tau I_h \Tsm v^m) \|_{L^4(\Ghsm)}^2 + \tau^2 ) (\| \ehM \|_{L^2(\Ghsm)} + \| \eM \|_{L^2(\Ghsm)}) \notag\\
		&\lesssim  \tau^2 (\| \ehM \|_{L^2(\Ghsm)} + \| \eM \|_{L^2(\Ghsm)})  ,
\end{align*}
where we have used \eqref{eq:de-bbd} in the last step. 
Obviously, $M_{33}^m$, $M_{34}^m$ and $M_{35}^m$ are leading order terms under the induction hypothesis, and by collecting the estimates of $M_{3j}^m$, $j=1,\dots,5$, we obtain the following estimate: 
\begin{align}\label{eq:M3}
		M_3^m 
		&\lesssim 
		\tau (\| \ehM \|_{L^2(\Ghsm)}^2 + \| \eM \|_{L^2(\Ghsm)}^2
		+
		\| \ehm \|_{L^2(\Ghsm)}^2
		)
		\notag\\
		&\quad
		+
		\big((1 + \kappa_{*,m}) h^k \tau + \tau^2 \big) (\| \ehM \|_{L^2(\Ghsm)} + \| \eM \|_{L^2(\Ghsm)}) 
		.
\end{align}
The proof is complete by combining \eqref{eq:M1}--\eqref{eq:M3}.

\end{appendices}

\renewcommand{\refname}{\bf References}

\bibliographystyle{abbrv}
\bibliography{MCF_Vt}

@article{BHL24,
	title={A convergent evolving finite element method with artificial tangential motion for surface evolution under a prescribed velocity field},
	author={Bai, Genming and Hu, Jiashun and Li, Buyang},
	journal={SIAM J. Numer. Anal.},
	volume={62},
	number={5},
	pages={2172--2195},
	year={2024},
	publisher={SIAM}
}

@article{BGV25b,
	author = {Genming Bai and Harald Garcke and Shravan Veerapeneni},
	title = "{A convergent finite element method for two-phase Stokes flow driven by surface tension.}",
	journal = {arXiv preprint arXiv:2509.20111},
	volume = {},
	number = {},
	pages = {},
	year = {2025},
}

@article{Bartels13,
	title={A simple scheme for the approximation of the elastic flow of inextensible curves},
	author={Bartels, S{\"o}ren},
	journal={IMA J. Numer. Anal.},
	volume={33},
	number={4},
	pages={1115--1125},
	year={2013},
	publisher={Oxford University Press}
}

@book{Brenner08,
	title={The {M}athematical {T}heory of {F}inite {E}lement {M}ethods},
	author={Brenner, Susanne C and L. Ridgway Scott},
	year={2008},
	publisher={Springer}
}

@article{BGN15a,
	title={A stable parametric finite element discretization of two-phase {N}avier--{S}tokes flow},
	author={Barrett, John W and Garcke, Harald and N{\"u}rnberg, Robert},
	journal={J. Sci. Comput.},
	volume={63},
	pages={78--117},
	year={2015},
	publisher={Springer}
}

@article{BGN15b,
	title={Stable finite element approximations of two-phase flow with soluble surfactant},
	author={Barrett, John W and Garcke, Harald and N{\"u}rnberg, Robert},
	journal={J. Comput. Phys.},
	volume={297},
	pages={530--564},
	year={2015},
	publisher={Elsevier}
}

@article{BGN16,
	title={A stable numerical method for the dynamics of fluidic membranes},
	author={Barrett, John W and Garcke, Harald and N{\"u}rnberg, Robert},
	journal={Numer. Math.},
	volume={134},
	pages={783--822},
	year={2016},
	publisher={Springer}
}

@article{DL24,
	title={New artificial tangential motions for parametric finite element approximation of surface evolution},
	author={Duan, Beiping and Li, Buyang},
	journal={SIAM J. Sci. Comput.},
	volume={46},
	number={1},
	pages={A587--A608},
	year={2024},
	publisher={SIAM}
}

@article{CS98,
	title={The local discontinuous {G}alerkin method for time-dependent convection-diffusion systems},
	author={Cockburn, Bernardo and Shu, Chi-Wang},
	journal={SIAM J. Numer. Anal.},
	volume={35},
	number={6},
	pages={2440--2463},
	year={1998},
	publisher={SIAM}
}

@article{JP86,
	title={An analysis of the discontinuous {G}alerkin method for a scalar hyperbolic equation},
	author={Johnson, Claes and Pitk{\"a}ranta, Juhani},
	journal={Math. Comp.},
	volume={46},
	number={173},
	pages={1--26},
	year={1986}
}

@article{Peterson91,
	title={A note on the convergence of the discontinuous {G}alerkin method for a scalar hyperbolic equation},
	author={Peterson, Todd E},
	journal={SIAM J. Numer. Anal.},
	volume={28},
	number={1},
	pages={133--140},
	year={1991},
	publisher={SIAM}
}

@book{GT2001,
  title = {{Elliptic Partial Differential Equations of Second Order}},
  author = {D. Gilbarg and N. S. Trudinger},
  series = {Classics in mathematics},
  publisher = {{Springer, Second Edition}},
  location = {},
  year = {2001}
}

@book{Lee18,
  title = {{I}ntroduction to {R}iemannian {M}anifolds},
  author = {J. Lee},
  series = {},
  publisher = {{Springer}},
  location = {},
  year = {2018}
}

@article{KMS24,
	title={Structure-preserving numerical methods for constrained gradient flows of planar closed curves with explicit tangential velocities},
	author={Kemmochi, Tomoya and Miyatake, Yuto and Sakakibara, Koya},
	journal={Japan J. Indust. Appl. Math.},
	pages={1--29},
	year={2024},
	publisher={Springer}
}

@article{MMNI16,
	title={A computational method for the coupled solution of reaction--diffusion equations on evolving domains and manifolds: Application to a model of cell migration and chemotaxis},
	author={MacDonald, G and Mackenzie, John A and Nolan, M and Insall, RH},
	journal={J. Comput. Phys.},
	volume={309},
	pages={207--226},
	year={2016},
	publisher={Elsevier}
}

@article{MRI21,
	title={A conservative finite element {ALE} scheme for mass-conservative reaction-diffusion equations on evolving two-dimensional domains},
	author={Mackenzie, John and Rowlatt, Christopher and Insall, Robert},
	journal={SIAM J. Sci. Comput.},
	volume={43},
	number={1},
	pages={B132--B166},
	year={2021},
	publisher={SIAM}
}

@article{Kovacs19,
	title={Computing arbitrary {L}agrangian {E}ulerian maps for evolving surfaces},
	author={Kov{\'a}cs, Bal{\'a}zs},
	journal={Numer. Methods Partial Differential Equations},
	volume={35},
	number={3},
	pages={1093--1112},
	year={2019},
	publisher={Wiley Online Library}
}

@article{ES12,
	title={An {ALE ESFEM} for solving PDEs on evolving surfaces},
	author={Elliott, Charles M and Styles, Vanessa},
	journal={Milan J. Math.},
	volume={80},
	number={2},
	pages={469--501},
	year={2012},
	publisher={Springer}
}

@article{EV15,
	title={Error analysis for an {ALE} evolving surface finite element method},
	author={Elliott, Charles M and Venkataraman, Chandrasekhar},
	journal={Numer. Methods Partial Differential Equations},
	volume={31},
	number={2},
	pages={459--499},
	year={2015},
	publisher={Wiley Online Library}
}

@article{Morigi10,
	title={Geometric surface evolution with tangential contribution},
	author={Morigi, Serena},
	journal={J. Comput. Appl. Math.},
	volume={233},
	number={5},
	pages={1277--1287},
	year={2010},
	publisher={Elsevier}
}

@article{TRM18,
	title={Computing minimal surfaces by mean curvature flow with area-oriented tangential redistribution},
	author={Tomek, Luk{\'a}{\v{s}} and Reme{\v{s}}{\i}kov{\'a}, Mariana and Mikula, Karol},
	journal={Acta Math. Univ. Comenianae},
	volume={87},
	number={1},
	pages={55--72},
	year={2018}
}

@article{GLW22,
author = {X. Gui and B. Li and J. Wang},
title = {{Convergence of renormalized finite element methods for heat flow of harmonic maps}},
journal = {SIAM J. Numer. Anal.},
volume = {60},
number = {},
pages = {312--338},
year = {2022},
}

@article{Ye-Cui-SINUM,
author = {Ye, Changqing and Cui, Junzhi},
title = {{Convergence of Dziuk's fully discrete linearly implicit scheme for curve shortening flow}},
journal = {SIAM J. Numer. Anal.},
volume = {59},
number = {},
pages = {2823--2842},
year = {2021},
}

@incollection{BGN_survey,
	AUTHOR={Barrett, J.W. and Garcke, H. and N{\"u}rnberg, R.},
	TITLE={Parametric finite element approximations of curvature driven interface evolutions},
	booktitle={Handbook of Numerical Analysis},
	volume={21},
	pages={275--423},
	year={2020},
}

@article{RWX23,
	title={Convergence of Arbitrary {L}agrangian-{E}ulerian Second-order Projection Method for the Stokes Equations on an Evolving Domain},
	author={Rao, Qiqi and Wang, Jilu and Xie, Yupei},
	journal={arXiv preprint arXiv:2310.08218},
	year={2023}
}

@book{Gia13,
	title={An {I}ntroduction to the {R}egularity {T}heory for {E}lliptic {S}ystems, {H}armonic {M}aps and {M}inimal {G}raphs},
	author={Giaquinta, Mariano and Martinazzi, Luca},
	year={2013},
	publisher={Springer Science \& Business Media}
}

@article{JSZ23,
	title={A convexity-preserving and perimeter-decreasing parametric finite element method for the area-preserving curve shortening flow},
	author={Jiang, Wei and Su, Chunmei and Zhang, Ganghui},
	journal={SIAM J. Numer. Anal.},
	volume={61},
	number={4},
	pages={1989--2010},
	year={2023},
	publisher={SIAM}
}

@article{JL21,
	title={A perimeter-decreasing and area-conserving algorithm for surface diffusion flow of curves},
	author={Jiang, Wei and Li, Buyang},
	journal={J. Comput. Phys.},
	volume={443},
	pages={110531},
	year={2021},
	publisher={Elsevier}
}

@article{KLL-Willmore,
  title = {{A convergent evolving finite element algorithm for Willmore flow of closed surfaces}},
  author = {Bal\'azs Kov\'acs and Buyang Li and Christian Lubich},
  volume = {149},
  number = {},
  pages = {595--643},
  journal = {Numer. Math.},
  year = {2021}
  }

@article{BL22A,
    author = {Genming Bai and Buyang Li},
    title = "{Erratum: Convergence of Dziuk's semidiscrete finite element method for mean curvature flow of closed surfaces with high-order finite elements}",
    journal = {SIAM J. Numer. Anal.},
    volume = {61},
    number = {3},
    pages = {1609-1612 },
    year = {2023},
}

@article{BL24FOCM,
	title={A new approach to the analysis of parametric finite element approximations to mean curvature flow},
	author={Bai, Genming and Li, Buyang},
	journal={Found. Comput. Math.},
	volume={24},
	number={5},
	pages={1673--1737},
	year={2024},
	publisher={Springer}
}

@article{BL24,
	title={Convergence of a stabilized parametric finite element method of the {B}arrett--{G}arcke--{N}{\"u}rnberg type for curve shortening flow},
	author={Bai, Genming and Li, Buyang},
	journal={Math. Comp.},
	volume={94},
	number={355},
	pages={2151--2220},
	year={2025}
}

@article{ER15,
	title={Evolving surface finite element method for the {C}ahn--{H}illiard equation},
	author={Elliott, Charles M and Ranner, Thomas},
	journal={Numer. Math.},
	volume={129},
	number={3},
	pages={483--534},
	year={2015},
	publisher={Springer}
}

@article{ER21,
	title={A unified theory for continuous-in-time evolving finite element space approximations to partial differential equations in evolving domains},
	author={Elliott, Charles M and Ranner, Thomas},
	journal={IMA J. Numer. Anal.},
	volume={41},
	number={3},
	pages={1696--1845},
	year={2021},
	publisher={Oxford University Press}
}

@article{NF99,
	title={A stability analysis for the arbitrary {L}agrangian {E}ulerian formulation with finite elements},
	author={Nobile, Fabio and Formaggia, Luca},
	journal={East-West J. Numer. Math.},
	volume={7},
	number={2},
	pages={105--132},
	year={1999}
}

@article{LS21,
	title={A novel arbitrary {L}agrangian--{E}ulerian finite element method for a parabolic/mixed parabolic moving interface problem},
	author={Lan, Rihui and Sun, Pengtao},
	journal={J. Comput. Appl. Math.},
	volume={383},
	pages={113125},
	year={2021},
	publisher={Elsevier}
}

@article{LXY23,
	title={Optimal convergence of arbitrary {L}agrangian--{E}ulerian iso-parametric finite element methods for parabolic equations in an evolving domain},
	author={Li, Buyang and Xia, Yinhua and Yang, Zongze},
	journal={IMA J. Numer. Anal.},
	volume={43},
	number={1},
	pages={501--534},
	year={2023},
	publisher={Oxford University Press}
}

@article{Fu20,
	title={Arbitrary {L}agrangian--{E}ulerian hybridizable discontinuous {G}alerkin methods for incompressible flow with moving boundaries and interfaces},
	author={Fu, Guosheng},
	journal={Comput. Methods Appl. Mech. Engrg.},
	volume={367},
	pages={113158},
	year={2020},
	publisher={Elsevier}
}

@book{Richter17,
	title={Fluid-{S}tructure {I}nteractions: {M}odels, {A}nalysis and {F}inite {E}lements},
	author={Richter, Thomas},
	volume={118},
	year={2017},
	publisher={Springer}
}

@article{DLY22,
	title={An energy diminishing arbitrary {L}agrangian--{E}ulerian finite element method for two-phase {N}avier--{S}tokes flow},
	author={Duan, Beiping and Li, Buyang and Yang, Zongze},
	journal={J. Comput. Phys.},
	volume={461},
	pages={111215},
	year={2022},
	publisher={Elsevier}
}

@article{VRBZ11,
	title={A fast algorithm for simulating vesicle flows in three dimensions},
	author={Veerapaneni, Shravan K and Rahimian, Abtin and Biros, George and Zorin, Denis},
	journal={J. Comput. Phys.},
	volume={230},
	number={14},
	pages={5610--5634},
	year={2011},
	publisher={Elsevier}
}

@article{Hysing09,
	title={Quantitative benchmark computations of two-dimensional bubble dynamics},
	author={Hysing, S and Turek, Stefan and Kuzmin, Dmitri and Parolini, Nicola and Burman, Erik and Ganesan, Sashikumaar and Tobiska, Lutz},
	journal={Internat. J. Numer. Methods Fluids},
	volume={60},
	number={11},
	pages={1259--1288},
	year={2009},
	publisher={Wiley Online Library}
}

@article{GLR24,
	title={Convergent evolving finite element approximations of boundary evolution under shape gradient flow},
	author={Gong, Wei and Li, Buyang and Rao, Qiqi},
	journal={IMA J. Numer. Anal.},
	volume={44},
	number={5},
	pages={2667--2697},
	year={2024},
	publisher={Oxford University Press}
}

@Article{Deckelnick-Dziuk-2009,
  Title                    = {Error analysis for the elastic flow of parametrized curves},
  Author                   = {Klaus Deckelnick and Gerhard Dziuk},
  Journal                  = {Math. Comp.},
  Year                     = {2009},
  Pages                    = {645--671},
  Number                 = {},
  Volume                   = {78},

}

@article {KLL17,
    author = {Bal\'azs Kov{\'a}cs and Buyang Li and Christian Lubich and Christian A. Power Guerra},
     title = {{Convergence of finite elements on an evolving surface driven by diffusion on the surface}},
   journal = {Numer. Math.},
    volume = {137},
      year = {2017},
    number = {},
     pages = {643--689},
 }

@article {KLL19,
    author = {Bal\'azs Kov{\'a}cs and Buyang Li and Christian Lubich},
     title = {{A convergent evolving finite element algorithm for mean curvature flow of closed surfaces}},
   journal = {Numer. Math.},
    volume = {143},
      year = {2019},
    number = {},
     pages = {797--853},
 }

@article {Li-2019,
    author = {Buyang Li},
     title = {Analyticity, maximal regularity and maximum-norm stability of semi-discrete finite element solutions of parabolic equations in nonconvex polyhedra},
   journal = {Math. Comp.},
    volume = {88},
      year = {2019},
    number = {},
     pages = {1--44},
      ISSN = {},
 }

@article{Bao-Zhao-2021-SINUM,
  	title={A structure-preserving parametric finite element method for surface diffusion},
  	author={Bao, Weizhu and Zhao, Quan},
  	journal={SIAM J. Numer. Anal.},
  	volume={59},
  	number={5},
  	pages={2775--2799},
  	year={2021},
  	publisher={SIAM}
  }

@article{Li-2020-SINUM,
author = {Buyang Li},
title = {{Convergence of Dziuk's linearly implicit parametric finite element method for curve shortening flow}},
journal = {SIAM J. Numer. Anal.},
volume = {58},
number = {},
pages = {2315--2333},
year = {2020},
}

@Article{Elliott-Fritz-2017,
  Title                    = {{On approximations of the curve shortening flow and of the mean
curvature flow based on the DeTurck trick}},
  Author                   = {Charles M. Elliott and Hans Fritz},
  volume={37},
  number={},
  pages={543--603},
  year={2017},
  journal={IMA J. Numer. Anal.},
}

@incollection{DeckelnickDziuk,
	AUTHOR = {Klaus Deckelnick and Gerhard Dziuk},
	TITLE = {On the approximation of the curve shortening flow},
	BOOKTITLE = {Calculus of variations, applications and computations ({P}ont-\`a-{M}ousson, 1994)},
	SERIES = {Pitman Res. Notes Math. Ser.},
	VOLUME = {326},
	PAGES = {100--108},
	PUBLISHER = {Longman Sci. Tech., Harlow},
	YEAR = {1995},
}

@article{DE2007IJNA,
  title = {Finite elements on evolving surfaces},
  author = {Gerhard Dziuk and Charles M. Elliott},
  volume = {27},
  number = {},
  pages = {262--292},
  journal = {IMA J. Numer. Anal.},
  year = {2007}
  }

@article{Bao-Jiang-Wang-Zhao-2017,
  title = {A parametric finite element method for solid-state dewetting problems with anisotropic surface energies},
  author = {Weizhu Bao and Wei Jiang and Yan Wang and Quan Zhao},
  volume = {330},
  number = {},
  pages = {380--400},
  journal = {J. Comput. Phys.},
  year = {2017}
  }

@article{BGN2007JCP,
  title = {A Parametric Finite Element Method for Fourth Order Geometric Evolution Equations},
  author = {John W. Barrett and Harald Garcke and Robert N\"urnberg},
  volume = {222},
  number = {},
  pages = {441--467},
  abstract = {We present a finite element approximation of motion by minus the Laplacian of curvature and related flows. The proposed scheme covers both the closed curve case, and the case of curves that are connected via triple junctions. On introducing a parametric finite element approximation, we prove stability bounds and compare our scheme with existing approaches. It turns out that the new scheme has very good properties with respect to area conservation and the equidistribution of mesh points. We state also an extension of our scheme to Willmore flow of curves and discuss possible further generalizations. \textcopyright{} 2006 Elsevier Inc. All rights reserved.},
  journal = {J. Comput. Phys.},
  year = {2007}
}

@article{BGN2008JCP,
  author = {John W. Barrett and Harald Garcke and Robert N\"urnberg},
  date = {2008-04},
  journaltitle = {J. Comput. Phys.},
  volume = {227},
  number = {},
  pages = {4281--4307},
  langid = {english},
  title = {On the Parametric Finite Element Approximation of Evolving Hypersurfaces in {$\mathbb{R}^3$}},
  journal = {J. Comput. Phys.},
  year = {2008}
}

@article{Deckelnick2005,
  title = {Computation of Geometric Partial Differential Equations and Mean Curvature Flow},
  author = {Deckelnick, Klaus and Dziuk, Gerhard and Elliott, Charles M.},
  date = {2005-05},
  journaltitle = {Acta Numer.},
  volume = {14},
  pages = {139--232},
  publisher = {{Cambridge University Press}},
  abstract = {This review concerns the computation of curvature-dependent interface motion governed by geometric partial differential equations. The canonical problem of mean curvature flow is that of finding a surface which evolves so that, at every point on the surface, the normal velocity is given by the mean curvature. In recent years the interest in geometric PDEs involving curvature has burgeoned. Examples of applications are, amongst others, the motion of grain boundaries in alloys, phase transitions and image processing. The methods of analysis, discretization and numerical analysis depend on how the surface is represented. The simplest approach is when the surface is a graph over a base domain. This is an example of a sharp interface approach which, in the general parametric approach, involves seeking a parametrization of the surface over a base surface, such as a sphere. On the other hand an interface can be represented implicitly as a level surface of a function, and this idea gives rise to the so-called level set method. Another implicit approach is the phase field method, which approximates the interface by a zero level set of a phase field satisfying a PDE depending on a new parameter. Each approach has its own advantages and disadvantages. In the article we describe the mathematical formulations of these approaches and their discretizations. Algorithms are set out for each approach, convergence results are given and are supported by computational results and numerous graphical figures. Besides mean curvature flow, the topics of anisotropy and the higher order geometric PDEs for Willmore flow and surface diffusion are covered.},
  langid = {english},
  journal = {Acta Numer.},
  year = {2005}
}

@article{Dem09,
  title = {Higher-Order Finite Element Methods and Pointwise Error Estimates for Elliptic Problems on Surfaces},
  author = {Demlow, Alan},
  date = {2009},
  journaltitle = {SIAM J. Numer. Anal.},
  volume = {47},
  number = {2},
  pages = {805--827},
  publisher = {{Society for Industrial and Applied Mathematics}},
  abstract = {We define higher-order analogues to the piecewise linear surface finite element method studied in [G. Dziuk, "Finite elements for the Beltrami operator on arbitrary surfaces," in Partial Differential Equations and Calculus of Variations, Springer-Verlag, Berlin, 1988, pp. 142\textendash 155] and prove error estimates in both pointwise and L{$_2$}-based norms. Using the Laplace\textemdash Beltrami problem on an implicitly defined surface {$\Gamma$} as a model PDE, we define Lagrange finite element methods of arbitrary degree on polynomial approximations to {$\Gamma$} which likewise are of arbitrary degree. Then we prove a priori error estimates in the L{$_2$}, H{$^1$}, and corresponding pointwise norms that demonstrate the interaction between the "PDE error" that arises from employing a finite-dimensional finite element space and the "geometric error" that results from approximating {$\Gamma$}. We also consider parametric finite element approximations that are defined on {$\Gamma$} and thus induce no geometric error. Computational examples confirm the sharpness of our error estimates.},
  journal = {SIAM J. Numer. Anal.},
  year = {2009}
}

@article{Dziuk1990a,
  title = {An Algorithm for Evolutionary Surfaces},
  author = {Dziuk, Gerhard},
  date = {1990-12-01},
  journaltitle = {Numer. Math.},
  volume = {58},
  number = {},
  pages = {603--611},
  abstract = {An Algorithm is presented which allows to split the calculation of the mean curvature flow of surfaces with or without boundary into a series of Poisson problems on a series of surfaces. This gives a new method to solve Plateau's problem forH-surfaces.},
  journal = {Numer. Math.},
  year = {1990}
}

@article{Dz13b,
  title = {Scalar Conservation Laws on Moving Hypersurfaces},
  author = {Dziuk, Gerhard and Kr\"oner, Dietmar and M\"uller, Thomas},
  date = {2013},
  journaltitle = {Interfaces Free Bound.},
  volume = {15},
  number = {2},
  pages = {203--236},
  abstract = {We consider conservation laws on moving hypersurfaces. In this work the velocity of the surface is prescribed. But one may think of the velocity to be given by PDEs in the bulk phase. We prove existence and uniqueness for a scalar conservation law on the moving surface. This is done via a parabolic regularization of the hyperbolic PDE. We then prove suitable estimates for the solution of the regularized PDE, that are independent of the regularization parameter. We introduce the concept of an entropy solution for a scalar conservation law on a moving hypersurface. We also present some numerical experiments. As in the Euclidean case we expect discontinuous solutions, in particular shocks. It turns out that in addition to the "Euclidean shocks" geometrically induced shocks may appear.},
  journal = {Interfaces Free Bound.},
  year = {2013}
}

@article{Dziuk2013b,
  title = {Finite Element Methods for Surface {{PDEs}}},
  author = {Gerhard Dziuk and Charles M. Elliott},
  date = {2013-05},
  journaltitle = {Acta Numer.},
  volume = {22},
  pages = {289--396},
  publisher = {{Cambridge University Press}},
  abstract = {In this article we consider finite element methods for approximating the solution of partial differential equations on surfaces. We focus on surface finite elements on triangulated surfaces, implicit surface methods using level set descriptions of the surface, unfitted finite element methods and diffuse interface methods. In order to formulate the methods we present the necessary geometric analysis and, in the context of evolving surfaces, the necessary transport formulae. A wide variety of equations and applications are covered. Some ideas of the numerical analysis are presented along with illustrative numerical examples.},
  langid = {english},
  journal = {Acta Numer.},
  year = {2013}
}

@article{Kov18,
  title = {High-Order Evolving Surface Finite Element Method for Parabolic Problems on Evolving Surfaces},
  author = {Kov\'acs, Bal\'azs},
  date = {2018-01-25},
  journaltitle = {IMA J. Numer. Anal.},
  volume = {38},
  number = {1},
  pages = {430--459},
  abstract = {High-order spatial discretizations and full discretizations of parabolic partial differential equations on evolving surfaces are studied. We prove convergence of the high-order evolving surface finite element method by showing high-order versions of geometric approximation errors and perturbation error estimates and by the careful error analysis of a modified Ritz map. Furthermore, convergence of full discretizations using backward difference formulae and implicit Runge\textendash Kutta methods are also shown.},
  journal = {IMA J. Numer. Anal.},
  year = {2018}
}

@article{Li21,
  title = {{Convergence of Dziuk's semidiscrete finite element method for mean curvature flow of closed surfaces with high-order finite elements}},
  author = {Li, Buyang},
  pages = {1592--1617},
  volume = {59},
  journal = {SIAM J. Numer. Anal.},
  year = {2021},
}

@book{Mantegazza2011,
  title = {Lecture Notes on Mean Curvature Flow},
  author = {Mantegazza, Carlo},
  date = {2011},
  series = {Progress in {{Mathematics}}},
  publisher = {{Birkh\"auser Basel}},
  abstract = {This book is an introduction to the subject of mean curvature flow of hypersurfaces with special emphasis on the analysis of singularities. This flow occurs in the description of the evolution of numerous physical models where the energy is given by the area of the interfaces. These notes provide a detailed discussion of the classical parametric approach (mainly developed by R. Hamilton and G. Huisken). They are well suited for a course at PhD/PostDoc level and can be useful for any researcher interested in a solid introduction to the technical issues of the field. All the proofs are carefully written, often simplified, and contain several comments. Moreover, the author revisited and organized a large amount of material scattered around in literature in the last 25 years.},
  isbn = {978-3-0348-0144-7},
  langid = {english},
  year = {2011}
}

\end{document}